\setlist[enumerate]{label=\arabic*.}
\newtheorem{thm}{Theorem}[section]
\newtheorem{Theorem}[thm]{Theorem}
\newtheorem{Lemma}[thm]{Lemma}
\newtheorem{Proposition}[thm]{Proposition}
\newtheorem{Corollary}[thm]{Corollary}
\newtheorem{claim}[thm]{Claim}
\theoremstyle{definition}
\newtheorem{Definition}[thm]{Definition}
\newtheorem{Remark}[thm]{Remark}
\newtheorem{step}{Step}
\newcommand{\B}{\mathbb{B}}
\newcommand{\F}{\mathbb{F}}
\newcommand{\G}{\mathbb{G}}
\newcommand{\N}{\mathbb{N}}
\renewcommand{\O}{\mathcal{O}}
\newcommand{\Q}{\mathbb{Q}}
\newcommand{\Z}{\mathbb{Z}}
\newcommand{\m}{\mathfrak{m}}
\newcommand{\coker}{\operatorname{coker}}
\newcommand{\Hom}{\operatorname{Hom}}
\newcommand{\Map}{\operatorname{Map}}
\newcommand{\HOM}{\underline{\mathrm{Hom}}}
\newcommand{\Perf}{\operatorname{Perf}}
\newcommand{\Spa}{\operatorname{Spa}}
\newcommand{\cts}{{\operatorname{cts}}}
\newcommand{\lc}{{\operatorname{lc}}}
\newcommand{\an}{\mathrm{an}}
\newcommand{\hotimes}{\hat{\otimes}}
\newcommand{\et}{{\mathrm{\acute{e}t}}}
\newcommand{\etqcqs}{{\mathrm{\acute{e}t}\text{-}\mathrm{qcqs}}}
\newcommand{\stdet}{\mathrm{std\acute{e}t}}
\newcommand{\HT}{\operatorname{HT}}
\newcommand{\aeq}{\stackrel{a}{=}}
\newcommand{\wh}{\widehat}
\newcommand{\bOx}{\overline{\O}^{\times}}
\newcommand{\Pic}{\operatorname{Pic}}
\newcommand{\uP}{\uPic}
\newcommand{\NS}{\mathrm{NS}}
\newcommand{\LSD}{\mathrm{LSD}}
\newcommand{\Spd}{\mathrm{Spd}}
\newcommand{\wt}{\widetilde}
\renewcommand{\diamond}{\diamondsuit}
\newcommand{\twolim}{2\text{-}\varinjlim}
\newcommand{\Char}{\operatorname{char}}
\renewcommand{\lim}{\varprojlim}
\newcommand{\cH}{{\ifmmode \check{H}\else{\v{C}ech}\fi}}
\newcommand{\tf}{[\tfrac{1}{p}]}
\renewcommand{\tt}{\mathrm{tt}}
\newcommand{\uPic}{\mathbf{Pic}}
\newcommand{\wtOm}{\wt\Omega}
\newcommand{\Oone}{\O^\times_1}
\newcommand*\isomarrow{%
	\xrightarrow{\raisebox{-0.35em}{\smash{\ensuremath{\sim}}}}
}  
\tikzset{
	labelrotate/.style={anchor=south, rotate=90, inner sep=.5mm}} 
\tikzset{
	labelrotatep/.style={anchor=north, rotate=90, inner sep=.75mm}}
\title{Diamantine Picard functors of rigid spaces}
\author{Ben Heuer}
\date{}
\address{University of Frankfurt,
	Robert-Mayer-Str. 6-8,
	60325 Frankfurt am Main, Germany}
\email{heuer@math.uni-frankfurt.de}
\begin{document}
\subjclass[2020]{14D20 (primary), 14G45, 14F20, 14G22  (secondary)}
\maketitle\thispagestyle{empty}
\begin{abstract}
	For a connected smooth proper rigid space $X$ over a perfectoid field extension of $\Q_p$, we show that the \'etale Picard functor of $X^\diamond$ defined on perfectoid test objects is the diamondification of the rigid analytic Picard functor. In particular, it is represented by a rigid analytic group variety if and only if the rigid analytic Picard functor is. 
	
	Second, we study the $v$-Picard functor  that parametrises line bundles in the finer $v$-topology on the diamond associated to $X$ and relate this to the rigid analytic Picard functor by a geometrisation of the multiplicative Hodge--Tate sequence.
	
	The motivation is an application to the $p$-adic Simpson correspondence, namely our results pave the way towards the first instance of a new moduli theoretic perspective.
\end{abstract}

\setcounter{tocdepth}{2}
\section{Introduction}
Line bundles are ubiquitous in rigid analytic geometry, and
Picard groups of rigid spaces have therefore been the subject of extensive studies, for example in \cite{Gerritzen-ZerlegungPicard,BL-stable-reduction-II,HartlLutk,Conrad_Ampleness,Lutkebohmert_RigidCurves,Kerz_towards_rigid_BQ,li2017rigid}. It is natural to ask how much of this theory carries over  to Scholze's larger categories like the pro-\'etale site \cite{Scholze_p-adicHodgeForRigid} or the category of diamonds \cite{etale-cohomology-of-diamonds}.
One particular question with a long history in rigid geometry is about the existence of rigid analytic moduli spaces of line bundles on proper rigid spaces, namely about representability of the rigid Picard functor which parametrises isomorphism classes of line bundles: 

 Let $p$ be a prime, let $K$ be a perfectoid extension of $\Q_p$ and let 
$\pi:X\to \Spa(K)$ be a smooth proper rigid space considered as an adic space. 
The rigid Picard functor is  the sheaf
\[\uPic_{X,\et}:=R^1\pi_{\et\ast}\G_m:\mathrm{SmRig}_{K,\et}\to \mathrm{Ab}\]
where $\mathrm{SmRig}_{K,\et}$ is the big \'etale site of smooth rigid spaces over $K$ and $\pi_{\et\ast}$ is the pushforward of big \'etale sites along $\pi$. 
Explicitly, $\uPic_{X,\et}$ is the sheafification of the functor sending a smooth rigid space $Y$ to the group of isomorphism classes of line bundles on $X\times Y$. It is expected that $\uPic_{X,\et}$ is always representable, and this is known in many cases of interest:

\begin{enumerate}
\item If $X$ is the analytification of a smooth proper algebraic variety $X_0$, then it follows from K\"opf's relative rigid GAGA-Theorem \cite{Koepf_GAGA}\cite[Theorem~2.8]{Lutkebohmer-FARAG}\cite[Example 3.2.6]{Conrad_Ampleness} that $\uPic_{X,\et}$ is the analytification of the algebraic Picard variety of $X_0$ \cite[\S1]{BL-stable-reduction-II}. In particular, the identity component $\uPic^0_{X,\et}$ is then an abelian variety.  
\item In \cite[\S6]{BL-Degenerating-AV}, Bosch--L\"utkebohmert treated the case that $X$ is an abeloid variety, i.e.\ a connected smooth proper rigid group variety: $\uPic^0_{X,\et}$ is then the dual abeloid variety.
\item Hartl--L\"utkebohmert \cite{HartlLutk} proved that if $X$ has a strict semi-stable formal model over a discrete valuation ring, then  $\uPic_{X,\et}$ is represented by a rigid group such that  $\uPic^0_{X,\et}$ is semi-abeloid, i.e.\ an extension of an abeloid variety by a torus.
\item Warner has announced in his thesis \cite{warner2017adic} a proof that $\uPic_{X,\et}$ is always representable, but he does not describe what $\uPic^0_{X,\et}$ looks like in general.
\end{enumerate}

\subsection{The \'etale diamantine Picard functor}

The goal of this article is to study Picard functors that are instead defined on perfectoid spaces as defined by Scholze \cite{perfectoid-spaces}. Viewing rigid spaces through perfectoid spaces naturally leads us into the setting of diamonds introduced in \cite{etale-cohomology-of-diamonds}:
Let $\Perf_{K,\et}$ be the site of perfectoid spaces over $X$ equipped with the \'etale topology, and $\Perf_{K,v}$ the same category with the much finer $v$-topology from \cite[\S8]{etale-cohomology-of-diamonds}.
Associated to $X$ we have the diamondification \[\pi^{\diamondsuit}:X^{\diamondsuit}\to \mathrm{Spd}(K)\]
defined in \cite[\S15]{etale-cohomology-of-diamonds}. This is a morphism of diamonds, and thus of sheaves on $\Perf_{K,v}$. Analogously to the rigid case, we define the \'etale diamantine Picard functor to be the sheaf
\[\uPic^{\diamondsuit}_{X,\et}:=R^1\pi^\diamond_{\et\ast}\G_m:\Perf_{K,\et}\to \mathrm{Ab}.\]
Explicitly, this is the \'etale sheafification of the functor that sends a perfectoid space $Y$ over $K$ to the set of isomorphism classes of line bundles on the analytic adic space $X\times Y$.

The first main result of this article is that this new diamantine Picard functor $\uPic^{\diamondsuit}_{X,\et}$ can be described in terms of the rigid analytic Picard functor $\uP_{X,\et}$. For this introduction, let us for simplicity assume that $\uP_{X,\et}$ is represented by a rigid space. Then we show:
\begin{Theorem}\label{t:representability-of-et-Picard-functor-intro}
	There is a natural isomorphism of sheaves on $\Perf_{K,\et}$
	\[(\uP_{X,\et})^\diamondsuit\isomarrow \uP^{\diamondsuit}_{X,\et},\]
	that is, $\uP^{\diamondsuit}_{X,\et}$ is represented by the diamondification of the rigid analytic Picard functor.
\end{Theorem}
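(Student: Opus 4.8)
The plan is to compare the two functors by testing both on affinoid perfectoid spaces $Y = \Spa(R, R^+)$ over $K$ and showing the natural map $(\uP_{X,\et})^\diamondsuit(Y) \to \uP^\diamondsuit_{X,\et}(Y)$ is an isomorphism after étale sheafification. The natural map itself comes from the universal property of diamondification together with the fact that a line bundle on $X \times Y'$ for $Y'$ a rigid space pulls back to a line bundle on $X \times (Y')^\diamondsuit$; concretely, if $\uP_{X,\et}$ is represented by a rigid space $P$, then a $Y$-point of $P^\diamondsuit$ is a map $Y \to P^\diamondsuit$, which by the description of diamondification of rigid spaces corresponds (étale-locally on $Y$) to a map of adic spaces from a rigid space $Y^\sharp$ (an untilt) to $P$, hence to a line bundle on $X \times Y^\sharp$; pulling back along $X \times Y \to X \times Y^\sharp$ (abusing notation, using that $Y$ lives over $Y^\sharp$ as adic spaces via the structure map of the diamond) gives the desired element of $\uP^\diamondsuit_{X,\et}(Y)$.

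First I would reduce to a statement about Picard groups of the individual products: since both sides are étale sheaves, it suffices to produce, for each affinoid perfectoid $Y = \Spa(R,R^+)$, a natural isomorphism between $\Pic(X \times Y)$ and the colimit over rigid-analytic "approximations" $Y_i = \Spa(R_i, R_i^+)$ of $Y$ (with $R = \widehat{\varinjlim} R_i$) of $\Pic(X \times Y_i)$ — i.e. a continuity statement $\Pic(X \times \Spa(\widehat{\varinjlim} R_i)) = \varinjlim \Pic(X \times \Spa(R_i))$. Here one uses that every affinoid perfectoid space over $K$ is such a completed filtered colimit of affinoid rigid spaces (by writing $R^+$ as a colimit of topologically finitely generated subalgebras and completing), and that $(\uP_{X,\et})^\diamondsuit$ evaluated on $Y$ is, after sheafification, exactly this colimit $\varinjlim P(Y_i) = \varinjlim \Pic(X \times Y_i)$, by representability of $P$ and the description of $P^\diamondsuit$.

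The key input for the continuity statement would be a comparison between the coherent cohomology of $X \times Y$ and that of the $X \times Y_i$: by the projection formula and flat base change along $Y_i \to \Spa(K)$, one has $H^0(X \times Y_i, \O) = H^0(X, \O) \hotimes_K R_i$ and similarly for $H^1$, and passing to the colimit and completing (using properness of $X$, so that these cohomology groups are finite $K$-Banach spaces and the completed tensor product behaves well) gives $H^j(X \times Y, \O) = H^j(X,\O)\hotimes_K R$. Combined with the exponential / logarithm comparing $\O$ and $\G_m$ on the formal-disc neighborhood of the identity, and the fact that the Néron–Severi-type discrete part $\Pic / \Pic^0$ is insensitive to the perfectoid base extension (being controlled by $H^2(X,\Z(1))$-type invariants that do not change under the pro-étale localizations involved), this yields $\Pic(X \times Y) = \varinjlim \Pic(X \times Y_i)$. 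One must be slightly careful that $\Pic^0$ is an open subgroup whose formal completion is governed by $H^1(X,\O)$, and invoke representability of $\uP_{X,\et}$ by a rigid group to know $\Pic^0(X\times Y_i) = \uP^0_{X,\et}(Y_i)$ behaves like a rigid-analytic group's points, so the colimit is computed as the $Y$-points of the diamondified group.

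The main obstacle I anticipate is the continuity/base-change statement for coherent cohomology along a non-Noetherian completed colimit: one is base-changing a proper rigid space along $\Spa(K) \leftarrow \Spa(R,R^+)$ where $R$ is perfectoid, and needs $R\Gamma(X \times Y, \O) \simeq R\Gamma(X,\O)\hotimes_K R$ to hold with the completed tensor product, plus the fact that completion commutes with the relevant filtered colimit up to the ambiguity that is killed by étale sheafification. This requires either a careful direct argument using a finite affinoid cover of $X$ and the Čech complex (which is a bounded complex of orthonormalizable Banach $K$-modules, so $\hotimes_K R$ is exact and commutes with the cohomology), or citing a relative GAGA / base-change result in the rigid-perfectoid setting. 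Once this is in hand, the identification of the sheafification of $Y \mapsto \varinjlim \Pic(X\times Y_i)$ with $(\uP_{X,\et})^\diamondsuit$ is formal from the definition of diamondification of a rigid variety, and the theorem follows.
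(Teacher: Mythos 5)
Your overall shape --- reduce to a continuity statement along rigid approximations $Y \approx \varprojlim Y_i$ of an affinoid perfectoid $Y$, and use properness of $X$ plus a \v{C}ech argument with Banach modules to get $H^j_{\et}(X\times Y,\O)=H^j_{\et}(X,\O)\hotimes_K \O(Y)$ --- does match one of the paper's ingredients (Proposition~\ref{p:affinoid-perfectoid-primitive-comparison}). But the reduction of the $\G_m$-statement to this coherent input is where your proposal has a genuine gap. The continuity you aim for cannot be attacked head-on: as noted in Remark~\ref{rm:XxY-rep}, approximation along $Y\approx\varprojlim Y_i$ already fails for $H^0$ of $\O^+$ and of $\O^\times$, so one cannot hope to prove $\Pic(X\times Y)=\varinjlim\Pic(X\times Y_i)$ (even up to the ambiguity killed by sheafification) by treating $\G_m$ as a single object. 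Your proposed d\'evissage --- exponential/logarithm ``on a formal disc around $1$'' for the identity component, plus the assertion that the N\'eron--Severi part is insensitive to the perfectoid base --- does not fill this. The exponential only identifies a small neighbourhood of $1$ with a small disc in $\G_a$, so it says nothing about torsion line bundles (the kernel $\mu_{p^\infty}$ of $\log$) nor about units and cocycles that are not close to $1$; and the claim that the discrete part is controlled by $H^2(X,\Z(1))$-type invariants that ``do not change'' is itself a base-change statement for torsion coefficients on $X\times Y$ that must be proved (the paper obtains it from \cite[Proposition~14.9]{etale-cohomology-of-diamonds} via Corollaries~\ref{c:base-change-for-etale-sheaves} and~\ref{c:R^npi_Fp-comparison}), and even granted, it does not by itself control the remaining multiplicative piece.

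The paper's proof instead runs through a finer d\'evissage: $0\to U\to\O^\times\to\bOx\to0$ with $U=1+\m\O^+$, and then $0\to\mu_{p^\infty}\to U\xrightarrow{\log}\O\to0$. The $\O$-part is essentially your \v{C}ech/properness argument; the $\mu_{p^\infty}$- and $\Z/N$-parts are the torsion comparison just mentioned; and the genuinely hard piece, absent from your proposal, is the rigid approximation lemma for $\bOx$ and $\O^+/\varpi$ (Proposition~\ref{p:rigid-approxi-for-bOx}), proved by a delicate analysis of rational localisations and finite \'etale maps on $X\times Y$ with almost mathematics, together with the computation $\pi_\ast\bOx=\bOx$ and Leray/Cartan--Leray bookkeeping to reassemble $\Pic$ from the pieces. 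Without this ingredient or a substitute, your argument cannot see the part of a line bundle on $X\times Y$ living ``away from $1$'', so the claimed conclusion does not follow from the coherent comparison plus NS-invariance. Two smaller points: invoking representability of $\uP_{X,\et}$ is legitimate under the hypothesis of the stated theorem, but it only helps on the finite-level side and does nothing for the perfectoid side, which is where the work lies; and the description of a $Y$-point of $P^\diamond$ via an ``untilt $Y^\sharp$'' is off --- in characteristic $0$ over $K$ such a point is simply a morphism of adic spaces $Y\to P$ over $\Spa(K)$.
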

\begin{Corollary}
	If $X$ is connected and $x\in X(K)$ is a point, then for any perfectoid space $T$ over $K$, the isomorphism classes of line bundles on $X\times T$ that are trivial over $x\times T$ are in natural one-to-one correspondence with the morphisms of adic spaces $T\to \uP_{X,\et}$ over $K$.
\end{Corollary}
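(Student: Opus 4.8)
The plan is to deduce this directly from \cref{t:representability-of-et-Picard-functor-intro} combined with the classical rigidification of a relative Picard functor by a rational point. Fix a perfectoid space $T$ over $K$. Since $\uP_{X,\et}$ is by assumption an adic space over $K$ and $T$ is perfectoid, the universal property of diamondification identifies $(\uP_{X,\et})^\diamondsuit(T)$ naturally with the set of morphisms of adic spaces $T\to\uP_{X,\et}$ over $K$; together with \cref{t:representability-of-et-Picard-functor-intro} this rewrites the right-hand side of the desired correspondence as $\uP^\diamondsuit_{X,\et}(T)$. It therefore remains to construct a natural bijection between $\uP^\diamondsuit_{X,\et}(T)$ and the set of isomorphism classes of line bundles on $X\times T$ that are trivial along $x\times T$.

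For this I would run the usual argument. Write $\pi_T\colon X^\diamondsuit\times T\to T$ for the base change of $\pi^\diamondsuit$, so that $\uP^\diamondsuit_{X,\et}(T)=H^0(T_\et,R^1\pi_{T\ast}\G_m)$ essentially by definition of the pushforward on big sites (using $H^1_\et(X\times U,\G_m)=\Pic(X\times U)$). As $X$ is connected and carries the $K$-point $x$, it is geometrically connected, so properness yields $\pi_{T\ast}\O_{X\times T}=\O_T$, and hence $\pi_{T\ast}\G_m=\G_m$, for every such $T$; this is the only nonformal input, and I expect to obtain it from relative rigid GAGA over the affinoid charts of $T$ (equivalently, from the coherence ingredients already underlying \cref{t:representability-of-et-Picard-functor-intro}). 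The low-degree exact sequence of the Leray spectral sequence for $\pi_T$ and $\G_m$ on the \'etale site then reads
\[0\to \Pic(T)\to \Pic(X\times T)\to H^0(T_\et,R^1\pi_{T\ast}\G_m)\xrightarrow{d_2} H^2_\et(T,\G_m)\xrightarrow{\pi_T^\ast} H^2_\et(X\times T,\G_m),\]
and the section $x\times\id_T$ splits $\pi_T^\ast$ on all cohomology, so the last map is injective and therefore $d_2=0$. Thus $\Pic(X\times T)\to H^0(T_\et,R^1\pi_{T\ast}\G_m)$ is surjective with kernel $\pi_T^\ast\Pic(T)$; since $x^\ast$ splits $\pi_T^\ast$ on Picard groups, we get
\[\uP^\diamondsuit_{X,\et}(T)=H^0(T_\et,R^1\pi_{T\ast}\G_m)\cong \Pic(X\times T)/\pi_T^\ast\Pic(T)\cong \ker\big(x^\ast\colon \Pic(X\times T)\to\Pic(T)\big),\]
and the group on the right is precisely the set of isomorphism classes of line bundles on $X\times T$ whose pullback along $x\times T$ is trivial.

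Chaining the two identifications produces the asserted correspondence, and it is natural in $T$ because every step is: the diamondification identification and the Leray description are functorial, and the isomorphism of \cref{t:representability-of-et-Picard-functor-intro} is one of sheaves on $\Perf_{K,\et}$. The main point to watch — and the only real obstacle — is the control of cohomology over an arbitrary, in particular highly non-Noetherian, perfectoid base $T$: concretely the identity $\pi_{T\ast}\G_m=\G_m$ and the injectivity of $\pi_T^\ast$ on $H^2_\et(\,\cdot\,,\G_m)$, equivalently the statement that the pointed Picard presheaf $T\mapsto\ker(x^\ast)$ is already a sheaf on $\Perf_{K,\et}$ rather than merely separated. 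Once this is granted — which, as noted, should follow from the proper base change and relative GAGA inputs already in play for \cref{t:representability-of-et-Picard-functor-intro} — the remainder is formal.
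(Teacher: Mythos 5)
Your proof is correct and follows essentially the same route as the paper, which obtains this statement from \cref{t:representability-of-et-Picard-functor-intro} together with the section-splitting Leray argument (Lemma~\ref{l:explicit-description-of-R^1pi_{tau*}F}, exactly your $d_2=0$ step via the splitting induced by $x$) and the identity $\pi^{\diamondsuit}_{\et\ast}\G_m=\G_m$ for connected proper $X$ (Corollary~\ref{c:pi_astO^+}). The one input you defer, $\pi_{T\ast}\O_{X\times T}=\O_T$ over an arbitrary perfectoid base $T$, is indeed available from the machinery behind the theorem, though it is proved there by the completed-tensor/bounded-torsion argument of Proposition~\ref{p:affinoid-perfectoid-primitive-comparison} rather than by rigid GAGA, which does not apply over perfectoid charts.
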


In fact, we define more generally a natural ``diamondification of functors on $\mathrm{SmRig}_{K,\et}$'' such that the statement of \cref{t:representability-of-et-Picard-functor-intro} holds without requiring $\uP_{X,\et}$ to be representable: Explicitly, the result then says that for every perfectoid space $T$ and any line bundle $L$ on $X\times T$, one can \'etale-locally on $T$ find a rigid space $T_0$ and a morphism $T\to T_0$ such that $L$ descends to $X\times T_0$. This statement is not at all obvious, and it is false in general if we remove the assumption that $X$ is proper.

\subsection{The $v$-Picard functor}

In the diamantine setting, there is now also a second natural Picard functor 
\[\uPic^{\diamondsuit}_{X,v}:=R^1\pi^\diamond_{v\ast}\G_m:\Perf_{K,v}\to \mathrm{Ab},\]
given by the $v$-sheafification of the functor that sends $Y$ to the sheaf of isomorphism classes of $v$-line bundles on $X\times Y$. This difference in topology is more than just a technicality: We showed in  \cite[Theorem~1.3.2a]{heuer-v_lb_rigid} that there  are in general many more \mbox{$v$-line} bundles on $X$ than \'etale line bundles. In fact,  we showed that if $K$ is algebraically closed, the respective Picard groups of isomorphism classes fit into a ``multiplicative Hodge--Tate sequence'':
\[0\to \Pic_{\et}(X)\to \Pic_v(X)\to H^0(X,\wtOm^1)\to 0\]
where $\wtOm^1:=\Omega^1\{-1\}$ is a Tate twist.
Our second main result is that this short exact sequence can be geometrised to give a comparison between the \'etale and $v$-topological Picard functors:

\begin{Theorem}\label{t:representability-of-v-Picard-functor-intro}
		Let $X$ be a smooth proper rigid space over a perfectoid extension $K$ of $\Q_p$. Then
		the v-Picard functor fits into a natural exact sequence of abelian sheaves on $\Perf_{K,\et}$
		\[ 0\to \uP_{X,\et}^\diamondsuit\to \uP_{X,v}^\diamondsuit\to H^0(X,\wt\Omega^1_X)\otimes_K \G_a\to 0.\]
		In particular, $\uP_{X,v}^\diamondsuit$ is represented by a rigid group variety if and only if $\uP_{X,\et}$ is.
\end{Theorem}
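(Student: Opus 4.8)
The plan is to run the proof of the multiplicative Hodge--Tate sequence of \cite{heuer-v_lb_rigid} in families over a perfectoid base. Fix an affinoid perfectoid space $T$ over $K$, write $X_T:=X^\diamond\times T$, let $\pi_T\colon X_T\to T$ be the projection and $\mu\colon X_{T,v}\to X_{T,\et}$ the change-of-topology morphism. The first step is a relative Hodge--Tate computation: one shows $\mu_\ast\O^\times_{X_T}=\O^\times_{X_T}$ and constructs a natural isomorphism $R^1\mu_\ast\O^\times_{X_T}\cong\wt\Omega^1_{X_T/T}$ (the relative analogue of the sheaf computation underlying \cite{heuer-v_lb_rigid}), obtained from the absolute statement by base change along $T\to\Spa(K)$; here one uses that $X$ is smooth proper, so the relevant coherent cohomology is finite over $K$ and compatible with base change. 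Applying $R\pi_{T\ast}$, using $\pi_{T\ast}\wt\Omega^1_{X_T/T}=H^0(X,\wt\Omega^1_X)\otimes_K\O_T$ (proper base change for coherent cohomology of rigid spaces, again finite over $K$, so no completion is needed) and the Leray spectral sequence for $\mu$ and $\pi_{T,\et}$, one produces after sheafification over $\Perf_{K,\et}$ a natural map
\[\uP^\diamond_{X,v}\longrightarrow H^0(X,\wt\Omega^1_X)\otimes_K\G_a,\]
functorial in $X\to\Spa(K)$ and, on $K$-points, compatible with the absolute exact sequence. (In particular, being sandwiched between two étale sheaves, $\uP^\diamond_{X,v}$ restricts to an honest abelian sheaf on $\Perf_{K,\et}$.)

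\textbf{The kernel.} Next one identifies the kernel with $\uP^\diamond_{X,\et}$. A $v$-line bundle on $X_T$ killed by the above map is, by construction, étale-locally on $X_T$ an analytic line bundle; using properness of $X$ and that $R^1\pi_{T,\et\ast}$ of the relevant sheaf of units is the analytic Picard sheaf, it is already étale-locally \emph{on $T$} a line bundle on $X\times T$. By \cref{t:representability-of-et-Picard-functor-intro} and its proof, such line bundles descend étale-locally on $T$ to rigid spaces and are classified by $(\uP_{X,\et})^\diamond=\uP^\diamond_{X,\et}$. This gives exactness at $\uP^\diamond_{X,v}$, and injectivity of $\uP^\diamond_{X,\et}\hookrightarrow\uP^\diamond_{X,v}$ (which also just reflects that an analytic line bundle which is $v$-locally trivial is étale-locally trivial).

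\textbf{Surjectivity.} The main work, and the step I expect to be the principal obstacle, is surjectivity: given a section $\omega$ of $H^0(X,\wt\Omega^1_X)\otimes_K\G_a$ over $T$, one must produce, étale-locally on $T$, a $v$-line bundle on $X_T$ mapping to $\omega$. I would construct this via a relative rank-one $p$-adic Simpson correspondence over the perfectoid base $T$: send the Higgs line bundle $(\O_{X_T},\omega)$ --- with $\omega$ regarded as a relative Higgs field in $H^0(X_T,\wt\Omega^1_{X_T/T})$, integrability being vacuous in rank one --- to a $v$-line bundle, obtained by exponentiating $\omega$ against the Hodge--Tate period of a toric chart (equivalently, twisting the standard $\widehat{\O}^+$-lattice by $\exp$ of $\omega$ times the universal pro-étale coordinate; over the algebraically closed $K$ this uses a choice of compatible system of $p$-power roots of unity, accounting for the non-canonicity). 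In rank one this is essentially explicit, compatible with base change in $T$, and by naturality recovers the construction of \cite{heuer-v_lb_rigid} fibrewise, hence lands in the correct class and is inverse, at the level of the extension, to the map of the first step; in particular it furnishes a (non-canonical) splitting $\uP^\diamond_{X,v}\cong\uP^\diamond_{X,\et}\times\big(H^0(X,\wt\Omega^1_X)\otimes_K\G_a\big)^\diamond$ of diamonds. The obstacle is to make the Simpson-type functor work relatively over an arbitrary affinoid perfectoid base and to check compatibility with the absolute sequence; a more cohomological route, geometrising the splitting of the Hodge--Tate sequence used in \cite{heuer-v_lb_rigid}, carries the same content.

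\textbf{Representability.} Finally, assume $\uP_{X,\et}$ is a smooth rigid group variety. Then $\uP^\diamond_{X,\et}=(\uP_{X,\et})^\diamond$ and $H^0(X,\wt\Omega^1_X)\otimes_K\G_a\cong\G_a^{\oplus g}$, with $g=\dim_K H^0(X,\Omega^1_X)$, are diamondifications of smooth rigid group varieties. The exact sequence exhibits $\uP^\diamond_{X,v}$ as a $\uP^\diamond_{X,\et}$-torsor over $(\G_a^{\oplus g})^\diamond$ in the étale topology of $\Perf_{K,\et}$ --- trivial by the splitting above, or in any case representable since étale descent is effective for (diamonds of) rigid spaces. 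Hence $\uP^\diamond_{X,v}$ is the diamond of a smooth rigid space, and since multiplication and inversion are then morphisms of diamonds between diamondifications of smooth rigid spaces, full faithfulness of $(-)^\diamond$ on smooth rigid spaces over $K$ promotes $\uP^\diamond_{X,v}$ to (the diamond of) a smooth rigid group variety --- concretely, an extension of $\G_a^{\oplus g}$ by $\uP_{X,\et}$.
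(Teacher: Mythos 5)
Your overall architecture --- geometrise the multiplicative Hodge--Tate sequence of \cite{heuer-v_lb_rigid} in families over a perfectoid base $T$, identify the kernel via \cref{t:representability-of-et-Picard-functor-intro}, and produce sections by an exponential construction --- matches the paper's strategy in outline, but two of your steps conceal genuine problems. The first is the claim that $\mu_\ast\O^\times_{X_T}=\O^\times_{X_T}$, $R^1\mu_\ast\O^\times_{X_T}\cong\wtOm^1_{X_T/T}$ and $H^1_v(X\times T,\O)=H^1_v(X,\O)\otimes_K\O(T)$ are ``obtained from the absolute statement by base change''. This is where essentially all of the work lives and it cannot be discharged this way: $R^1\mu_\ast\O^\times$ is computed locally on $X$, where properness and finiteness of coherent cohomology are of no help, and $X\times T$ is not a rigid space, so the results of \cite{heuer-v_lb_rigid} do not apply to it. The paper devotes its technical core to exactly these points: a rigid approximation lemma for $\bOx$ and $\O^+/\varpi$ on products $X\times T$ (\cref{p:rigid-approxi-for-bOx}), the identification $H^n_v(X\times T,\O)\cong H^n_v(X,\O)\otimes_K\O(T)$ via the Primitive Comparison Theorem and a bounded-torsion argument for \v{C}ech complexes (\cref{p:affinoid-perfectoid-primitive-comparison}), and a comparison of \'etale and $v$-cohomology on such products (\cref{p:bOx-cohom-et-vs-v}). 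Note that the paper deliberately avoids computing $R\mu_\ast\O^\times$ on $X\times T$ altogether: it filters $\O^\times$ by $U=1+\m\O^+$ and $\bOx$ and pushes each graded piece all the way down to the base, precisely so that properness can be exploited throughout.

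The second problem is that your claimed global splitting $\uP^\diamond_{X,v}\cong\uP^\diamond_{X,\et}\times\bigl(H^0(X,\wtOm^1_X)\otimes_K\G_a\bigr)$ is false: the Remark following \cref{t:representability-of-v-Picard-functor} states that the sequence is never split over all of $\wtOm^1_X\otimes\G_a$ unless $H^0(X,\Omega^1_X)=0$. The obstruction is exactly that $\exp$ converges only on a bounded disc, so ``exponentiating $\omega$ against the Hodge--Tate period'' defines a section only over the open subgroup $\wtOm^{+}\otimes\G_a^+$ (part 3 of the theorem). Surjectivity of $\HT\log$ does survive --- the paper deduces it from the Cartan--Leray sequence of the universal pro-finite-\'etale cover together with the fact that $\log\colon U\to\O$ is surjective after \'etale sheafification and that $\pi_1(X,x)$ has finite free maximal torsionfree abelian pro-$p$ quotient --- but it is not induced by a group-theoretic section. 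Consequently your representability argument, which trivialises the torsor by the splitting (or appeals to descent along a trivialising cover that is a priori only by perfectoid, not rigid, spaces), also has a gap; the paper's \cref{l:Pic_et-is-v-sheaf} instead exhausts $\uP^\diamond_{X,v}$ by the preimages of the bounded subgroups $p^{-n}\wtOm^{+}\otimes\G_a^+$, splits the $n=0$ piece, and shows that the maps $[p^n]$ between these pieces are \'etale.
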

	One  consequence is that  $\uP_{X,\et}^\diamondsuit$ is in fact a $v$-sheaf, so we can regard $\uP_{X,\et}^\diamondsuit$ as an extension of $\uP_{X,\et}$ to a much larger class of test objects.
	Conversely, we show that if $\uPic_{X,\et}^\diamondsuit$ is represented by a rigid group, then this also represents $\uPic_{X,\et}$. As we explain in more detail below, this might open up new methods to study $\uPic_{X,\et}$ and its representability.

\subsection{Applications to non-abelian $p$-adic Hodge theory} \cref{t:representability-of-et-Picard-functor-intro} and \cref{t:representability-of-v-Picard-functor-intro} are of interest in their own right, but
our main motivation for studying diamantine Picard varieties is an application to the $p$-adic Simpson correspondence: 

The reason why perfectoid test objects appear naturally in this context is that in \cite{heuer-geometric-Simpson-Pic} we describe a class of ``topological torsion'' line bundles $L$ on $X$ characterised by the property that $L$ extends to a line bundle on the adic space $X\times \wh{\Z}$ such that the specialisation of $L$ at $n\in \Z\subseteq \wh{\Z}$ is isomorphic to $L^n$. We would like these to be precisely those line bundles which induce a homomorphism of adic groups $\wh{\Z}\to \uPic_X$. This is guaranteed by \cref{t:representability-of-et-Picard-functor-intro}.

\medskip

While this was our original motivation to study diamantine Picard functors, \cref{t:representability-of-v-Picard-functor-intro} exhibits a much deeper connection, namely it paves the way for a new understanding of the $p$-adic Simpson correspondence as a geometric comparison of moduli spaces:
 The short exact sequence in \cref{t:representability-of-v-Picard-functor-intro} gives a geometrisation of the equivalence between $v$-line bundles and Higgs line bundles of \cite[\S5]{heuer-v_lb_rigid}. Geometrically speaking, it says that there is a rigid analytic moduli space of $v$-line bundles $\uPic_{X,v}^\diamondsuit$ which is a  $\uPic_{X,\et}^\diamondsuit$-torsor over $\mathcal A:=H^0(X,\wtOm^1_X)\otimes_K \G_a$ in a natural way. As the same is true for the moduli space $\mathbf{Higgs}_1:=\uPic_{X,\et}^\diamondsuit\times\mathcal A$ of Higgs line bundles, we find that $\uPic_{X,v}^\diamondsuit$ is an \'etale twist of $\mathbf{Higgs}_1$ over $\mathcal A$. As we explain in detail in the sequel \cite{heuer-geometric-Simpson-Pic},  this leads to a geometric non-abelian Hodge correspondence in rank one. 
 
 The present article is therefore the first in a series on the geometrisation of the p-adic Simpson correspondence. Indeed, in \cite{HX}, we will show that when $X$ is a curve, the perspective that the moduli space of v-vector bundles is a twist of the moduli space of Higgs bundles generalises to objects of higher rank.

\subsection{The topological torsion Picard functor is representable}\label{s:tt-Pic-intro}
Given the known results about representability of the rigid Picard functor which we summarised above, it seems plausible that $\uPic_{X,\et}$ is always representable by a rigid group variety whose identity component is a semi-abeloid variety.

In order to illustrate how the perfectoid perspective can help understand the structure of Picard functors, let us already mention the following result from the sequel article \cite{heuer-geometric-Simpson-Pic}, saying that a topological torsion version of the rigid Picard functor is always representable:
Let $\wh\G_m$ be the subgroup of topologically $p$-torsion units, given by the open disc at 1 of radius 1. We then define the topologically $p$-torsion Picard functor of $X$ to be
\[\widehat{\uPic}_{X,\et}:=R^1\pi_{\et\ast}\wh\G_m:\mathrm{SmRig}_{K,\et}\to \mathrm{Ab}\]
Using a geometric $p$-adic Simpson correspondence in terms of ${\uPic}^{\diamondsuit}_{X,\et}$, we prove in \cite{heuer-geometric-Simpson-Pic} that
	$\widehat{\uPic}_{X,\et}$ is always representable by a finite disjoint union of analytic \mbox{$p$-divisible} groups in the sense of Fargues \cite[\S 1.6]{Fargues-groupes-analytiques}. As such, it is a subgroup of $\HOM(\pi_1(X),\wh\G_m)$ where $\pi_1(X)$ is the \'etale fundamental group of $X$. If $\uPic_{X,\et}$ is represented by a rigid group $G$, then this is the topological $p$-torsion subgroup of $G$ as defined by Fargues \cite[\S 1.6]{Fargues-groupes-analytiques}.

This produces some evidence that $\uP_{X,\et}$ is always representable by a rigid group whose connected component is a semi-abeloid variety: Namely, it imposes restrictions on what kind of rigid groups can appear as Picard varieties, which are consistent with this prediction.

\subsection*{Acknowledgements}
We thank Johannes Ansch\"utz, Urs Hartl, Lucas Mann, Alexander Petrov, Peter Scholze, Annette Werner, Daxin Xu and Bogdan Zavyalov for very useful conversations. 

We thank Lucas Gerth for pointing out several mistakes in an earlier version. Special thanks to the referee for many helpful comments which have greatly improved the article.

This work was funded by the Deutsche Forschungsgemeinschaft (DFG, German Research Foundation) under Germany's Excellence Strategy-- EXC-2047/1 -- 390685813. The author was supported by the DFG via the Leibniz-Preis of Peter Scholze.
\subsection*{Notation}
Let $K$ a complete non-archimedean field of residue characteristic $p$. Let $\O_K$ be the ring of integers of $K$ and let $K^+\subseteq \O_K$ be any ring of integral elements.
Let $\m\subseteq K^+$ be the ideal of topologically nilpotent elements and fix a pseudo-uniformiser $\varpi\in \m$.  We use adic spaces in the sense of Huber \cite{Huber-ageneralisation}. We abbreviate $\Spa(K,K^+)$ by $\Spa(K)$ when $K^+$ is clear from the context.

In this article, a rigid space over $K$ is by definition an adic space locally of topologically finite type over $\Spa(K,K^+)$.  In particular, we then denote by $\mathrm{SmRig}_{K}$ the category of smooth rigid spaces over $\Spa(K,K^+)$.  For any rigid space $X$ we denote by $\mathrm{SmRig}_{X}$ the slice category of rigid spaces over $X$. For rigid spaces $X$, $Y$ over $K$, we also write $Y_X:=Y\times_KX$. 
We denote by $\B^d$ the unit disc of dimension $d$ over $K$, considered as an affinoid rigid space.

 We are most interested in the case of $K^+=\O_K$, in which case the notion of rigid spaces is equivalent to the classical one under mild technical assumptions, but like in \cite{Scholze_p-adicHodgeForRigid} it is useful to consider the more general case since this makes it very easy to later pass to the relative situation of morphisms of rigid spaces $\pi:X\to Y$, which is useful for applications.

 We use perfectoid spaces in the sense of \cite{perfectoid-spaces} and denote by $\Perf_{K}$ the category of perfectoid spaces over $(K,K^+)$. We use diamonds in the sense of \cite{etale-cohomology-of-diamonds} and denote by $\mathrm{LSD}_{K}$ the category of locally spatial diamonds over $\Spd(K,K^+)$. For any $X\in \mathrm{LSD}_{K}$, we denote by $\mathrm{LSD}_{X}$ the slice category.

By a rigid group over $K$ we mean a group object in the category of rigid spaces over $K$, always assumed to be commutative. We denote by $\G_m$ the rigid multiplicative group, by $\G_a$ the rigid additive group, and by $\G_a^+\subseteq \G_a$ the subgroup given by the closed unit ball $\B^1$.

\section{The diamantine Picard functors}
Let $K$ be a perfectoid field over $\Q_p$ and let $\pi:X\to \Spa(K,K^+)$ be a smooth rigid space. 
The aim of this section is to introduce the diamantine Picard functors of $X$, and to state a more precise  and more general version of the main results, as well as some corollaries. We do not give any proofs yet, but we end the section with an overview of the strategy of proof.

\subsection{Definition of diamantine and rigid Picard functors}
Recall that the rigid analytic Picard functor as considered in \cite{HartlLutk} can be defined as the abelian sheaf
\[\uPic_{X,\et}:=R^1\pi_{\et\ast}\G_m:\mathrm{SmRig}_{K,\et}\to \mathrm{Ab}\]
where $\mathrm{SmRig}_{K,\et}$ is the site of smooth rigid spaces over $\Spa(K,K^+)$ with the \'etale topology and
where $\pi_{\et}:\mathrm{SmRig}_{X,\et}\to \mathrm{SmRig}_{K,\et}$ is the natural morphism of big \'etale sites. Conjecturally, if $X$ is proper, then $\uPic_{X,\et}$ is represented by a smooth rigid group variety. As summarised in the introduction, this is known in many cases, but not yet in full generality.

\medskip 

Our goal in this subsection is to introduce a ``diamantine'' variant of the Picard functor defined on perfectoid test objects, and to explain how this can be compared to $\uP_{X,\et}$.

Recall from \cite[\S15]{etale-cohomology-of-diamonds} that there is a fully faithful diamondification functor
\[ \mathrm{SmRig}_K\to\LSD_K ,\quad X\mapsto X^\diamond\]
sending a smooth rigid space $X$ to its associated locally spatial diamond over $\Spd(K,K^+)$. We sometimes drop $-^\diamondsuit$ from notation when this is clear from the context, for example we simply write $\G_m$ for the diamond that sends a perfectoid space $\Spa(R,R^+)$ to $R^\times$.
We write $\delta$ for the morphism of \'etale sites associated to the above functor (cf \cite[Lemma~15.6]{etale-cohomology-of-diamonds})
\[ \delta:\LSD_{K,\et}\to  \mathrm{SmRig}_{K,\et}.\]
We also need the analogous functor on the site of perfectoid spaces with the \'etale topology:
\[ \iota:\LSD_{K,\et}\to \Perf_{K,\et}.\]
Since for any adic space $Y$ over $K$ we have $Y_{\et}\cong Y^\diamondsuit_{\et}$, both $\delta_{\ast}$ and $\iota_{\ast}$ are exact functors.

For the definition of the diamantine Picard functor, we now consider the diamondification 
$\pi^{\diamondsuit}:X^{\diamondsuit}\to \mathrm{Spd}(K,K^+)$. Pullback along this map induces a natural morphism of sites
\[ \pi_{\et}^{\diamondsuit}:\LSD_{X,\et}\to \Perf_{K,\et}.\]

There is a second, much finer topology on perfectoid spaces and diamonds, namely the $v$-topology introduced by Scholze in \cite[\S8,\S14]{etale-cohomology-of-diamonds}. For this we get a morphism of sites
\[\pi_{v}^{\diamondsuit}:\LSD_{X,v}\to \Perf_{K,v}\]
 with the same underlying functor as $\pi_{\et}^{\diamondsuit}$, but finer topologies on either side.
\begin{Definition}
	The \'etale diamantine Picard functor of $X$ is defined to be the sheaf
	\[\uPic^{\diamondsuit}_{X,\et}:=R^1\pi^\diamond_{\et\ast}\G_m:\Perf_{K,\et}\to \mathrm{Ab}.\]
	In the diamantine setting, there is a second Picard functor defined using the finer v-topology:
	\[\uPic^{\diamondsuit}_{X,v}:=R^1\pi^\diamond_{v\ast}\G_m:\Perf_{K,v}\to \mathrm{Ab}.\]
\end{Definition}
\begin{Remark}
	The functor $\uPic^{\diamondsuit}_{X,\et}$ naturally extends to $\LSD_{X,\et}$ if we instead define $\pi^{\diamondsuit}_{\et}$ to be the morphism $\LSD_{X,\et}\to \LSD_{K,\et}$. We can then equivalently define $\uPic^{\diamondsuit}_{X,\et}:=\iota_{\ast}R^1\pi^{\diamondsuit}_{\et\ast}\G_m$. 
	The same works for $\uPic^{\diamondsuit}_{X,v}$, here the difference is less relevant as $\Perf_{K,v}$ is a basis of $\LSD_{K,v}$.
	
	That said, for our purposes it is important to restrict to perfectoid test objects: One reason is that $\G_m$ is easier to describe on adic spaces than on diamonds, another that relative $p$-adic Hodge theory is much simpler for morphisms $X\times Y\to Y$ over a perfectoid base $Y$.
\end{Remark}

The two functors $\uPic^{\diamondsuit}_{X,\et}$ and $\uPic^{\diamondsuit}_{X,v}$ are related via a natural morphism: One way to construct this is via the Leray sequences for the compositions in the commutative diagram
\[
\begin{tikzcd}
	{\text{LSD}_{X^\diamond,v}} \arrow[d] \arrow[r,"\pi^{\diamondsuit}_{v}"] \arrow[dotted,rd]& {\Perf_{K,v}} \arrow[d,"\nu"] \\
	{\text{LSD}_{X^\diamond,\et}} \arrow[r,"\pi^{\diamondsuit}_{\et}"]           & {\Perf_{K,\et}}.          
\end{tikzcd}\]
These induce a natural map
\[ \uP^{\diamondsuit}_{X,\et}\to \nu_{\ast}\uP^{\diamondsuit}_{X,v}.\]
Since both sides have the same underlying category $\Perf_K$, we shall in the following drop $\nu_{\ast}$ from notation, which amounts to forgetting that $\uP^{\diamondsuit}_{X,v}$ is already a sheaf for the $v$-topology.

\medskip

Second, the two functors $\uPic^{\diamondsuit}_{X,\et}$ and $\uPic^{\diamondsuit}_{X,v}$ are related to the rigid analytic Picard functor via a natural base-change map:
Consider the commutative diagram of big \'etale sites
\[
\begin{tikzcd}
	{\text{LSD}_{X^\diamond,\et}} \arrow[d] \arrow[r,"\delta"] & {\text{SmRig}_{X,\et}} \arrow[d,"\pi_{\et}"] \\
	{\text{LSD}_{K,\et}} \arrow[r,"\delta"]           & {\text{SmRig}_{K,\et}}.       
\end{tikzcd}\]

For the comparison, we wish to extend the diamondification functor from smooth rigid spaces to sheaves on $\text{SmRig}_{K,\et}$.
For any sheaf $\mathcal F$ on $\text{SmRig}_{K,\et}$, we therefore define
\[ \mathcal F^\diamondsuit:=\iota_{\ast}\delta^{-1}\mathcal F.\]
This notation makes sense since $\mathcal F^\diamondsuit$ agrees with $X^\diamond$ if $\mathcal F$ is represented by a smooth rigid space $X$. We note that   $-^\diamondsuit$ is exact because $\iota_{\ast}$ is. Consequently, the base change map for the above diagram induces 
for any $n\geq 0$ a natural morphism of sheaves on $\Perf_{K,\et}$
\begin{equation}\label{eq:base-change-from-rigid-to-diamond}
 (R^n\pi_{\et\ast}\mathcal F)^\diamondsuit\to R^n\pi^{\diamondsuit}_{\et\ast}\mathcal F^{\diamondsuit}.
\end{equation}
Applying this to $\mathcal F=\G_m$ and $n=1$, we find that there is a natural morphism 
\[ (\uP_{X,\et})^\diamondsuit\to \uP^{\diamondsuit}_{X,\et}\]
of sheaves on $\Perf_{K,\et}$.
It is clear from the construction that this is functorial in $X$.

\subsection{The Diamantine Picard Comparison Theorem}\label{s:diamantine-comparison-explained}
With the technical preparations of the last section, we can now formulate the most general version of our main result. As this can be interpreted as being ``relative $p$-adic Hodge theory for $\G_m$'', we first explain a related but simpler result, namely the case of $\G_a$.

\medskip

To simplify notation in the following without making additional choices, we begin by introducing notation for the usual Tate twist in $p$-adic Hodge theory:
\begin{Definition}
	For any smooth rigid space $X$ over $K$, we denote by $\wt \Omega^1_X:=\Omega^1_{X|K}\{-1\}$ the sheaf on $X_{\et}$ given by tensoring over $K$ with the Breuil--Kisin--Fargues twist $K\{-1\}$.  If $K$ contains all $p$-power unit roots, this twist is equivalent to the usual Tate twist $K(-1)$.
\end{Definition}
Assume from now on that $X$ is proper. As we will discuss,
following Scholze's approach to the Hodge--Tate sequence of $p$-adic Hodge theory \cite{Scholze_p-adicHodgeForRigid}\cite[\S3]{Scholze2012Survey}, one has in general:
\begin{Proposition}\label{p:HT-ses-vs}
There is a natural short exact sequence
\[
0\to H^1_{\an}(X,\O)\to H^1_{v}(X,\O)\to H^0(X,\wtOm_X^1)\to 0.\]
\end{Proposition}
This is canonically isomorphic to the usual Hodge--Tate short exact sequence if $K$ is algebraically closed, via the Primitive Comparison Theorem:  $ H^1_{v}(X,\O)=H^1_{\et}(X,\Q_p)\otimes_{\Q_p}{K}$.

As we will show, due to various base-change results for coherent cohomology, we have a relative ``diamantine'' version of this short exact sequence. For the formulation, we need:
\begin{Definition}
	For any abelian sheaf $F$ on $\Perf_{K,\et}$, we define its tangent space as \[T_0F:=\Hom(\G_a^+,F)\otimes_{\Z_p}\Q_p\] where $\G_a^+$ is the closed unit  with its additive structure. Since $\Hom(\G_a^+,\G_a^+)=K^+$, this is always a $K$-vector space. When $F$ is represented by a rigid group variety $G$, this recovers the usual notion of tangent spaces
	by \cite[Theorem~3.4]{heuer-G-torsors-perfectoid-spaces}.
\end{Definition}
\begin{Proposition}\label{p:HTses-of-R^1pi-for-O}
		Let $(K,K^+)$ be a perfectoid field extension of $\Q_p$. Let $\pi:X\to \Spa(K,K^+)$ be any proper smooth rigid space. Then:
		\begin{enumerate}
			\item The natural map $(R^1\pi_{\et\ast}\O)^\diamondsuit\to R^1\pi^\diamondsuit_{\et\ast}\O$ from \cref{eq:base-change-from-rigid-to-diamond} is an isomorphism.
			\item There is a short exact sequence of sheaves on $\Perf_{K,\et}$, functorial in $X$,
			\begin{equation}\label{seq:HT-ses}
				0\to R^1\pi^\diamondsuit_{\et\ast}\O\to R^1\pi^\diamondsuit_{v\ast}\O\xrightarrow{\HT} H^0(X,\wt\Omega^1_X)\otimes_K\G_a\to 0
			\end{equation}
		which is canonically isomorphic to $-\otimes_K\G_a$ applied to the sequence in \cref{p:HT-ses-vs}. In particular, we recover this sequence by passing to tangent spaces.
		\end{enumerate}
\end{Proposition}
We note that if $K$ is algebraically closed, this shows that $R^1\pi^\diamondsuit_{v\ast}\O=\Hom(\pi_1(X),K)\otimes \G_a$.

We can now formulate a precise version of the main result of this article, which could be described as a version of \cref{p:HTses-of-R^1pi-for-O} for the multiplicative group $\G_m$. For the statement, let us fix $\alpha\in \Q$ with $\alpha>\frac{1}{p-1}$ and $|p|^\alpha\in |K|$.  We recall that the $p$-adic exponential converges on the closed rigid analytic disc over $K$ of radius $|p|^{\alpha}$, which we shall simply denote by $p^{\alpha}\G_a^+$.
\begin{Theorem}[Diamantine Picard Comparison Theorem]\label{t:representability-of-v-Picard-functor}	Let $(K,K^+)$ be a perfectoid field extension of $\Q_p$ and let $X\to \Spa(K,K^+)$ be any proper smooth rigid space. Then:
	\begin{enumerate}
		\item The natural map $(\uP_{X,\et})^\diamondsuit\to \uP^{\diamondsuit}_{X,\et}$ is an isomorphism.
	\item There is a short exact sequence of abelian sheaves on $\Perf_{K,\et}$, functorial in $X$,
	\begin{equation}\label{seq:Picv-ses}
	0\to \uP_{X,\et}^\diamondsuit\to \uP_{X,v}^\diamondsuit\xrightarrow{\HT\log} H^0(X,\wt\Omega^1_X)\otimes_K\G_a\to 0.
	\end{equation}
	On tangent spaces at the identity, this recovers the sequence  \cref{seq:HT-ses}.
	\item The sequence becomes split over the bounded open subgroup of $H^0(X,\wt\Omega^1_X)\otimes_K\G_a$ defined as the image of $H^1_v(X,\O^+)\otimes p^\alpha\G_a^+$ under the Hodge--Tate map $\HT$ from \eqref{seq:HT-ses}.
	\end{enumerate}
 \end{Theorem}
\begin{Remark}
	We will see in \cite{heuer-geometric-Simpson-Pic} that the sequence \eqref{seq:Picv-ses} is never split  globally over all of $H^0(X,\wt\Omega^1_X)\otimes_K\G_a$ except in the trivial case of $H^0(X,\Omega_X^1)=0$. In fact, it is better to think about the morphism $\HT\log$ as a non-trivial $\uP^{\diamondsuit}_{X,\et}$-torsor for the \'etale topology. As we explain in detail in \cite{heuer-geometric-Simpson-Pic}, this perspective makes $\HT\log$ into an analogue of the Hitchin fibration.
\end{Remark}

Part 1 of the Theorem makes precise the idea  that in order to study \'etale line bundles on $X\times Y$ where $Y$ is a perfectoid space, it suffices to understand the situation for rigid $Y$, and vice versa. Part 2 is a geometric upgrade of \cite[Theorem~1.3.2]{heuer-v_lb_rigid} in the proper case and could be described as a statement about ``relative $p$-adic Hodge theory with $\G_m$-coefficients''.

\medskip

We already mention some consequences of \cref{t:representability-of-v-Picard-functor} that we will deduce in the end:
\begin{Corollary}\label{l:Pic_et-is-v-sheaf}
	\begin{enumerate}
\item $(\uP_{X,\et})^\diamondsuit$ is a v-sheaf on $\Perf_K$.
\item If we regard $\uP_{X,\et}^\diamondsuit$ as a functor on all of $\LSD_{K,\et}$,  then it also satisfies the sheaf property for $v$-covers $Y'\to Y$ where $Y$ is a smooth rigid space and $Y'$ is perfectoid.
\item If the rigid Picard functor $\uP_{X,\et}$ is represented by a rigid group $G$, then its diamondification $G^\diamondsuit$ represents $\uPic_{X,\et}^\diamondsuit$.
\item  Conversely, if there is a rigid group $G$ with $G^\diamond\cong \uPic_{X,\et}^\diamondsuit$, then  $G$ represents $\uPic_{X,\et}$.
\item $\uP^\diamondsuit_{X,\et}$ is represented by a rigid group if and only if $\uP^\diamondsuit_{X,v}$ is represented by a rigid group.
\end{enumerate} 
\end{Corollary}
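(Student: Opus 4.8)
The plan is to deduce all four statements from the Diamantine Picard Comparison Theorem \ref{t:representability-of-v-Picard-functor}, working first over an algebraically closed field and then descending. For part (1), the key observation is that by part (2) of the Theorem, $\uP_{X,v}^\diamondsuit$ is a $v$-sheaf by construction (it is defined as the $v$-sheafification $R^1\pi^\diamond_{v\ast}\G_m$), and the quotient term $\wt\Omega^1_X\otimes\G_a$ is a $v$-sheaf because it is the vector group attached to a finite-dimensional $K$-vector space, hence representable by a (perfectoid-pullback of a) rigid space and thus a $v$-sheaf by \cite[Lemma~15.6]{etale-cohomology-of-diamonds} or the analogous descent statement in \cite{etale-cohomology-of-diamonds}. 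Since the kernel of a map of $v$-sheaves between $v$-sheaves is a $v$-sheaf, the short exact sequence \eqref{seq:Picv-ses} exhibits $\uP_{X,\et}^\diamondsuit$ as such a kernel, hence as a $v$-sheaf when $K$ is algebraically closed. For general perfectoid $(K,K^+)$ over $\Q_p$, I would pass to a completed algebraic closure $\wh{\overline K}$, note that $v$-descent is insensitive to this base change (a $v$-cover $Y'\to Y$ in $\Perf_K$ becomes one in $\Perf_{\wh{\overline K}}$ after base change, and the sheaf condition can be checked after the faithfully flat—indeed $v$-cover—base change $\Spd \wh{\overline K}\to\Spd K$), and invoke part (1) of the Theorem, which holds over any perfectoid field, to transport the statement back down. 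The refinement about $v$-covers $Y'\to Y$ with $Y$ rigid and $Y'$ perfectoid follows from the same kernel description together with the fact that $\uP_{X,v}^\diamondsuit$ and $\wt\Omega^1_X\otimes\G_a$ both satisfy this stronger gluing property — the former by definition of the $v$-topology, the latter because it is representable.

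For part (2): if $\uP_{X,\et}$ is represented by a rigid group $G$, then by the exactness of $-^\diamondsuit$ (noted in the excerpt, since $\iota_\ast$ is exact and $\delta_K^{-1}$ is exact) we have $(\uP_{X,\et})^\diamondsuit = G^\diamondsuit$, and part (1) of the Theorem identifies this with $\uPic_{X,\et}^\diamondsuit$. For part (3): suppose $G^\diamond$ represents $\uPic_{X,\et}^\diamondsuit$. The functor $X\mapsto X^\diamond$ from $\mathrm{SmRig}_K$ to $\LSD_K$ is fully faithful, so for any smooth rigid space $T$ we get
\[
\uPic_{X,\et}(T) \xleftarrow{\ \delta\ } \uPic_{X,\et}^\diamondsuit(T^\diamond) = G^\diamond(T^\diamond) = G(T),
\]
where the middle equality is the hypothesis evaluated on perfectoid covers of $T^\diamond$, and the right-hand equality is full faithfulness; the only thing to check is that the leftmost arrow is a bijection, i.e.\ that $\uPic_{X,\et}(T) \to \uPic_{X,\et}^\diamondsuit(T^\diamond)$ is an isomorphism for rigid $T$ — but this is precisely (the global-sections shadow of) part (1) of the Theorem, or more directly follows because $\uP_{X,\et}^\diamondsuit$ restricted along $\delta$ recovers $\uP_{X,\et}$ by adjunction $\delta^{-1}\dashv \delta_\ast$ together with the isomorphism of part (1). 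This yields $G(T)=\uPic_{X,\et}(T)$ naturally in $T$, i.e.\ $G$ represents $\uPic_{X,\et}$.

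For part (4), assume $\uP_{X,\et}^\diamondsuit$ is represented by a rigid group $H$ (equivalently, by part (3), $\uPic_{X,\et}$ is represented by a rigid group). I would use the short exact sequence \eqref{seq:Picv-ses}: it exhibits $\uP_{X,v}^\diamondsuit$ as a $\uP_{X,\et}^\diamondsuit$-torsor over the vector group $\mathcal A:=\wt\Omega^1_X\otimes\G_a$ in the \'etale topology on $\Perf_K$. Since $H$ and $\mathcal A$ are both (diamondifications of) rigid groups, and since such a torsor over an affine rigid base under a rigid group is automatically representable — one can use part (3) of the Theorem, namely that the sequence splits over the open subgroup $\wt\Omega^+\otimes\G_a^+$, to trivialise the torsor over an admissible covering of $\mathcal A$ by translates of this open polydisc-type subgroup, and then glue the local pieces $H^\diamond\times U$ along transition functions valued in $H^\diamond$, which by full faithfulness come from morphisms of rigid spaces — one obtains a rigid group $G'$ with $(G')^\diamond\cong\uP_{X,v}^\diamondsuit$. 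Here the main obstacle is the gluing step: one must check the cocycle condition for the transition maps is itself a rigid-analytic (not merely diamantine) datum, which is where full faithfulness of $-^\diamond$ on morphisms, combined with the explicit splitting in part (3) controlling the overlaps, does the work; and one must verify that the resulting glued object is separated and of finite type over $K$, which follows since it is an extension of the rigid group $\mathcal A$ by the rigid group $H$ as fppf/\'etale sheaves and extensions of rigid groups by rigid groups are rigid groups. For general perfectoid $K$ (not algebraically closed), part (4) is obtained by Galois descent along $\Spd\wh{\overline K}\to\Spd K$, using that representability by a rigid group over $\wh{\overline K}$ together with a descent datum descends to a rigid group over $K$.
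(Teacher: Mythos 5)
Parts (1) (first clause) and (2) of your proposal follow the paper's route and are fine, but there are two genuine gaps. The first concerns the ``in fact'' clause of (1) and, through it, your proof of (3). Theorem~\ref{t:representability-of-v-Picard-functor}.2 identifies $(\uP_{X,\et})^\diamondsuit$ with $\ker(\HT\log)$ only as sheaves on $\Perf_{K,\et}$, i.e.\ on perfectoid test objects. For a $v$-cover $Y'\to Y$ with $Y$ a smooth rigid space, the group you must compare with the equalizer is the value of the \'etale Picard functor on the rigid space $Y$ itself, and the assertion that this coincides with $\ker\bigl(\uP^{\diamondsuit}_{X,v}(Y)\to H^0(X,\Omega^1(-1))\otimes\O(Y)\bigr)$ is exactly what needs proof; it does not follow from ``the same kernel description'', which you only possess on perfectoid objects. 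The paper proves precisely this left-exactness on rigid $Y$ by a separate argument, comparing the Hodge--Tate logarithm sequences of the rigid spaces $Y$ and $X\times Y$ from \cite[Theorem~1.3]{heuer-v_lb_rigid} with the Leray sequences of Lemma~\ref{l:explicit-description-of-R^1pi_{tau*}F} and the decomposition $\Omega^1(X\times Y)=\bigl(\Omega^1(Y)\otimes_K\O(X)\bigr)\oplus\bigl(\Omega^1(X)\otimes_K\O(Y)\bigr)$, using $\O(X)=K$. The same missing point undermines your step in (3) that $\uPic_{X,\et}(T)\to\uPic^{\diamondsuit}_{X,\et}(T^\diamond)$ is bijective for rigid $T$: Theorem~\ref{t:representability-of-v-Picard-functor}.1 compares the functors only on $\Perf_{K,\et}$, and evaluating $\uP^{\diamondsuit}_{X,\et}$ (or $G^\diamond$) on $T^\diamond$ forces you to descend along a perfectoid $v$-cover of $T$ --- which is the unproven clause of (1). (Your reduction to algebraically closed $K$ by descent along $\Spd\wh{\overline K}\to\Spd K$ is likewise circular as sketched, since it presupposes descent for the very functor in question.)

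The second gap is in (4). The translates $a+\wt\Omega^{+}\otimes\G_a^+$ with $a\in\mathcal A(K)$ do not form an admissible cover of $\mathcal A\cong\G_a^d$: a Gauss point of the polydisc of radius $|p|^{-1}$ lies in no translate of the closed unit polydisc by a $K$-point, so the proposed covering does not exist. Moreover, the appeal to ``extensions of rigid groups by rigid groups are rigid groups'' (equivalently, representability of \'etale torsors under arbitrary smooth rigid groups) is unjustified and essentially begs the question being proved. The paper's argument is different and avoids both issues: writing $\uP^{\diamondsuit}_{X,v}=\bigcup_n\uP^{(n)}_{X,v}$ with $\uP^{(n)}_{X,v}$ the preimage of $p^{-n}\wt\Omega^{+}\otimes\G_a^+$, the case $n=0$ is rigid because of the splitting in Theorem~\ref{t:representability-of-v-Picard-functor}.3, and $[p^n]\colon\uP^{(n)}_{X,v}\to\uP^{(0)}_{X,v}$ is an \'etale torsor under the finite \'etale group $\underline{H^1_{\et}(X,\mu_{p^n})}$ by Corollary~\ref{c:R^npi_Fp-comparison}; hence each $\uP^{(n)}_{X,v}$ is \'etale over a rigid space and therefore rigid by \cite[Lemma~15.6]{etale-cohomology-of-diamonds}. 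If you want to salvage your torsor-gluing picture, you would have to replace the cover by this filtration and replace the structure group by a finite \'etale one, which is exactly the paper's multiplication-by-$p^n$ trick.
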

\begin{Remark} 
	The first part shows in particular that the functor sending $Y\in\Perf_{K,v}$ to the groupoid of analytic line bundles on $X\times Y$ is a $v$-stack in the sense of \cite[\S9]{etale-cohomology-of-diamonds}. This is similar in spirit to the statement that vector bundles on the Fargues--Fontaine curve satisfy $v$-descent in the perfectoid variable (\cite[Proposition~II.2.1]{FS-Geometrization} \cite[Proposition~19.5.3]{ScholzeBerkeleyLectureNotes}).
\end{Remark}
\begin{Remark}
	Parts 2--5 might open up new strategies to prove that the rigid Picard functor is always representable by a rigid group whose identity component is semi-abeloid.
\end{Remark}

In fact, our proof for the \'etale comparison, i.e.\ part 1 of Theorem~\ref{t:representability-of-v-Picard-functor}, works also in higher cohomological degree. More precisely, our proof will give the following stronger statement:
\begin{Theorem}\label{p:higher-cohomology-diamond-base-change}
	Let $F$ be one of  $\O$, $\O^\times$, $\Z/N\Z$, $N\in\Z$. Then for $n\geq 0$,
	\[(R^n\pi_{\et\ast}F)^\diamond\isomarrow R^n\pi^{\diamond}_{\et\ast}F\]
	is an isomorphism, and both of these sheaves on $\Perf_{K,\et}$ are already $v$-sheaves.
\end{Theorem}
\begin{Remark}
	In \cite{heuer-sheafified-paCS}, we show that there is also a higher degree analogue of part 2 of Theorem~\ref{t:representability-of-v-Picard-functor}: an extension of \cref{seq:Picv-ses} to a spectral sequence in the category of abelian sheaves on $\Perf_{K,\et}$ which is a multiplicative analogue of the Hodge--Tate spectral sequence of $X$.
\end{Remark}
\subsection{Outline of proof strategy}\label{s:outline}

We now give an outline of the proof of Theorem~\ref{t:representability-of-v-Picard-functor}, essentially the same line of argument will show \cref{p:higher-cohomology-diamond-base-change}.
The basic strategy is to study step-by-step the following two cohomological diagrams of sheaves on $\Perf_{K,\et}$:
Following the notation in \cite{heuer-v_lb_rigid}, let us write $\O^\times$ for the sheaf of units on $\mathrm{SmRig}_{K,\et}$ represented by $\G_m$. Let \[\Oone:=1+\m\O^+\subseteq \O^\times,\]
where we recall that $\m$ denotes the ideal of topologically nilpotent elements of $K^+$. If $K^+=\O_K$, then $\Oone$ is represented by the open disc of radius $1$ around the origin, but in general it is the smaller open subgroup of $\G_m$ given by the union of closed discs of radius $<1$. The role of $\O^\times_1$ is that it is the domain of convergence of the $p$-adic logarithm $\log:\O^\times_1\to \G_a$.

 Let
\[\bOx:=\O^\times/\Oone\]
denote the quotient sheaf. As before, we shall identify these sheaves with their diamondifications, so that we obtain a short exact sequence on $\mathrm{SmRig}_{K,\et}$ as well as on $\Perf_{K,\et}$
\[ 0\to \Oone\to \O^\times\to \bOx\to 0.\]
Exactly as in \cite[Lemma~2.17]{heuer-v_lb_rigid}, we will see that $\bOx$ is in fact already a $v$-sheaf on $\Perf_K$.

 We now apply to the above sequence the two natural transformations
\[ (R^n\pi_{\et\ast}-)^\diamondsuit\to R^n\pi^\diamondsuit_{\et\ast}(-^\diamondsuit) \to R^n\pi^\diamondsuit_{v\ast}(-^\diamondsuit).\]
This results in a large commutative diagram of sheaves on $\Perf_{K,\et}$
\[
	\begin{tikzcd}[column sep = 0.25cm]
	{\pi^\diamondsuit_{v\ast}\bOx} \arrow[r] &{R^1\pi^\diamondsuit_{v\ast}\Oone}\arrow[r] &
		{R^1\pi^\diamondsuit_{v\ast}\O^\times} \arrow[r] & {R^1\pi^\diamondsuit_{v\ast}\bOx} \arrow[r]& {R^2\pi^\diamondsuit_{v\ast}\Oone} \\
	{\pi^\diamondsuit_{\et\ast}\bOx} \arrow[r]\arrow[u]  &{R^1\pi^\diamondsuit_{\et\ast}\Oone}\arrow[r]\arrow[u]  &
	{R^1\pi^\diamondsuit_{\et\ast}\O^\times} \arrow[r]\arrow[u]  & {R^1\pi^\diamondsuit_{\et\ast}\bOx} \arrow[r]\arrow[u] & {R^2\pi^\diamondsuit_{\et\ast}\Oone}\arrow[u]  \\
	{(\pi_{\et\ast}\bOx)^\diamond}\arrow[u]\arrow[r]&
	{(R^1\pi_{\et\ast}\Oone)^\diamond}\arrow[u] \arrow[r] &
	{(R^1\pi_{\et\ast}\O^\times)^\diamondsuit} \arrow[u] \arrow[r] &
	{(R^1\pi_{\et\ast}\bOx)^\diamondsuit} \arrow[u] \arrow[r] & {(R^2\pi_{\et\ast}\Oone)^\diamondsuit} \arrow[u]
\end{tikzcd}
\]
in which the bottom two rows are exact with respect to the \'etale topology and the top row is exact with respect to the $v$-topology (i.e.\ we have tacitly applied $\nu_{\ast}$ to the top row).

We can without loss of generality assume that $X$ is connected. The first step of the proof can then be summarised by saying that we will prove:
\begin{Lemma}\label{l:describing-master-diagram-1}
	The following hold in the above commutative diagram:
	\begin{enumerate}[label=(\Alph*)]
		\item The leftmost horizontal transition maps are $0$.
		\item In the fourth column, both maps are isomorphisms.
		\item In the fifth column, the composition of the vertical maps is injective.
	\end{enumerate}
\end{Lemma}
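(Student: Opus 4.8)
The plan is to note that in each of the three rows the leftmost horizontal arrow is the connecting homomorphism $\partial\colon\pi_\ast\bOx\to R^1\pi_\ast U$ of the long exact sequence of $0\to U\to\O^\times\to\bOx\to0$ (for the bottom row using that $(-)^\diamondsuit$ is exact). By exactness, $\partial=0$ is equivalent to $\pi_\ast\O^\times\to\pi_\ast\bOx$ being an epimorphism of sheaves. Since $X$ is proper and connected, properness identifies $\pi_\ast\O^\times$ with $\O^\times$ and $\pi_\ast U$ with $U$, up to the harmless contribution of the finite field of constants $H^0(X,\O_X)$, so the claim reduces to $\pi_\ast\bOx=\bOx$, i.e.\ to showing that for every perfectoid (or rigid) $Y$ every section of $\bOx$ over $X\times Y$ is pulled back from $Y$. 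I would check this on the two layers of $\bOx$: its image in the value-group sheaf is locally constant, hence constant on the connected $X\times Y$ and so pulled back from $Y$; after removing it one is left with a section of the multiplicative group of the reduction of $X\times Y$, which is again constant by properness and connectedness of the reduction. The description of $\bOx$ and of reductions needed here is the one in \cite{heuer-v_lb_rigid}, and the same argument works in the \'etale and the $v$-topology alike.

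\textbf{Part (B).}
Here I have to show that both arrows in $(R^1\pi_{\et\ast}\bOx)^\diamondsuit\to R^1\pi^\diamondsuit_{\et\ast}\bOx\to R^1\pi^\diamondsuit_{v\ast}\bOx$ are isomorphisms. For the second arrow the plan is to use $R\nu_\ast\bOx=\bOx$, so that $R\pi^\diamondsuit_{v\ast}\bOx=R\pi^\diamondsuit_{\et\ast}R\nu_\ast\bOx=R\pi^\diamondsuit_{\et\ast}\bOx$; this holds because $\bOx$ is a $v$-sheaf \cite[Lemma~2.17]{heuer-v_lb_rigid} whose cohomology is controlled by reductions and hence does not see the $v$-topology. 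For the first arrow I would use that the small \'etale sites of $X$ and of $X^\diamondsuit$ agree \cite[Lemma~15.6]{etale-cohomology-of-diamonds}, so that \eqref{eq:base-change-from-rigid-to-diamond} is an isomorphism on rigid test objects; it then remains to show that for a perfectoid $Y$ every $\bOx$-torsor on $X\times Y$ descends, \'etale-locally on $Y$, to some rigid $Y_0$. Once more I would reduce this to the reduction layer of $\bOx$ (which is $\G_m$ on a reduced base) and to its value-group quotient (a locally constant sheaf of finitely generated abelian groups), for both of which such descent from perfectoid to rigid test objects is formal.

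\textbf{Part (C).}
I must show that the composite $(R^2\pi_{\et\ast}U)^\diamondsuit\to R^2\pi^\diamondsuit_{\et\ast}U\to R^2\pi^\diamondsuit_{v\ast}U$ is injective. Since $\Char K=0$, the plan is to express $U=\wh\G_m$ via the logarithm in terms of the additive group: an open subgroup of $U$ is carried isomorphically by $\log$ onto an open ball in $\G_a$, while the quotient is a finite successive extension of ``reduction'' additive groups of the form $\O^+/\varpi\O^+$. For coefficients of this additive type, properness and smoothness of $X$ should give (by coherent base change, up to a controlled almost-isomorphism) an identification of $R^i\pi_{\et\ast}$, rigid or diamantine, with an integral version of the $K$-vector group $H^i(X,\O_X)\otimes_K\G_a$, which is a $v$-sheaf equal to its own diamondification, so the first arrow is an isomorphism; and since $R\pi^\diamondsuit_{v\ast}\G_a=R\pi^\diamondsuit_{\et\ast}R\nu_\ast\G_a$ with $R\nu_\ast\G_a\simeq\bigoplus_{j\geq0}\wt\Omega^j_X[-j]$ by the Hodge--Tate decomposition \cite{Scholze_p-adicHodgeForRigid}, one gets $R^i\pi^\diamondsuit_{v\ast}\G_a\cong\bigoplus_{a+b=i}H^a(X,\wt\Omega^b_X)\otimes_K\G_a$, in which the second arrow is the inclusion of the $b=0$ summand, hence injective. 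D\'evissage along the logarithm exact sequence and the $\varpi$-adic filtration then yields the claim for $U$; alternatively one may invoke a direct computation of $R\nu_\ast\wh\G_m$ from \cite{heuer-v_lb_rigid}.

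\textbf{Main obstacle.}
The vanishing in (A) and the additive reductions in (C) are essentially formal given properness and connectedness of $X$ and the Hodge--Tate decomposition. The real content is Part (B): that the cohomology of the ``discrete'' sheaf $\bOx$ is insensitive both to enlarging the test objects from rigid to perfectoid spaces and to refining the topology from \'etale to $v$. The harder half is the diamondification isomorphism, in the relative form ``every $\bOx$-torsor on $X\times Y$ descends \'etale-locally on $Y$ to a rigid $Y_0$'', for which the structure theory of $\bOx$ developed in \cite{heuer-v_lb_rigid} is essential; this is where I expect the main difficulty to lie.
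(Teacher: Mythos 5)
Your skeleton matches the paper in two places: the reduction of (A) to showing $\pi_\ast\bOx=\bOx$ (so that the connecting map vanishes because $\O^\times\to\bOx$ is already an epimorphism over the base), and the dévissage in (C) along the logarithm with the Hodge--Tate degeneration giving injectivity on the $\O$-part. But at every decisive point the proposal asserts rather than proves, and some of the assertions used to declare steps ``formal'' are not correct. In (A), the claim that a section of the units of the reduction of $X\times Y$ is constant ``by properness and connectedness of the reduction'' has no justification: $\bOx$ is a sheafified quotient, its sections over $X\times Y$ need not lift to units, and the reduction of $X\times Y$ over that of $Y$ is not a proper map of varieties to which such a constancy principle applies. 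The paper's proof of $\bOx(X\times Y)=\bOx(Y)$ (\cref{l:H^1(bOx)-on-XxY-insensitive-to-sec-factor}) instead goes through the universal pro-finite-étale cover, the computation $\bOx(X)=K^\times/(1+\m)$ from \cite{heuer-v_lb_rigid}, the relative primitive comparison $H^1_v(X\times Y,\O)=H^1_{\et}(X,\Q_p)\otimes\O(Y)$ (\cref{p:affinoid-perfectoid-primitive-comparison}), and a careful locally-constant argument — none of this is soft.

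The gap is most serious in (B), which you yourself identify as the crux but then dispatch with two unproved claims. First, ``$R\nu_\ast\bOx=\bOx$'' in the degree you need is exactly the étale-versus-$v$ comparison $H^1_{\et}(X\times Y,\bOx)=H^1_v(X\times Y,\bOx)$, which the paper establishes by a Cartan--Leray argument over toric/pro-finite-étale covers together with the approximation results (\cref{p:bOx-cohom-et-vs-v}, \cref{c:H^n(bOx)-for-general-inverse-systems-of-products}); ``controlled by reductions, hence does not see the $v$-topology'' is not an argument, and one must also pass from $H^1$ of products to $R^1\pi_\ast$, which requires the splitting Lemma~\ref{l:explicit-description-of-R^1pi_{tau*}F} and a $K$-point. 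Second, the descent of $\bOx$-torsors from perfectoid $Y$ to rigid $Y_0$ is precisely the content of the rigid approximation Proposition~\ref{p:rigid-approxi-for-bOx} (the technical heart of the paper, proved via tilde-limits, almost mathematics, and a case analysis over rational localisations and finite étale maps); it is emphatically not formal — the analogous statements for the graded pieces you invoke behave badly, since for $\O^+$ and $\O^\times$ the approximation statement fails already in degree $0$ (\cref{rm:XxY-rep}), and for perfectoid $K$ the quotient $\O^\times/\O^{+\times}$ is not a locally constant sheaf of finitely generated groups (the value group is divisible), so a soft dévissage of $\bOx$ cannot distinguish the true statement from the false ones. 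In (C) the same issue recurs: the comparison for the $\O^+/\varpi$-type pieces and the identification of the degree-one cokernels (the paper's $C_1\isomarrow C_2$, resting on \cref{p:relative-HT-for-1+m} and the $\mu_{p^\infty}$-comparisons \cref{c:base-change-for-etale-sheaves}, \cref{c:R^npi_Fp-comparison}) are needed for the chase and are not supplied by ``coherent base change''. So the proposal is a reasonable plan whose hard steps coincide with, but do not replace, the paper's Sections 3--5.
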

Once this is achieved, it follows formally that the top row is already exact for the \'etale topology: Indeed, exactness at the second and third term follows from (A) using that $\nu_\ast$ is left-exact, and exactness at the fourth term follows from (B) and (C) by a diagram chase.

 At this point, the 5-Lemma (applied once to the bottom maps and once to the compositions) reduces us to proving a variant of the Theorem for $\Oone$ instead of $\G_m$:
\begin{Proposition}\label{p:relative-HT-for-1+m}\leavevmode
	\begin{enumerate}
		\item The map $(R^1\pi_{\et\ast}\Oone)^\diamondsuit\to R^1\pi^\diamondsuit_{\et\ast}\Oone$ is an isomorphism.
		\item There is a short exact sequence of abelian sheaves on $\Perf_{K,\et}$
	\[ 0\to R^1\pi^\diamondsuit_{\et\ast}\Oone\to R^1\pi^\diamondsuit_{v\ast}\Oone\xrightarrow{\HT\log} H^0(X,\wt\Omega^1_X)\otimes_K\G_a\to 0.\]
\end{enumerate}
\end{Proposition}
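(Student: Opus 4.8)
The plan is to reduce everything to the local structure of $U = 1 + \m\O^+$ via its filtration by the subgroups $U_r := 1 + \varpi^r \O^+$, together with the logarithm map, which identifies $U$ up to a finite discrepancy with $\O^+$ (more precisely $\log$ gives an isomorphism $U_r \isomarrow \varpi^r\O^+ = \G_a^{+,r}$ for $r$ large enough that $p$-adic convergence holds, and on the remaining finite-level quotient $U/U_r$ one is dealing with a finite-type situation). For part~(1), I would first note that $\O^+ = \G_a^+$ and hence each graded piece of the $U$-filtration is, up to isomorphism, either $\G_a^+$ or a torsion piece, so it suffices to prove the analogous statement $(R^1\pi_{\et\ast}\G_a^+)^\diamondsuit \isomarrow R^1\pi^\diamondsuit_{\et\ast}\G_a^+$, and similarly for the finite-level torsion quotients. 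For $\G_a^+$ this is a consequence of coherent base change / relative GAGA-type control: $R^1\pi_{\et\ast}\O_X^+$ is, after inverting $\varpi$, the vector group on $H^1(X,\O_X)$, and $R^1\pi^\diamondsuit_{\et\ast}$ of the structure sheaf of the relative analytic space $X\times Y$ computes the same thing because $X$ is proper (so cohomology commutes with the base change to an affinoid perfectoid $Y$, by e.g. the projection formula and flat base change for coherent cohomology of proper rigid spaces). One then has to chase the filtration: a $\lim$ over $r$, controlling the transition maps, and I would want to invoke that $R^i\pi^\diamondsuit_{\et\ast}$ of a bounded $\varpi$-power-torsion sheaf is well-behaved and that $R\lim$ of the tower converges because the relevant $R^1$-groups are (after $\tf$) finite-dimensional Banach and the transition maps are eventually multiplication by $\varpi$.

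For part~(2), the heart is the \emph{multiplicative Hodge--Tate sequence} of \cite[Theorem~1.3.2]{heuer-v_lb_rigid} applied in families. Over a perfectoid base $Y = \Spa(R,R^+)$ with $K$ algebraically closed, one has the $v$-local computation of $R^1\pi^\diamondsuit_{v\ast}U$: by the logarithm, this is governed by $R^1\pi^\diamondsuit_{v\ast}\O^+_X$ (again up to finite level), and the key input is that $v$-cohomology of $\O$ on $X\times Y$ sees the Hodge--Tate decomposition, i.e. there is a natural map $R^1\pi^\diamondsuit_{v\ast}\O \to \pi^\diamondsuit_{\et\ast}\wt\Omega^1_X \otimes(\text{something})$ coming from the Faltings extension / the Hodge--Tate spectral sequence relative to $Y$. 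Concretely I would construct the map $\HT\log$ as the composite: $R^1\pi^\diamondsuit_{v\ast}U \xrightarrow{\log} R^1\pi^\diamondsuit_{v\ast}\O_X[\text{up to }\tf] \to H^0(X,\Omega^1_X)\{-1\}\otimes\O(Y) = \wt\Omega^1_X\otimes\G_a(Y)$, where the second arrow is the edge map of the relative Hodge--Tate sequence (this is exactly the ``primitive comparison in the relative setting'' — one uses that $X$ proper smooth forces $R^1\pi^\diamondsuit_{v\ast}\O = H^1(X,\O)\otimes\O(Y) \oplus H^0(X,\Omega^1)\{-1\}\otimes\O(Y)$ after base change to $Y$, functorially). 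Exactness on the left is then part~(1) combined with the fact that $R^1\pi^\diamondsuit_{\et\ast}U \to R^1\pi^\diamondsuit_{v\ast}U$ is injective with the stated kernel — this needs $\pi^\diamondsuit_{v\ast}U / \pi^\diamondsuit_{\et\ast}U$ to vanish, which follows because $\pi^\diamondsuit_{?\ast}U$ for both topologies is just $1+\m\O(Y)^+$ (global units of $X\times Y$ reduce to those of $Y$ since $X$ is proper and connected, geometrically connected as $K$ is algebraically closed). Surjectivity on the right is the genuinely delicate point.

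I expect the \textbf{main obstacle} to be surjectivity of $\HT\log$ onto $\wt\Omega^1_X\otimes\G_a$ at the level of sheaves — that is, showing that for an affinoid perfectoid $Y$ and a class $\omega \in H^0(X,\Omega^1_X)\{-1\}\otimes\O(Y)^+$ one can, \'etale-locally on $Y$, realise $\omega$ by an actual $v$-line bundle (equivalently a $v$-cohomology class of $U$) on $X\times Y$. The non-relative case is \cite[Theorem~1.3.2]{heuer-v_lb_rigid} and proceeds by an explicit construction using a choice of generators of $\pi_1(X)$ and the associated Higgs field; the relative version should run the same construction with $\O(Y)^+$-coefficients, the point being that this construction is natural in the coefficient ring and hence sheafifies. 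The technical care needed is: (a) controlling convergence of $\log$/$\exp$ uniformly over $Y$ — this is why part~(3) only splits the sequence over the integral open subgroup $\wt\Omega^+\otimes\G_a^+$, where $\exp$ converges, and globally one writes $\wt\Omega\otimes\G_a$ as an increasing union of translates $p^{-n}\wt\Omega^+\otimes\G_a^+$; (b) checking the Breuil--Kisin--Fargues twist $\{-1\}$ is the correct one — this is pinned down by the requirement that the sequence be functorial in $X\to\Spa(K)$ and compatible with the non-relative sequence of \cite{heuer-v_lb_rigid}, so I would simply cite \cite[Definition~2.24]{heuer-v_lb_rigid} and match normalisations. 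Part~(3) itself is then comparatively formal: over $\wt\Omega^+\otimes\G_a^+$ the section is given by $\exp$ applied to (a chosen lift of) the Higgs field, which converges by definition of $\wt\Omega^+$ via the Hodge--Tate morphism on $\Hom(\pi_1(X,x),p\G_a^+)$, and one checks it is a group homomorphism and a section on test objects.
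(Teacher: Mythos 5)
Your part 2 is essentially the paper's argument: $\HT\log$ is the composite of $\log$ with the edge map of the relative Hodge--Tate sequence for $\O$ (Proposition~\ref{p:HTses-of-R^1pi-for-O}), left-exactness follows from part 1 together with that sequence, and right-exactness is proved by running \cite[\S3.5]{heuer-v_lb_rigid} relatively over $Y$ via the Cartan--Leray sequence of the universal pro-finite-\'etale cover $\wt X\times Y\to X\times Y$; the surjection one needs is $\Hom_{\cts}(\pi_1(X,x),K)\otimes_K\O(Y)\twoheadrightarrow H^0(X,\wtOm^1_X)\otimes\O(Y)$ from degeneration of the Hodge--Tate spectral sequence, plus the fact that $\log:\Hom_{\cts}(G,U(Y))\to\Hom_{\cts}(G,\O(Y))$ becomes surjective after sheafification because the maximal torsionfree abelian pro-$p$ quotient of $\pi_1(X,x)$ is finite free over $\Z_p$. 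You correctly isolate right-exactness as the delicate point; the one input you omit is $H^0(\wt X\times Y,\O^+)=\O^+(Y)$ (Lemma~\ref{l:wtpi_astO^+}), which the Cartan--Leray argument needs and which itself rests on the approximation machinery.

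Part 1 is where your route diverges and where there is a genuine gap. You filter $U$ by $U_r=1+\varpi^r\O^+$ and reduce to a comparison for $\G_a^+=\O^+$ plus an $R\lim$ that you propose to control ``after $\tf$'' by finite-dimensionality of Banach cohomology. But the entire content of part 1 lives in the torsion that $\tf$ destroys: the difference between $R^\bullet\pi_\ast U$ and $R^\bullet\pi_\ast\O$ is $R^\bullet\pi_\ast\mu_{p^\infty}$, i.e.\ \'etale cohomology of $X$ with $\Q_p/\Z_p(1)$-coefficients, which is large (for $X$ abeloid it contains the whole $p$-divisible group of the dual abeloid) and is invisible after inverting $p$. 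Correspondingly, the integral comparison for $\O^+$ that your filtration needs in degrees $1$ and $2$ is not a consequence of coherent base change: the underlying approximation statement already fails for $\O^+$ in degree $0$ (Remark~\ref{rm:XxY-rep}), and only the reductions $\O^+/\varpi$ are controlled (Corollary~\ref{c:Lemma-B}). The paper avoids $\O^+$ entirely by using the short exact sequence $0\to\mu_{p^\infty}\to U\xrightarrow{\log}\O\to 0$ and the 5-lemma, so that the two inputs become the rational comparison $H^n(X\times Y,\O)=H^n(X,\O)\otimes_K\O(Y)$ (Proposition~\ref{p:affinoid-perfectoid-primitive-comparison}) and, crucially, the comparison for $\mu_{p^\infty}$, proved via base change for torsion coefficients, the Primitive Comparison Theorem and the K\"unneth formula (Corollaries~\ref{c:base-change-for-etale-sheaves} and~\ref{c:R^npi_Fp-comparison}). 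This torsion ingredient is entirely absent from your proposal, and without it (or a genuinely integral comparison for $\O^+$, which you do not establish) part 1 does not close.
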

In order to prove this, we apply the same strategy as above to the logarithm sequence 
\[ 0\to \mu_{p^\infty}\to \Oone\xrightarrow{\log} \O\to 0,\]
which results in a commutative diagram of sheaves on  $\Perf_{K,\et}$
\begin{equation}\label{eq:master-dg-2}
\begin{tikzcd}[column sep = 0.25cm]
{\pi^\diamondsuit_{v\ast}\O} \arrow[r] &{R^1\pi^\diamondsuit_{v\ast}\mu_{p^\infty}} \arrow[r]           & {R^1\pi^\diamondsuit_{v\ast}\Oone} \arrow[r]           & {R^1\pi^\diamondsuit_{v\ast}\O} \arrow[r]           & {R^2\pi^\diamondsuit_{v\ast}\mu_{p^\infty}} \\
{\pi^\diamondsuit_{\et\ast}\O} \arrow[r] \arrow[u]  &{R^1\pi^\diamondsuit_{\et\ast}\mu_{p^\infty}} \arrow[r] \arrow[u]            & {R^1\pi^\diamondsuit_{\et\ast}\Oone} \arrow[r] \arrow[u]            & {R^1\pi^\diamondsuit_{\et\ast}\O} \arrow[r] \arrow[u]           & {R^2\pi^\diamondsuit_{\et\ast}\mu_{p^\infty}}  \arrow[u] \\
{(\pi_{\et\ast}\O)^\diamondsuit} \arrow[u] \arrow[r] &{(R^1\pi_{\et\ast}\mu_{p^\infty})^\diamondsuit} \arrow[u] \arrow[r] & {(R^1\pi_{\et\ast}\Oone)^\diamondsuit} \arrow[u] \arrow[r] & {(R^1\pi_{\et\ast}\O)^\diamondsuit} \arrow[u] \arrow[r] & {(R^2\pi_{\et\ast}\mu_{p^\infty})^\diamondsuit} \arrow[u]
\end{tikzcd}
\end{equation}
in which again the bottom two rows are exact with respect to the \'etale topology and the top row is exact with respect to the $v$-topology. The fourth row of this diagram is described by \cref{p:HTses-of-R^1pi-for-O}. We use this description to prove:

\begin{Lemma}\label{l:describing-master-diagram-2}
	The following hold in the above commutative diagram:
\begin{enumerate}[label=(\Alph*)]
	\setcounter{enumi}{3}
	\item The leftmost horizontal transition maps are $0$.
	\item In the second and fifth column, all maps are isomorphisms.
	\item Further towards the right, the map $(R^2\pi_{\et\ast}\O)^\diamondsuit\to (R^2\pi_{v\ast}\O^\diamondsuit)$ is injective.
\end{enumerate}
\end{Lemma}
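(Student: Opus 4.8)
\emph{Proof strategy.} The plan is to deduce (D)--(G) by identifying, explicitly, all the sheaves appearing in the diagram \eqref{eq:master-dg-2} and in its continuation one step to the right. Two computations do the work. First, coherent cohomology and base change for the proper morphism $\pi$: for a perfectoid affinoid $Y=\Spa(R,R^+)$ the adic space $X\times Y$ is proper over $Y$, its $\O$-cohomology is computed by analytic (coherent) cohomology, and $H^q(X\times Y,\O)=H^q(X,\O_X)\otimes_K R$ since $H^q(X,\O_X)$ is a finite-dimensional $K$-vector space; the same holds over smooth rigid $Y$. Hence $R^q\pi_{\et\ast}\O$ on $\mathrm{SmRig}_{K,\et}$ and $R^q\pi^\diamond_{\et\ast}\O$ on $\Perf_{K,\et}$ are both the rigid vector group $H^q(X,\O_X)\otimes_K\G_a$, and the base change map $(R^q\pi_{\et\ast}\O)^\diamond\to R^q\pi^\diamond_{\et\ast}\O$ is the identity; as $X$ is connected, in degree $0$ this group is $\G_a$. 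Second, by \cref{c:R^npi_Fp-comparison}, together with smooth proper base change on the rigid side and the invariance of $\Z/p^n$-\'etale cohomology of $X$ under diamondification and under passage to the $v$-topology (\cite{etale-cohomology-of-diamonds}), each of $(R^i\pi_{\et\ast}\mu_{p^n})^\diamond$, $R^i\pi^\diamond_{\et\ast}\mu_{p^n}$ and $R^i\pi^\diamond_{v\ast}\mu_{p^n}$ equals the finite constant group $\underline{H^i_\et(X,\mu_{p^n})}$, compatibly with the transition maps of the diagram; since $\pi$ and $\pi^\diamond$ are qcqs, the same holds after the colimit $\mu_{p^\infty}=\varinjlim_n\mu_{p^n}$.

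Granting these, (E) is immediate: in the second and fifth columns every vertical map is the identity of $\underline{H^i_\et(X,\mu_{p^\infty})}$, $i=1,2$. For (D), the leftmost horizontal map in each row is the connecting homomorphism of the long exact sequence of $0\to\mu_{p^\infty}\to U\xrightarrow{\log}\O\to 0$; it is therefore a homomorphism of abelian sheaves from the column-$1$ term, which is the \emph{connected} locally spatial diamond $\G_a$ carrying the $K$-rational zero section, to $\underline{H^1_\et(X,\mu_{p^\infty})}$, a $0$-dimensional diamond. A morphism of diamonds out of a connected source factors through a single component of the target, and, being a group homomorphism, through the component hit by the zero section; hence it vanishes, in all three rows. (Formally this turns each row into a short exact sequence $0\to R^1\pi_\ast\mu_{p^\infty}\to R^1\pi_\ast U\to\ker(R^1\pi_\ast\O\to R^2\pi_\ast\mu_{p^\infty})\to 0$, which is what the ensuing diagram chase, a $5$-lemma argument, needs.) For (F), the bottom map in the fourth column is the identity of $H^1(X,\O_X)\otimes_K\G_a$ by the first computation, and this is a rigid vector group, hence a $v$-sheaf -- equivalently it is the kernel of the map of $v$-sheaves $R^1\pi^\diamond_{v\ast}\O\to\wt\Omega^1_X\otimes\G_a$.

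There remains (G), the injectivity of $(R^2\pi_{\et\ast}\O)^\diamond\to R^2\pi^\diamond_{v\ast}\O$. By the first computation the source is $R^2\pi^\diamond_{\et\ast}\O=H^2(X,\O_X)\otimes_K\G_a$, so it suffices to show $R^2\pi^\diamond_{\et\ast}\O\hookrightarrow R^2\pi^\diamond_{v\ast}\O$. For this I would invoke the relative Hodge--Tate comparison: writing $\nu$ for the projection from the $v$- to the \'etale site of $X^\diamond$, one has $R^0\nu_\ast\O=\O$ and $R^1\nu_\ast\O=\wt\Omega^1_X$ by Scholze's Hodge--Tate decomposition \cite{Scholze_p-adicHodgeForRigid}, and these feed into the Leray spectral sequence $E_2^{a,b}=R^a\pi^\diamond_{\et\ast}(R^b\nu_\ast\O)\Rightarrow R^{a+b}\pi^\diamond_{v\ast}\O$, whose bottom row is $E_2^{a,0}=R^a\pi^\diamond_{\et\ast}\O$ and with $E_2^{0,1}=H^0(X,\wt\Omega^1_X)\otimes_K\G_a=\wt\Omega^1_X\otimes\G_a$. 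What is needed is the vanishing of the incoming differential $d_2\colon E_2^{0,1}\to E_2^{2,0}$, i.e.\ a piece of the degeneration of the Hodge--Tate spectral sequence of the proper smooth $X$; then $R^2\pi^\diamond_{\et\ast}\O=E_2^{2,0}$ injects into $R^2\pi^\diamond_{v\ast}\O$. The same vanishing of $d_2$ identifies the total-degree-$1$ part of the spectral sequence with the exact sequence $0\to R^1\pi^\diamond_{\et\ast}\O\to R^1\pi^\diamond_{v\ast}\O\to\wt\Omega^1_X\otimes\G_a\to 0$ -- the ``$\O$-analogue of \cref{p:relative-HT-for-1+m}'' -- which the $5$-lemma step uses in column $4$.

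The load-bearing point is this relative Hodge--Tate input: the degeneration of the relative Hodge--Tate spectral sequence of $\pi$ over a general perfectoid base. When $K$ is algebraically closed this is the content of Scholze's $p$-adic Hodge theory; the general perfectoid case should reduce to it using the faithful flatness of $K\to\wh{\overline{K}}$ together with coherent base change. Granting it, (D)--(F) are bookkeeping in \eqref{eq:master-dg-2} on top of coherent base change and \cref{c:R^npi_Fp-comparison}; among those, the only mildly delicate point is the connectedness argument for (D).
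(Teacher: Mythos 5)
Your handling of (D)--(F) is essentially the paper's: both rest on the two explicit identifications (coherent base change for $\O$ in all degrees, and the comparison for $p^{n}$-torsion coefficients), and your connectedness argument for (D) is a correct, if heavier, alternative to the one actually used, which simply notes that $\pi_{\ast}U\to\pi_{\ast}\O$ is identified with the surjection $\log\colon U\to\O$ of sheaves on the base (Corollary~\ref{c:pi_astO^+}), so the connecting map vanishes. Two caveats there: the \emph{constancy} of $R^{i}\pi_{\ast}\mu_{p^{n}}$ requires $K$ algebraically closed (for general perfectoid $K$ one only gets a locally constant sheaf, which still suffices for your argument); and the identification $R^{i}\pi^{\diamondsuit}_{\et\ast}\mu_{p^{n}}=R^{i}\pi^{\diamondsuit}_{v\ast}\mu_{p^{n}}$ is \emph{not} covered by the base change theorems of \cite{etale-cohomology-of-diamonds}, which require the coefficients to be killed by an integer prime to $p$ --- for $p$-power torsion it is exactly \cref{c:R^npi_Fp-comparison}, whose proof already goes through the relative primitive comparison theorem and the K\"unneth formula.

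The genuine gaps are in the two ``computations'' you treat as inputs. First, $H^{q}(X\times Y,\O)=H^{q}(X,\O)\otimes_{K}\O(Y)$ for \emph{perfectoid} $Y$ is not standard coherent base change: $X\times Y$ is not a rigid space and carries no a priori finiteness. This is \cref{p:affinoid-perfectoid-primitive-comparison}, proved by choosing a finite toric cover, invoking Kedlaya--Liu for acyclicity of $\O$ on the pieces, and using that the \v{C}ech complex has cohomology of bounded $p$-torsion to commute $\hotimes\,\O^{+}(Y)$ past cohomology; since every part of (D)--(G) hangs on it, it cannot be waved through in one sentence. Second, for (G) your route needs the relative Hodge--Tate decomposition $R^{b}\nu_{\ast}\O=\wtOm^{b}_{X}\otimes_{\O_X}\O_{X\times Y}$ on the product $X\times Y$ with $Y$ perfectoid; Scholze proves this for smooth rigid spaces, not for such mixed products, and establishing it is of the same order of difficulty as the K\"unneth statements above --- you correctly flag this as the load-bearing point but do not supply it. There is also a mismatch in the abutment: your Leray spectral sequence for $X^{\diamondsuit}_{v}\to X^{\diamondsuit}_{\et}\to\Perf_{K,\et}$ converges to the \emph{\'etale} sheafification of $Y\mapsto H^{a+b}_{v}(X\times Y,\O)$, whereas the entry in the diagram is $\nu_{\ast}R^{a+b}\pi^{\diamondsuit}_{v\ast}\O$, its \emph{$v$-}sheafification; the comparison map between the two is not obviously injective, so even granting $d_{2}=0$ you have not landed injectively in the right sheaf. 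The paper sidesteps all of this for (G): by \cref{p:affinoid-perfectoid-primitive-comparison} the map in question is literally $H^{2}_{\et}(X,\O)\otimes_{K}\O(Y)\to H^{2}_{v}(X,\O)\otimes_{K}\O(Y)$, which is injective because $H^{2}_{\et}(X,\O)\to H^{2}_{v}(X,\O)$ is injective by degeneration of the absolute Hodge--Tate spectral sequence of $X$ \cite{BMS} and $\O(Y)$ is flat over $K$. If you want to keep your spectral-sequence formulation of (G), you must first prove the K\"unneth-type statements anyway, at which point the direct argument is shorter.
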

Using parts (D), (E), and a direct argument describing the vertical cokernels in the middle of the top two rows, we see that all sheaves in the above diagram are in fact \mbox{$v$-sheaves}, so that may regard \eqref{eq:master-dg-2} as a commutative diagram of $v$-sheaves with exact rows.
Part (F) then implies part (C) above. Parts (D) and (E) will be enough to prove part 1 of Proposition~\ref{p:relative-HT-for-1+m}.1, and left-exactness in Proposition~\ref{p:relative-HT-for-1+m}.2. Finally, we will use a relative version of the argument in \cite[\S3.5]{heuer-v_lb_rigid} to prove the right-exactness, using the diamantine universal cover.  This will complete the proof of the Theorem.

\medskip

From all of the above steps, the key step is arguably the proof of (B): This is where we need to study the transition from functors on smooth rigid spaces to functors on perfectoid spaces in great detail. We do this by proving a very general rigid approximation lemma. This will also be handy to complete some of the other steps, although only (B) uses it in its full force. Proving the rigid approximation lemma is the goal of the next section.

\section{A rigid approximation lemma}
We now begin the proof of \cref{t:representability-of-v-Picard-functor} with a rigid approximation lemma for the sheaf $\bOx=\O^\times/\Oone$ introduced in \cref{s:outline}. This will be required for step (B). 
For the statement we use tilde-limits \cite[(2.4.1)]{huber2013etale}\cite[\S2.4]{ScholzeWeinstein} as well as a slight strengthening for affinoids:
\begin{Definition}
	For a cofiltered inverse system of adic spaces $(X_i)_{i\in I}$ with qcqs transition maps, and an adic space $X_\infty$ with compatible maps $X_\infty\to X_i$ for all $i\in I$, we write
	\[ X_\infty\sim \varprojlim_{i\in I} X_i\]
	if on the underlying topological spaces, the maps induce a homeomorphism $|X_\infty|=\varprojlim |X_i|$, and if there is a cover of $X_\infty$ by affinoid opens $U_\infty$ for which the map $\varinjlim_{U}\O(U)\to \O(U_\infty)$
	has dense image, where $U\subseteq X_i$ runs through all affinoid opens through which $U_\infty\to X_i$ factors, and all $i$. If moreover all $X_i$ and $X_\infty$ are affinoid, we write
	\[ X_\infty\approx \varprojlim_{i\in I} X_i\]
	if already the global sections $\varinjlim \O(X_i)\to \O(X_\infty)$ have dense image.
\end{Definition}

\begin{Proposition}\label{p:rigid-approxi-for-bOx}
	Let $Y$ be an affinoid perfectoid space over $K$ and let $(Y_i)_{i\in I}$ be a cofiltered inverse system of affinoid smooth rigid spaces such that $Y\approx \varprojlim Y_i$. Let $X$ be a qcqs adic space over $K$ that is either smooth or perfectoid. Let $U_i\to X\times Y_i$ be a qcqs \'etale map and set $U_j:=U_i\times_{Y_i}Y_j$ and $U:=U_i\times_{Y_i}Y$. Then for all $n\geq 0$, the natural map 
	\[\varinjlim_{j\geq i}H^n_{\et}(U_{j},F)\to H^n_{\et}(U,F)\]
	is an isomorphism
	for $F=\bOx$ or $F=\O^+/\varpi$  for any $0\neq \varpi\in \m$.
\end{Proposition}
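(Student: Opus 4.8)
The plan is to reduce the statement for $F = \bOx$ and for $F = \O^+/\varpi$ to the corresponding statement about $v$-sheaf cohomology along a tilde-limit of adic spaces, which is a general fact that one can extract from \cite[\S14]{etale-cohomology-of-diamonds} via diamondification. First I would set up the limit on the level of étale sites: since $U_i \to X \times Y_i$ is qc étale and $Y \approx \varprojlim Y_i$, the adic space $U$ satisfies $U \sim \varprojlim_{j \geq i} U_j$ (here $U$ is smooth over $Y$ or perfectoid according to whether $X$ is, so it has a well-behaved étale site), and hence on the associated diamonds $U^\diamond \sim \varprojlim U_j^\diamond$. By \cite[Proposition~14.9]{etale-cohomology-of-diamonds}, for any sheaf $G$ pulled back from a fixed $U_i$ along a limit of qcqs maps of (locally spatial) diamonds, one has $H^n_{\et}(U^\diamond, G) = \varinjlim_{j \geq i} H^n_{\et}(U_j^\diamond, G)$, provided the $U_j^\diamond$ are spatial — which holds because $U_j \to X\times Y_j$ is qc and $X\times Y_j$ is qcqs. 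So the crux is to identify $H^n_\et(U,F)$ with $H^n_\et(U^\diamond, F^\diamond)$ compatibly in the system, for $F = \O^+/\varpi$ and $F = \bOx$.

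For $F = \O^+/\varpi$ this identification is exactly \cite[Proposition~8.8, Theorem~8.7]{Scholze_p-adicHodgeForRigid}-style almost purity input: on the (pro-)étale site $\O^+/\varpi$ is computed the same way whether one works with the rigid space or its diamond, and more precisely the pro-étale-to-étale and diamondification comparisons are compatible. I would instead phrase it directly: $\O^+/\varpi$ is a $v$-sheaf (up to the almost issue, controlled by \cite[Lemma~8.8]{etale-cohomology-of-diamonds}-type statements), and its étale cohomology on a qcqs smooth adic space agrees with that on the diamond. For $F = \bOx$ the key point is \cite[Lemma~2.17]{heuer-v_lb_rigid} (cited in the excerpt), which says $\bOx$ is already a $v$-sheaf on $\Perf_K$, together with the fact that $\bOx$ on a smooth rigid space is the diamondification of the rigid $\bOx$ — this is built into the convention, announced earlier, that we identify sheaves on $\mathrm{SmRig}_{K,\et}$ with their diamondifications. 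With $F^\diamond$ in hand as an honest sheaf on the limit of diamonds, \cite[Proposition~14.9]{etale-cohomology-of-diamonds} gives the colimit formula.

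Concretely the steps are: (1) show $U \sim \varprojlim_{j\geq i} U_j$ using $Y\approx\varprojlim Y_i$ and that $U_i\to X\times Y_i$ is of finite presentation, hence pulls back and spreads out; (2) pass to diamonds, noting each $U_j^\diamond$ is spatial since $U_j$ is qcqs over a qcqs base, and the transition maps are qcqs; (3) for $F = \O^+/\varpi$, invoke the (almost) identification of $\et$-cohomology of $\O^+/\varpi$ on $U_j$ with that of its diamond, e.g.\ via the pro-étale computation of \cite{Scholze_p-adicHodgeForRigid} combined with \cite[\S14--15]{etale-cohomology-of-diamonds}; (4) for $F = \bOx$, use that $\bOx$ is a $v$-sheaf by \cite[Lemma~2.17]{heuer-v_lb_rigid} and that its formation commutes with diamondification by construction; (5) apply \cite[Proposition~14.9]{etale-cohomology-of-diamonds} to the limit $U^\diamond = \varprojlim U_j^\diamond$ to conclude.

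The main obstacle I expect is step (3)–(4): making precise that for these particular sheaves the passage from the adic space to its diamond does not change étale cohomology, uniformly over the inverse system, and dealing cleanly with the almost-mathematics bookkeeping for $\O^+/\varpi$. The geometric inputs — spreading out qc étale maps, spatiality, and the colimit formula — are standard once set up, but the sheaf-comparison is where all the actual content (and the reason only $\O^+/\varpi$ and $\bOx$, not $\O^\times$ itself, appear in the statement) is concentrated.
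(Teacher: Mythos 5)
There is a genuine gap, and it sits exactly where you deposit all the work: the appeal to \cite[Proposition~14.9]{etale-cohomology-of-diamonds} in step (5) does not apply to $F=\O^+/\varpi$ or $F=\bOx$. That proposition (like the underlying SGA~4 VI.8.7.3 formalism) computes $H^n_{\et}$ of the limit for a sheaf that is the topos-theoretic \emph{pullback of a sheaf from a fixed finite stage} $U_i$ of the system. The sheaves $\O^+/\varpi$ and $\bOx$ on $U$ are not pullbacks from any finite level; for such "structure-type" sheaves the natural map $\varinjlim_j F(V_j)\to F(V)$ on a basis of qcqs étale opens is exactly what has to be proven before any limit formalism can be invoked, and it is false for closely related sheaves: the paper notes that the statement already fails in degree $n=0$ for $\O^+$ and $\O^\times$. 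So no soft argument of the kind you propose (diamondify, note $\bOx$ is a $v$-sheaf, quote the limit theorem) can distinguish $\O^+/\varpi$ and $\bOx$ from $\O^+$ and $\O^\times$; being a $v$-sheaf on $\Perf_K$ is irrelevant to the hypothesis of Proposition~14.9. Your steps (1)--(2) (spreading out, $U=\varprojlim U_j$ as diamonds, spatiality) and the reduction of $n\geq 0$ to $n=0$ via a limit-topos lemma are indeed in the paper, but they are the easy part; and your worry in steps (3)--(4), that passing from the adic space to its diamond might change étale cohomology, is not the real obstruction (the étale sites agree by \cite[Lemma~15.6]{etale-cohomology-of-diamonds}).

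What the paper actually does, and what your proposal is missing, is a hands-on approximation of \emph{sections}: using the density hypothesis $Y\approx\varprojlim Y_i$ and a spectral-space spreading-out lemma it shows $\varinjlim_i\O^+(Y_i)/\varpi\isomarrow\O^+(Y)/\varpi$ and likewise for $\O^\times/U$; then for perfectoid $X$ it proves the analogous statement for $X\times Y_i$ via almost mathematics ($\O^+(X\times Y)/\varpi\aeq\O^+(X)\otimes\O^+(Y)/\varpi$ plus a torsion-freeness bookkeeping lemma), and then propagates it along compositions of rational open immersions and finite étale maps by explicitly approximating the defining parameters $f_1,\dots,f_n,g$ at finite levels (Kedlaya--Liu for the finite étale case). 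Only after establishing this colimit formula on a basis does the limit-topos lemma give all $n\geq 0$. Finally, the smooth rigid case of $X$ is not handled by a formal diamondification remark but by comparing $\O^+_{\et}/\varpi$ with $\nu_{\ast}(\O^+_v/\varpi)$ on $X\times Y$ via local pro-finite-étale perfectoid covers and then descending from a qcqs perfectoid $v$-cover of $X$. If you want to salvage your outline, replace step (5) by a proof of the section-level isomorphism $F(V)=\varinjlim_j F(V_j)$ for a basis of the étale site of $U$ for these two specific sheaves; that is where the content of the proposition lies.
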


\begin{Remark}\label{rm:XxY-rep}
	\begin{itemize}
		\item A priori, the fibre product $X\times Y_i$ is in the category of diamonds over $\Spd(K,K^+)$. But since $Y_i$ is smooth, this is represented by a sousperfectoid adic space in the sense of Hansen--Kedlaya \cite[\S6.3]{ScholzeBerkeleyLectureNotes}: the fibre product of $X$ and $Y_i$ over $\Spa(K)$ in the category of uniform adic spaces.
	\item The analogue of the proposition for $F= \O^+$ and $\O^\times$ fails already for $n=0$.
	\item With some more work, the assumption of the Proposition can be weakened, e.g.\ it also holds in characteristic $p$:	 The proof for perfectoid $X$ works without changes. For rigid $X$, one can then use local sections of Frobenius to descend from the perfection. 
	\end{itemize}
\end{Remark}
\begin{proof}	
	The proof will be completed by a series of lemmas. We start with an easy observation:
\begin{Lemma}
	In the situation of Proposition~\ref{p:rigid-approxi-for-bOx}, we have:
	\begin{enumerate}
	\item $U=\varprojlim_{j\geq i} U_j$ as diamonds.
	\item If $U\to X\times Y$ is an \'etale cover, then so is $U_j\to X\times Y_j$ for $j\gg i$.
	\end{enumerate}
\end{Lemma}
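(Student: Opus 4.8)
The plan is to reduce both parts to a single diamond-theoretic limit identification and then to invoke standard facts about cofiltered limits of spectral spaces. Since every $Y_i$, as well as $Y$, is affinoid (this is built into the relation $\approx$), the density of $\varinjlim_i\O(Y_i)\to\O(Y)$ together with $|Y|=\varprojlim_i|Y_i|$ identifies $Y^\diamond$ with $\varprojlim_i Y_i^\diamond$ as $v$-sheaves on $\Perf_K$: a map to $Y$ from an affinoid perfectoid $T=\Spa(R,R^+)$ is a continuous $K$-algebra homomorphism $\O(Y)\to R^\sharp$ with the evident condition on the subrings of integral elements, and by density and completeness of $R^\sharp$ this is the same datum as a compatible system of such homomorphisms out of the $\O(Y_i)$, i.e.\ a point of $(\varprojlim_i Y_i^\diamond)(T)$; this is the tilde-limit formalism of \cite[(2.4.1)]{huber2013etale}, \cite[\S2.4]{ScholzeWeinstein}. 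Using Remark~\ref{rm:XxY-rep} to identify each $X\times Y_i$ with the (sous)perfectoid adic space $X\times_{\Spa(K)}Y_i$, and using that fibre products commute with inverse limits, we obtain $X\times Y=\varprojlim_{j\geq i}X\times Y_j$ as diamonds; applying the same principle once more to the fibre product defining $U_j$ yields part 1, $U=U_i\times_{X\times Y_i}(X\times Y)=\varprojlim_{j\geq i}U_j$.

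For part 2, each $U_j\to X\times Y_j$ is qc étale, being a base change of $U_i\to X\times Y_i$, so the only point is surjectivity for $j$ large. As $X$ is qcqs and each $Y_j$ is affinoid, $X\times Y_j$ is a spatial diamond and $U_j$ is qcqs and étale over it; hence its image $W_j:=\im(|U_j|\to|X\times Y_j|)$ is a quasi-compact open (étale maps of locally spatial diamonds are open), and $C_j:=|X\times Y_j|\setminus W_j$ is a closed pro-constructible, hence quasi-compact, subspace. By part 1 and \cite[Proposition~11.23]{etale-cohomology-of-diamonds} we have $|X\times Y|=\varprojlim_j|X\times Y_j|$ as spectral spaces, and the transition maps $q_{j'j}$ are spectral; moreover the surjection $|U_{j'}|\to|U_j|\times_{|X\times Y_j|}|X\times Y_{j'}|$ gives $W_{j'}=q_{j'j}^{-1}(W_j)$, so that the $C_j$ form a Cartesian inverse system of quasi-compact spectral spaces along spectral maps. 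Because $U\to X\times Y$ is surjective, no point of $\varprojlim_j|X\times Y_j|$ lies in $C_j$ for all $j$, i.e.\ $\varprojlim_j C_j=\emptyset$; by the compactness lemma for cofiltered limits of nonempty quasi-compact spectral spaces (\cite[Lemma~6.13]{perfectoid-spaces}, cf.\ \cite[0A2W]{StacksProject}) some $C_j$ is empty, whence $C_{j'}=q_{j'j}^{-1}(C_j)=\emptyset$ for all $j'\geq j$. Thus $U_{j'}\to X\times Y_{j'}$ is a surjective qc étale map, i.e.\ an étale cover, for every $j'\geq j$.

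As the statement is flagged as an easy observation, there is no genuine obstacle: everything is formal once one has the limit identification of part 1 and the compactness properties of spectral spaces. The points that require a little care are the identification $Y^\diamond=\varprojlim_i Y_i^\diamond$ — where one must use that all $Y_i$ are affinoid and invoke the precise tilde-limit statement — and, for part 2, verifying that the $W_j$ are quasi-compact opens with $W_{j'}=q_{j'j}^{-1}(W_j)$, so that the complements $C_j$ form the Cartesian spectral system to which the compactness lemma applies.
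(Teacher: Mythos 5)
Your argument is correct and follows essentially the same route as the paper: part 1 is the tilde-limit identification $Y^\diamond=\varprojlim Y_i^\diamond$ (which the paper simply cites as \cite[Proposition~2.4.5]{ScholzeWeinstein}, where you sketch the density argument) combined with the fact that limits commute with fibre products, and part 2 reduces surjectivity for $j\gg i$ to openness of \'etale maps plus the spectral-space compactness statement \cite[Lemma~6.13]{perfectoid-spaces}/\cite[0A2W]{StacksProject}, exactly as in Lemma~\ref{l:covers-come-are-covers-for-J>>I}, whose proof you have merely inlined via the complements $C_j$.
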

	\begin{proof}
		We have $Y^\diamond=\varprojlim_{i\in I} Y_i^\diamond$ by \cite[Proposition~2.4.5]{ScholzeWeinstein}. Part (i) follows since limits commute with fibre product.
	
	Part 2 follows from 1 due to the qcqs assumption: Namely, since $U_j\to X\times Y_j$ is \'etale, it is open \cite[Proposition 1.7.8]{huber2013etale}, so we can without loss of generality replace $U_j$ by its quasi-compact open image. The statement now follows from the following Lemma.
	\end{proof}

	\begin{Lemma}[{\cite[Lemma~6.13.(iv)]{perfectoid-spaces}}]\label{l:covers-come-are-covers-for-J>>I}
		Let $T=\varprojlim T_i$ be a cofiltered inverse limit of spectral spaces with spectral transition maps. Let $U\subseteq T_i$ be a quasi-compact open such that $T\to T_i$ factors through $U$. Then already some $T_j\to T_i$ factors through $U$.
	\end{Lemma}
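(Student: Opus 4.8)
The plan is to prove the contrapositive by a compactness argument after passing to the constructible topology. Put $Z:=T_i\setminus U$. Since $U$ is quasi-compact open, $Z$ is a closed subset of the spectral space $T_i$, hence itself spectral for the subspace topology, and the inclusion $Z\hookrightarrow T_i$ is spectral. Restricting the index category to the cofinal (again cofiltered) subsystem of those $j$ admitting a transition map $f_{ji}\colon T_j\to T_i$, and writing $f_i\colon T\to T_i$ for the projection, set $Z_j:=f_{ji}^{-1}(Z)$, a closed — hence spectral — subspace of $T_j$. Because $f_{ji}$ is spectral and the quasi-compact opens of the closed subspace $Z_j$ are precisely the sets $W\cap Z_j$ with $W$ quasi-compact open in $T_j$, the transition maps restrict to spectral maps $Z_{j'}\to Z_j$ for $j'\geq j$; so $(Z_j)_j$ is once more a cofiltered inverse system of spectral spaces with spectral transition maps.

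Next I would identify the limit. The hypothesis that $f_i$ factors through $U$ says exactly $f_i(T)\cap Z=\emptyset$, i.e. $f_i^{-1}(Z)=\emptyset$. On the other hand, a point $t\in T=\varprojlim_j T_j$ is a compatible family $(t_j)$ with $t_i=f_{ji}(t_j)$, so $f_i(t)\in Z$ if and only if $t_j\in Z_j$ for one (equivalently every) index $j\geq i$. Hence, as sets,
\[ f_i^{-1}(Z)=\varprojlim_{j\geq i}Z_j=\emptyset. \]

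Finally I would run the compactness argument. Equip each $Z_j$ with its constructible (patch) topology; this makes $Z_j$ a profinite, in particular compact Hausdorff, space, and the spectral transition maps are continuous for it, so $(Z_j^{\mathrm{cons}})_j$ is a cofiltered inverse system of compact Hausdorff spaces whose limit has underlying set $\varprojlim_{j\geq i}Z_j=\emptyset$. Since a cofiltered inverse limit of nonempty compact Hausdorff spaces is nonempty (by the finite-intersection property), some $Z_{j_0}$ must already be empty; unwinding, this says $f_{j_0i}(T_{j_0})\cap Z=\emptyset$, i.e. $f_{j_0i}$ factors through $U$, which is the claim. The only point needing a little care is checking that the restricted transition maps $Z_{j'}\to Z_j$ stay spectral, which is what lets one pass compatibly to the constructible topology; everything else is the standard compactness of cofiltered limits of profinite sets, and one could instead simply invoke \cite[Lemma~6.13.(iv)]{perfectoid-spaces}.
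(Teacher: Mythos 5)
Your proof is correct and follows essentially the same route as the paper: pass to the complements $Z_j=T_j\setminus f_{ji}^{-1}(U)$, observe that the hypothesis forces $\varprojlim Z_j=\emptyset$, and conclude that some $Z_{j_0}$ is already empty. The only difference is that where the paper simply cites \cite[0A2W]{StacksProject} for the nonemptiness of cofiltered limits of nonempty spectral spaces, you reprove that statement via the constructible topology, which is exactly the standard argument behind that reference.
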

\begin{proof}
	For any $q:T_j\to T_i$ in the inverse system, set $Z_j:=T_j\backslash q^{-1}(U)$. Then the assumptions imply $\varprojlim Z_j=\emptyset$. The desired result now follows from  \cite[0A2W]{StacksProject}.
\end{proof}
	
	In order to continue, we need in the following several subcategories of the \'etale site: 
	\begin{Definition}
		\begin{enumerate}
			\item Let $Z$ be a locally spatial diamond over $K$. Let $Z_{\etqcqs}\subseteq Z_{\et}$ be the full subcategory of quasi-compact quasi-separated \'etale morphisms $U\to Z$. For any adic space $Z$, this also defines $Z_{\etqcqs}$  via the identification $Z_\et=Z_\et^\diamond$.
			\item 	
			If $Z$ is an affinoid adic space over $K$, let $Z_{\stdet}\subseteq Z_{\et}$ be the full subcategory of objects $Z'\to Z$ which are successive compositions of rational open immersions and finite \'etale maps. We call such maps standard-\'etale. By \cite[Lemmas~11.31 and 15.6]{etale-cohomology-of-diamonds}, these form a basis of $Z_{\et}$. Note that we have $Z_{\stdet} \subseteq Z_{\etqcqs}\subseteq Z_{\et}$.
		\end{enumerate}
	
	\end{Definition}
	
	Next, we explain that in order to prove Proposition~\ref{p:rigid-approxi-for-bOx} for all $n\geq 0$, we can reduce to the case of $n=0$. We first note that  for $n=0$, the statement is the following:
	\begin{claim}\label{cl:approx-O^+/p-for-U-sheaf-version}
		In the situation of \Cref{p:rigid-approxi-for-bOx}, for any $U_i\in (X\times Y_i)_{\etqcqs}$ with pullbacks $U_j\to X\times Y_j$ and $U\to X\times Y$, we have
		\[\bOx(U)= \varinjlim_{j\geq i} \bOx(U_j),\quad\quad\O^+/\varpi(U)= \varinjlim_{j\geq i} \O^+/\varpi(U_j) .\]
	\end{claim}
	Suppose that Claim~\ref{cl:approx-O^+/p-for-U-sheaf-version} holds true. Then the case of $n\geq 0$ follows from very general results on cohomology in inverse limit topoi:
	\begin{Lemma}\label{l:cohom-of-limit-topos}
		Let $Z=\varprojlim_{i\in I} Z_i$ be a cofiltered inverse limit of spatial diamonds over $\Spa(K,K^+)$. Let $F$ be an abelian sheaf on $\LSD_{K,\et}$. Assume that for all $i\in I$ and $U_i\in Z_{i,\etqcqs}$ with pullbacks $U_j=U_i\times_{Z_i}Z_j$ and $U=U_i\times_{Z_i}Z_\infty$ we have 
		\[ F(U)=\varinjlim_{j\geq i} F(U_j).\]
		Then for all $n\geq 0$,
		\[H^n(U,F)=\varinjlim_{j\geq i} H^n(U_j,F).\]
	\end{Lemma}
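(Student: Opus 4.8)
The plan is to bootstrap the degree-zero hypothesis to all degrees, following the standard template for computing cohomology in a cofiltered limit of coherent topoi. First I would pass to the quasi-compact quasi-separated \'etale sites: every \'etale map to a spatial diamond is refined by a qcqs one, so cohomology is unchanged, and by \cite[Proposition~11.23]{etale-cohomology-of-diamonds} together with \cref{l:covers-come-are-covers-for-J>>I} one has an equivalence of sites $Z_{\etqcqs}=\twolim_i Z_{i,\etqcqs}$ (the transition maps being qcqs, as holds in our applications). Fix the presentation $U=U_i\times_{Z_i}Z$ with $U_i$ qcqs \'etale over $Z_i$, write $\rho_{ij}:Z_j\to Z_i$ and $\rho_i:Z\to Z_i$ for the projections and $U_j:=U_i\times_{Z_i}Z_j$, and set $F_i:=F|_{Z_{i,\etqcqs}}$.

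The key step is to reinterpret the hypothesis sheaf-theoretically. Functoriality of $F$ on the big \'etale site gives canonical, composition-compatible maps $\rho_{ij}^{-1}F_i\to F_j$ and $\rho_i^{-1}F_i\to F|_{Z_{\etqcqs}}$, and I claim the hypothesis is equivalent to the statement that the resulting map of abelian sheaves on $Z_{\etqcqs}$
\[ \varinjlim_i \rho_i^{-1}F_i\ \longrightarrow\ F|_{Z_{\etqcqs}} \]
is an isomorphism (the colimit being filtered, indexed by $I$). Since qcqs objects generate, I would verify this on sections over $U$: coherence of $Z_{\etqcqs}$ lets me compute the left side at $U$ as $\varinjlim_i (\rho_i^{-1}F_i)(U)$, while \cite[Proposition~14.9]{etale-cohomology-of-diamonds}, applied to the sheaf $F_i$ on $Z_{i,\etqcqs}$ and the cofiltered system $(Z_k)_{k\geq i}$ with limit $Z$, gives $(\rho_i^{-1}F_i)(U)=\varinjlim_{k\geq i}(\rho_{ik}^{-1}F_i)(U_k)$ in degree $0$. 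Combining these into a single filtered colimit over the poset of pairs $\{(i,k):i\leq k\}$, on which the diagonal $\{(k,k)\}$ is cofinal, collapses everything to $\varinjlim_k (\rho_{kk}^{-1}F_k)(U_k)=\varinjlim_k F(U_k)$, which by hypothesis is $F(U)$.

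Granting this, the cohomology assertion is formal. For fixed $n\geq0$, since $H^n(U,-)$ commutes with filtered colimits of abelian sheaves (coherence of $Z_{\etqcqs}$, $U$ quasi-compact) and \cite[Proposition~14.9]{etale-cohomology-of-diamonds} holds in all degrees, I would compute
\[ H^n(U,F)=H^n\!\big(U,\varinjlim_i\rho_i^{-1}F_i\big)=\varinjlim_i H^n\!\big(U,\rho_i^{-1}F_i\big)=\varinjlim_i\varinjlim_{k\geq i}H^n\!\big(U_k,\rho_{ik}^{-1}F_i\big), \]
and the same passage to the cofinal diagonal turns this into $\varinjlim_k H^n(U_k,\rho_{kk}^{-1}F_k)=\varinjlim_k H^n(U_k,F)$, as claimed.

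I expect nearly all the content to sit in the middle step, i.e.\ in upgrading the \emph{sectionwise} colimit hypothesis to the \emph{sheaf-level} identity $F|_{Z_{\etqcqs}}=\varinjlim_i\rho_i^{-1}F_i$; this is the only place the hypothesis enters, and it genuinely must, since for sheaves like $\O^+$ or $\O^\times$ — which fail the hypothesis already in degree $0$ (cf.\ \cref{rm:XxY-rep}) — that map is not an isomorphism and the conclusion of the Lemma is false. The remaining work is routine bookkeeping with iterated filtered colimits, streamlined by the observation that the relevant index categories of pairs always contain a cofinal diagonal.
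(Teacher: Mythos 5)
Your proposal is correct and takes essentially the same route as the paper: pass to the equivalence $Z_{\etqcqs}=\twolim_i (Z_i)_{\etqcqs}$ from \cite[Proposition~11.23]{etale-cohomology-of-diamonds}, upgrade the degree-zero hypothesis to the sheaf-level identity $F|_{Z}=\varinjlim_i \mu_i^{-1}F_{Z_i}$, and then deduce the statement in all degrees by the standard limit-topos cohomology argument. The only difference is that you unpack by hand, via \cite[Proposition~14.9]{etale-cohomology-of-diamonds}, commutation of cohomology of a qcqs object with filtered colimits, and cofinality of the diagonal, what the paper delegates to \cite[09YN, 09YP]{StacksProject} and SGA~4.
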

	\begin{proof}
	Since the $Z_i$ are spatial, we have by \cite[Proposition~11.23]{etale-cohomology-of-diamonds} an equivalence of sites $Z_{\etqcqs}=\twolim_iZ_{i,\etqcqs}$. Write $\mu_j:Z\to Z_j$ for the natural projection, then by \cite[09YN]{StacksProject} our assumptions imply that $F_{Z}=\varinjlim_{j\geq i} \mu_{j}^{-1}F_{Z_j}$, from which the statement follows formally by \cite[VI Th\'eor\`eme~8.7.3]{SGA4}  or \cite[09YP]{StacksProject}. In fact, we will later only need the case  $n=1$, in which case this is a simple \cH\ argument.
	\end{proof}
	We have thus reduced \cref{p:rigid-approxi-for-bOx} to Claim~\ref{cl:approx-O^+/p-for-U-sheaf-version}. We now first prove Claim~\ref{cl:approx-O^+/p-for-U-sheaf-version} for qcqs perfectoid $X$. In this case, we can further reduce the claim to the following statement:
	\begin{claim}\label{cl:approx-O^+/p-for-U}
		In the situation of Proposition~\ref{p:rigid-approxi-for-bOx},
		 assume further that $X$ is affinoid perfectoid. Then for any $U_i\in (X\times Y_i)_{\stdet}$ with pullbacks $U_j\in (X\times Y_j)_{\stdet}$ for $j\geq i$ and $U\in (X\times Y)_{\stdet}$, we have		\[\O^\times(U)/\Oone(U)= \varinjlim_{j\geq i} \O^\times(U_j)/\Oone(U_j),\quad \quad
		\O^+(U)/\varpi= \varinjlim_{j\geq i} \O^+(U_j)/\varpi.\]

	\end{claim}
	Indeed, suppose we know Claim~\ref{cl:approx-O^+/p-for-U}. Then using the equivalence of sites \[(X\times Y)_{\stdet}=\twolim_i(X\times Y_i)_{\stdet}\]
	and the fact that $(X\times Y)_{\stdet}$ is a basis for $(X\times Y)_{\et}$, it follows upon sheafification that
	\[\bOx(U)= \varinjlim_{j\geq i} \bOx(U_j),\quad \O^+/\varpi(U)= \varinjlim_{j\geq i} \O^+/\varpi(U_j).\]
	More generally, we now also obtain these last two equalities if $U_i\in (X\times Y_i)_{\etqcqs}$, because any such $U_i$ can be covered by finitely many objects of  $(X\times Y_i)_{\stdet}$ and their intersections are again covered by finitely many objects of  $(X\times Y_i)_{\stdet}$ due the qcqs assumption.
	
	In a second step, this now implies Claim~\ref{cl:approx-O^+/p-for-U-sheaf-version} for qcqs perfectoid $X$, by applying the same covering argument to a finite cover of $X$ by affinoid perfectoid subspaces.
	
	\medskip
	
	We now prove Claim~\ref{cl:approx-O^+/p-for-U} step by step. We first treat the case that $X=\Spa(K,K^+)$ where $(K,K^+)$ is a perfectoid field. In fact, for the following discussion until \Cref{l:part-2-for-fet-on-Y} inclusively, we can allow the greater generality that  $(K,K^+)$ is any non-archimedean field of residue characteristic $p$. We fix a pseudo-uniformiser $0\neq \varpi\in \m$.

\begin{Lemma}\label{l:dense-image-implies-isomorphism-on-O^+/varpi}
	Let $(Y_i)_{i\in I}$ be a cofiltered inverse system of affinoid adic spaces over $K$ with an affinoid tilde-limit $Y\approx \varprojlim Y_i$. Then the following maps are isomorphisms:
	\[ \varinjlim_i \O^\times(Y_i)/\Oone(Y_i)\to \O^\times(Y)/\Oone(Y),\quad\quad\varinjlim_i \O^+(Y_i)/\varpi\to \O^+(Y)/\varpi.\]
\end{Lemma}
\begin{proof}
	Let $f\in \O^+(Y)$. Then we approximate $f$ by some $f_i\in \O(Y_i)$ whose image in $\O(Y)$ satisfies $|f_i-f|\leq |\varpi|$. In particular, we then have $f_i\in \O^+(Y)$. The condition $|f_i|\leq 1$ defines a quasi-compact open subspace $U$ of $Y_i$ through which $Y\to Y_i$ factors. We may apply  Lemma~\ref{l:covers-come-are-covers-for-J>>I} to this situation since any morphism between affinoid analytic adic spaces is spectral. Consequently, there is $j\geq i$ such that $Y_j\to Y_i$ factors through $U$, which means that the image $f_j$ of $f_i$ in $\O(Y_j)$ is already in  $\O^+(Y_j)$. This shows surjectivity.
	
	Injectivity follows by a similar argument: If $f_i\in \O^+(Y_i)$ is such that its image $f\in \O^+(Y)$ is already in $\varpi\O^+(Y)$, then some $Y_j\to Y_i$ factors through the quasi-compact open defined by $|f_i|\leq |\varpi|$ because $Y\to Y_i$ does. Thus $f_i$ goes to $0\in \O^+(Y_j)/\varpi$.
	
	The proof for $\O^\times$ is similar as the proof of \cite[Lemma~2.17]{heuer-v_lb_rigid} goes through verbatim: To see injectivity, let $g_i\in \Oone(Y_i)$ be in the kernel. Then since $Y$ is quasi-compact, there is $\epsilon>0$ such that $|g_i-1|\leq |\varpi|^\epsilon$ on $Y$. By Lemma~\ref{l:covers-come-are-covers-for-J>>I} we have $g_i\in \Oone(Y_j)$ for some $j\gg i$.
	
	To see that the map is surjective, let $f\in \O^\times(Y)$. By assumption, we can find approximating sequences $f_i\to f$ and $f'_i\to f^{-1}$ with $f_i,f_i'\in \O(Y_i)$. Then $f_if'_i\to 1$ and thus $f_if'_i\in \Oone(Y)$ for $i\gg 0$. By the above argument, it follows that $f_if'_i\in \Oone(Y_j)$ for some $j\geq i$. But then $f_i\in \O^\times(Y_j)$, which implies that $f_i$ is in the image of the map.
\end{proof}
	According to \cref{l:dense-image-implies-isomorphism-on-O^+/varpi},
	in order to prove \cref{cl:approx-O^+/p-for-U}, it suffices to prove that $U\approx \varprojlim U_i$. We now first prove this when $U_i=X\times Y_i$. Recall that $X$ is affinoid perfectoid in \cref{cl:approx-O^+/p-for-U}.
	
	\begin{Lemma}\label{l:part-2-for-XxY}
		Let $X$ and $Y$ be affinoid perfectoid spaces and assume we have a tilde-limit $Y\approx \varprojlim Y_i$ for some smooth affinoid rigid spaces $Y_i$ over $K$. Then $X\times Y\approx \varprojlim X\times Y_i$.
	\end{Lemma}

	\begin{proof}
		Since $X$ and $Y$ are affinoid perfectoid, we have
		\[\O^+(X\times Y)/\varpi\aeq \O^+(X)\otimes_{K^+}\O^+(Y)/\varpi.\]
		The diamond $X\times Y_i$ is represented by an affinoid adic space (cf \cref{rm:XxY-rep}): This is defined by the Huber pair $(B_i[1/\varpi],B_i)$ given by setting
		$A_i:=\O^+(X)\hotimes\O^+(Y_i)$
		and defining $B_i$ to be the integral closure of the image of $A_i$ in $A_i[1/\varpi]$. 
		Consider now the composition
		\[ \varinjlim A_i\to \varinjlim B_i\to \O^+(X\times Y)\aeq \O^+(X)\hotimes_{K^+}\O^+(Y).\]
		Here the second map is $\O^+$ evaluated on $X\times Y\to X\times Y_i$. We wish to see that this second map is an almost isomorphism mod $\varpi$. This will imply that $X\times Y\approx \varprojlim_iX\times Y_i$.
		
		By Lemma~\ref{l:dense-image-implies-isomorphism-on-O^+/varpi}, the assumptions imply that $\O^+(Y)/\varpi=\varinjlim \O^+(Y_i)/\varpi$, hence the above composition is an almost isomorphism mod $\varpi$. The statement now follows formally: Let us axiomatise the argument for later reference.
	\begin{Lemma}\label{l:formal-argument-for-isomorphism}
		Let $A\xrightarrow{f} B \xrightarrow{g} C$ be morphisms of $\O_K^a$-modules such that
		\begin{itemize}
			\item $B$ is $\varpi$-torsionfree
			\item $f[1/\varpi]$ and $(g\circ f)/\varpi$ are both isomorphisms.
		\end{itemize}
		Then $g/\varpi$ is an isomorphism.
	\end{Lemma}
	\begin{proof}
		Let $T\subseteq A$ be the $\varpi$-power torsion submodule, then $T/\varpi\hookrightarrow A/\varpi$ is injective.
		The map $f/\varpi$ is almost injective since $(g\circ f)/\varpi$ is. On the other hand, $T\to B$ is trivial since $B$ is $\varpi$-torsionfree. This implies that $T/\varpi=0$. In particular, we have $(A/T)/\varpi=A/\varpi$. We also have  $(A/T)[1/\varpi]=A[1/\varpi]$. We may thus replace $A$ by $A/T$ and assume without loss of generality that $A$ is $\varpi$-torsionfree. In particular, $f$ is then injective since $f[1/\varpi]$ is.
		
		In this situation, the cokernel of $f$ is $\varpi$-power torsion (since $f[1/\varpi]$ is an isomorphism) but also $\varpi$-torsionfree (since $f/\varpi$ is injective). Consequently, $f$ is an isomorphism, thus $f/\varpi$ is an isomorphism, and hence so is $g/\varpi$ given that $(g\circ f)/\varpi$ is.
	\end{proof}
	Applying \cref{l:formal-argument-for-isomorphism} to the given sequence
	finishes the proof of Lemma~\ref{l:part-2-for-XxY}.
\end{proof}
	Finally, in order to prove \Cref{cl:approx-O^+/p-for-U}, it remains to add a standard-\'etale map on top of $X\times Y_i$. Setting $Z:=X\times Y$ and $Z_i:=X\times Y_i$ to simplify notation, we wish to see:
	\begin{Lemma}\label{l:part-2-for-fet-on-Y}
		Let $Z\approx \varprojlim Z_i$ be an affinoid perfectoid tilde-limit of affinoid adic spaces $Z_i$ over $K$. Let $U_i\to Z_i$ be an object of $Z_{i,\stdet}$. For $j\geq i$ write $U_j:=U_i\times_{Z_i}Z_j$ and   $U:=U_i\times_{Z_i}Z$. If all of these are adic spaces, then $U\approx \varprojlim_{j\geq i} U_j$.
	\end{Lemma}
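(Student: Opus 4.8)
The plan is to reduce the statement, following the same strategy as before, to an application of Lemma~\ref{l:dense-image-implies-isomorphism-on-O^+/varpi}. Concretely, I would first establish the purely ring-theoretic assertion that
\[\varinjlim_{j\geq i}\O(U_j)\longrightarrow \O(U)\]
has dense image, i.e.\ that $U\approx \varprojlim_{j\geq i}U_j$. Granting this, one concludes as follows: since $U_i\to Z_i$ is a successive composition of rational open immersions and finite \'etale maps, the same is true of $U\to Z$ and of each $U_j\to Z_j$ after base change; in particular $U$ is again an affinoid adic space (indeed affinoid perfectoid, using that $Z$ is) and, by hypothesis, each $U_j$ is an affinoid adic space over $K$, so that $(U_j)_{j\geq i}$ is a cofiltered inverse system with $\approx$-limit $U$. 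Lemma~\ref{l:dense-image-implies-isomorphism-on-O^+/varpi} then yields at once that $\varinjlim\O^+(U_j)/\varpi\to \O^+(U)/\varpi$ is an isomorphism (and likewise for $\O^\times/U$, should it be needed later).

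For the density statement I would argue by induction along the chain defining the std-\'etale map. Write
\[U_i=V_i^{(n)}\to V_i^{(n-1)}\to\cdots\to V_i^{(0)}=Z_i,\]
each $V_i^{(k+1)}\to V_i^{(k)}$ being a rational open immersion or a finite \'etale map, and let $V_j^{(k)}$ and $V^{(k)}$ denote the base changes along $Z_j\to Z_i$ and $Z\to Z_i$. At the level of Huber-pair rings these are defined recursively (rational localisation, resp.\ finite \'etale base change) starting from $\O(Z_j)=A_j$ and $\O(Z)=A_\infty$, so they make sense regardless of sheafiness of the intermediate terms. The claim is that $\varinjlim_{j\geq i}\O(V_j^{(k)})\to \O(V^{(k)})$ has dense image for every $k$; for $k=0$ this is exactly the hypothesis $Z\approx\varprojlim Z_i$, restricted to the cofinal subsystem of indices $j\geq i$.

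For the inductive step put $A_j:=\O(V_j^{(k)})$ and $A_\infty:=\O(V^{(k)})$, so $\varinjlim A_j\to A_\infty$ has dense image by the inductive hypothesis. If $V_i^{(k+1)}\to V_i^{(k)}$ is the rational subset attached to $f_1,\dots,f_r,g$, then $\O(V_j^{(k+1)})$ and $\O(V^{(k+1)})$ are the quotients of $A_j\langle T_1,\dots,T_r\rangle$, resp.\ $A_\infty\langle T_1,\dots,T_r\rangle$, by the closed ideal generated by the $gT_l-f_l$; since polynomials with coefficients in $\varinjlim A_j$ are dense in $A_\infty\langle T_1,\dots,T_r\rangle$ and the quotient map onto the rational localisation is open, $\varinjlim\O(V_j^{(k+1)})\to\O(V^{(k+1)})$ has dense image. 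If instead $V_i^{(k+1)}\to V_i^{(k)}$ is finite \'etale, then $\O(V_i^{(k+1)})$ is a finite $\O(V_i^{(k)})$-module, say generated by $b_1,\dots,b_m$; no completion is then needed in the base change, so $\O(V_j^{(k+1)})=\sum_s A_j b_s$ and $\O(V^{(k+1)})=\sum_s A_\infty b_s$ carry the module topology, and approximating the coefficients of an element $\sum_s c_s b_s\in\O(V^{(k+1)})$ by elements of $\varinjlim A_j$ gives density once more. This completes the induction.

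I expect the only step with genuine content to be the density propagation, and within it the rational-localisation case, where one must keep track of the fact that a rational localisation carries the quotient topology from the Tate algebra (so that density of the coefficient ring really does propagate); the finite \'etale case is immediate since everything in sight is module-finite, and the final reduction to Lemma~\ref{l:dense-image-implies-isomorphism-on-O^+/varpi} is formal. Note that perfectoidness of $Z$, and the hypothesis that the $U_j$ and $U$ are adic spaces, enter only at the two endpoints of the argument — to know $U$ is affinoid so that Lemma~\ref{l:dense-image-implies-isomorphism-on-O^+/varpi} applies — and play no role in the induction itself.
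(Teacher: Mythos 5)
Your overall strategy is sound, and for the finite \'etale steps it coincides with what the paper does: the paper also shows $R_j=R_i\otimes_{S_i}S_j$ and $R=R_i\otimes_{S_i}S$ (citing \cite[Lemma 15.6]{etale-cohomology-of-diamonds} and \cite[Lemma~8.2.17.(i)]{KedlayaLiu-rel-p-p-adic-Hodge-I} for the ``no completion needed'' point that you assert), deduces density by approximating simple tensors, and then invokes Lemma~\ref{l:dense-image-implies-isomorphism-on-O^+/varpi}. Where you genuinely diverge is the rational-open case: the paper does \emph{not} propagate density through the Tate-algebra presentation; instead it exploits perfectoidness of $Z$, writing $U=Z(f_1,\dots,f_n/g)$ with $f_l,g$ admitting $p$-power roots via \cite[Lemma 6.4]{perfectoid-spaces}, approximating those roots over the $Z_j$, producing explicit identifications of $\O^+(U)/\varpi$ with a colimit of polynomial algebras, and concluding with the formal Lemma~\ref{l:formal-argument-for-isomorphism}. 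Your route — density of $\varinjlim\O(Z_j)$ in $\O(Z)$ passes to $A_\infty\langle T_1,\dots,T_r\rangle$ and then along the continuous surjection onto the rational localisation (here one uses that $\varpi^N\in(f_1,\dots,f_r,g)$ makes $A[\tfrac{f}{g}]=A[\tfrac1g]$ dense, so the presentation really is surjective) — is more elementary, treats both kinds of steps uniformly by induction along the chain, and does not use perfectoidness of $Z$ at this point; the paper's computation is heavier but yields an explicit almost description of $\O^+(U)/\varpi$ of the kind it reuses in the surrounding arguments.

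One caveat you should repair: having dense image of $\varinjlim\O(U_j)\to\O(U)$ is \emph{not} the same as $U\approx\varprojlim U_j$. The relation $\approx$ also requires $|U|=\varprojlim|U_j|$, and this is actually used in the proof of Lemma~\ref{l:dense-image-implies-isomorphism-on-O^+/varpi}, which applies Lemma~\ref{l:covers-come-are-covers-for-J>>I} to the inverse limit of the $|U_j|$; without (at least) surjectivity of $|U|\to\varprojlim|U_j|$ that step breaks. So you must check the homeomorphism condition alongside your density induction. This is easy — for a rational open immersion $|U_j|$ is just the preimage of $|U_i|$ in $|Z_j|$, so the claim is immediate from $|Z|=\varprojlim|Z_j|$, and for finite \'etale maps it is \cite[Remark 2.4.3.(ii)]{huber2013etale}, exactly as the paper cites in its finite \'etale case — but as written your proposal conflates the two conditions, and the final reduction to Lemma~\ref{l:dense-image-implies-isomorphism-on-O^+/varpi} is only formal once this is supplied.
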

	\begin{proof}
	We can prove this separately in the cases of finite \'etale maps and rational localisation.
		\paragraph{Case 1: $U_i\to Z_i$ finite \'etale.}
		Write $S_j=\O(Z_j)$ and $S:=\O(Z)$.  Similarly, for any $j\geq i$, let  $R_j:=\O(U_j)$ and $R:=\O(U)$.
		By \cite[Lemma 15.6]{etale-cohomology-of-diamonds} and \cite[Lemma~8.2.17.(i)]{KedlayaLiu-rel-p-p-adic-Hodge-I}, the map $S_i\to R_i$ is finite \'etale and we have
	$R_j=R_i\otimes_{S_i}S_j$ and $R=R_i\otimes_{S_i}S$. To see that 	$\varinjlim_{j\geq i}R_j\to R$ has dense image, it now suffices to see that we can approximate simple tensors $r\otimes s$ in $R_i\otimes_{S_i}S=R$. For this we use that $\varinjlim \O(S_j)\to \O(S)$ has dense image to find a sequence $s_j\in S_j$ with $s_j\to s$. This implies $r\otimes s_j\to  r\otimes s$, showing the desired dense image property.
	By \cite[Remark 2.4.3.(ii)]{huber2013etale}, we also have $|U|=\varprojlim|U_j|$. Thus $U\approx \varprojlim U_j$.

	\paragraph{Case 2: $U_i\to Z_i$ a rational open immersion.}
	 Since $Z$ is affinoid perfectoid, we have $S^+:=\O^+(U)\aeq \O^\circ(U)=S^\circ$. It therefore suffices to prove that for all $n\in \N$,
	 \[ \varinjlim \O^+(U_j)/\varpi^n\to \O^\circ(U)/\varpi^n\]
	 is an almost isomorphism. Second, by \cite[Lemma 6.4]{perfectoid-spaces}, the rational open $U\subseteq Z$ is of the form $Z(f_1,\dots,f_n/g)$ for some $f_1,\dots,f_n,g\in \sharp(\O^{\flat+}(Z))$ with $f_n=\varpi^N$ for some $N$, and can be written as
	\[ U=\Spa(R,R^+) \quad \text{where} \quad R^+\aeq S^\circ\langle (f_1/g)^{1/p^\infty},\dots,(f_n/g)^{1/p^\infty}\rangle.\]
	
	Set $\epsilon:=|\varpi^N|$. Then for every $l\in \N$, we can find $j_l$ large enough such that there are $f_{1,l},\dots,f_{n,l},g_l$ in $\O(Z_{j_l})$ such that on $Z$ we have for all $i=1,\dots,n$:
	\[|f_{i,l}-f_i^{1/p^l}|\leq \epsilon \quad \text{ and }\quad |g_{l}-g^{1/p^l}|\leq \epsilon.\]
	 Then on $Z$, the conditions
	$|f_{i,l}|\leq |g_l|$ and $|f_{i}^{1/p^l}|\leq |g^{1/p^l}|$
	are equivalent, and thus cut out the same rational open subspace of $Z$. For any $j\geq j_l$, let \[U_{j}:=Z_{j}(f_{1,l},\dots,f_{n,l}/g_l),\]
	then this means that $U=Z\times_{Z_j}U_{j}$. Moreover, we have a natural isomorphism \[S^\circ\langle (f_i/g)^{1/p^l}\rangle/\varpi=S^\circ\langle f_{i,l}/g_l\rangle/\varpi\] given explicitly by
	\[ S^\circ/\varpi[T_1\dots,T_n]/(T_ig^{1/p^l}-f_i^{1/p^l})\isomarrow S^\circ/\varpi[T_1,\dots,T_n]/(T_ig_l-f_{i,l}),\]
	\[ T_i\mapsto \Big(1+\frac{g^{1/p^l}-g_l}{g_l}\Big)^{-1}\Big(T_i-\frac{f_{i,l}-f_i^{1/p^l}}{g_l}\Big).\]
	Under these compatible identifications, in the limit over $l$, it makes sense to write
	\[R^{\circ}/\varpi\aeq S^\circ/\varpi\Big[\tfrac{f_{i,l}}{g_l}\big|i=1,\dots,n \text{ and }l\in \N\Big].\]

	For fixed $l$, Lemma~\ref{l:covers-come-are-covers-for-J>>I} implies that for $j\gg j_l$ we have $f_{i,l},g_l\in \O^+(Z_{j})$. Let \[A_{j,l}:=\O^+(Z_{j})\langle f_{i,l}/g_{l}|i=1,\dots,n\rangle,\quad B_{j}:=\O^+(U_j).\]
	Explicity, $B_{j}$ is the integral closure of the image of $A_{j,l}$ in $A_{j,l}[\tfrac{1}{\varpi}]$. In particular, we have 
	\[B_j[1/\varpi]=A_{j,l}[1/\varpi].\]
		
	We now observe that by construction, for any fixed $l$, the map
	\[\varinjlim_{j\geq j_l} A_{j,l}/\varpi= \varinjlim_{j\geq j_l}(\O^+(Z_{j})/\varpi)[f_{i,l}/g_{l}]\to (S^\circ/\varpi)[f_{i,l}/g_{l}]\aeq (S^\circ/\varpi)[(f_i/g)^{1/p^l}]\]
	is an almost isomorphism since this was true before tensoring with $\O^+(Z_{j_l})[f_{i,l}/g_l]$. Taking the colimit over $l$, this shows that also 
	 \[\varinjlim_{l\in \N}\varinjlim_{j\geq j_l} A_{j,l}/\varpi\aeq (S^\circ/\varpi)[(f_i/g)^{1/p^\infty}]\aeq  \O^+(U)/\varpi\]
	 is an almost isomorphism. The desired statement now follows from Lemma~\ref{l:formal-argument-for-isomorphism} applied to the sequence 
	\[ \varinjlim_l \varinjlim_{j\geq j_l} A_{i,l}\to \varinjlim_j B_j\to \O^+(U)/\varpi.\]
	This finishes the proof of Lemma~\ref{l:part-2-for-fet-on-Y}.
	\end{proof}

Combining \cref{l:part-2-for-fet-on-Y} with \cref{l:dense-image-implies-isomorphism-on-O^+/varpi}, we  have thus proved Claim~\ref{cl:approx-O^+/p-for-U}, which finishes the proof of Proposition~\ref{p:rigid-approxi-for-bOx} for perfectoid $X$.

In order to deduce the case of smooth rigid $X$, it again suffices to prove Claim~\ref{cl:approx-O^+/p-for-U-sheaf-version} in this case. To this end, we return to our original setup that $K$ is a perfectoid field over $\Q_p$. We then have the following consequence of the perfectoid case:
\begin{Lemma}\label{l:bOx-et-vs-v-on-XxY}
	Let $X$ be a smooth rigid space and let $Y$ be affinoid perfectoid over $K$. Then on $X\times Y$, we have
	\[ \O^+_{\et}/\varpi \aeq \nu_{\ast}(\O^+_{v}/\varpi).\]
	In particular, this holds on any smooth rigid space. Similarly, $\bOx_{\et} = \nu_{\ast}\bOx_v$.
\end{Lemma}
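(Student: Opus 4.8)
The plan is to verify the almost-isomorphism on a basis of the \'etale site of $Z:=X\times Y$, and on each basis object to identify both sides with the same continuous group cohomology. Since the assertion is local on $Z_\et$ and $X$ is smooth, it suffices to treat $U$ an affinoid object of $Z_\et$ admitting an \'etale map $U\to\mathbb T^d\times Y$ to a rigid torus times $Y$; such $U$ form a basis. Adjoining $p$-power roots of the torus coordinates then produces a tower of finite \'etale Galois covers $U_n\to U$ with group $(\Z/p^n)^d$, each $U_n$ finite \'etale over $\mathbb T^d_n\times Y$, whose inverse limit $\widetilde U:=\varprojlim_n U_n=U\times_{\mathbb T^d}\widetilde{\mathbb T}^d$ is (after shrinking $U$) an affinoid perfectoid space and a pro-Galois $v$-cover of $U$ with profinite group $G\cong\Z_p(1)^d$, satisfying $\widetilde U\approx\varprojlim_n U_n$. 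All objects in sight are \'etale over the perfectoid space $\widetilde{\mathbb T}^d\times Y$, which is what makes them perfectoid.

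Next I would run two descent computations. On the $v$-side, $v$-descent along the pro-Galois $v$-cover $\widetilde U\to U$ for the $v$-sheaf $\O^+_v/\varpi$, using the identification $\widetilde U\times_U\widetilde U=\widetilde U\times\underline{G}$, yields
\[(\nu_\ast\O^+_v/\varpi)(U)=H^0_{\cts}\bigl(G,(\O^+_v/\varpi)(\widetilde U)\bigr).\]
On the \'etale side, finite \'etale Galois descent along $U_n\to U$ gives $(\O^+_\et/\varpi)(U)=H^0\bigl((\Z/p^n)^d,(\O^+_\et/\varpi)(U_n)\bigr)$ for every $n$; passing to the colimit over $n$, using that filtered colimits commute with the finite limits computing fixed points of a finite group, and applying Proposition~\ref{p:rigid-approxi-for-bOx} in the already-established perfectoid case --- with perfectoid factor $Y$, rigid inverse system $(\mathbb T^d_n)_n$ with perfectoid limit $\widetilde{\mathbb T}^d$, and the \'etale map $U\to\mathbb T^d\times Y$ --- to identify $\varinjlim_n(\O^+_\et/\varpi)(U_n)=(\O^+_\et/\varpi)(\widetilde U)$, I obtain
\[(\O^+_\et/\varpi)(U)=H^0_{\cts}\bigl(G,(\O^+_\et/\varpi)(\widetilde U)\bigr).\]

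To conclude, I would invoke the comparison on perfectoid spaces: for the affinoid perfectoid $\widetilde U$ one has $(\O^+_\et/\varpi)(\widetilde U)\aeq\O^+(\widetilde U)/\varpi\aeq(\O^+_v/\varpi)(\widetilde U)$ by the almost-vanishing of higher \'etale and $v$-cohomology of $\O^+/\varpi$ on affinoid perfectoid spaces (\cite[\S7]{perfectoid-spaces}, \cite[\S8]{etale-cohomology-of-diamonds}). This almost-isomorphism is $G$-equivariant, and $H^0_{\cts}(G,-)$ preserves almost-isomorphisms, so the two displays match, compatibly with the canonical map $\O^+_\et/\varpi\to\nu_\ast\O^+_v/\varpi$. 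Since this holds on a basis of $Z_\et$, the two sheaves are almost isomorphic. The case of $\bOx$ is identical, using that $\bOx$ also satisfies Proposition~\ref{p:rigid-approxi-for-bOx} and is already a $v$-sheaf on $\Perf_K$ by \cite[Lemma~2.17]{heuer-v_lb_rigid}. Finally, the statement ``on any smooth rigid space'' is the special case $Y=\Spa(K)$, which is legitimate since the perfectoid field $K$ defines an affinoid perfectoid space.

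The step I expect to be the main obstacle is the first one: producing, \'etale-locally on $Z$, a $v$-cover $\widetilde U\to U$ that is simultaneously pro-(finite \'etale) and pro-Galois, has affinoid perfectoid total space, and is a tilde-limit of its finite levels, so that $v$-descent, finite \'etale descent, the continuity of Proposition~\ref{p:rigid-approxi-for-bOx}, and the perfectoid base case can all be played off against one another on common ground. Once that cover and its elementary properties are in place, the remainder is a diagram chase with filtered colimits and almost mathematics.
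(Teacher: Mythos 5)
Your proposal is correct and takes essentially the same route as the paper's own proof: localize to toric charts, pass to the associated pro-finite-\'etale affinoid perfectoid cover, compare the two sides via $v$-descent and finite \'etale Galois descent on $H^0$, and use the already-established perfectoid case of Proposition~\ref{p:rigid-approxi-for-bOx} (i.e.\ Lemma~\ref{l:part-2-for-fet-on-Y}) to identify the colimit of \'etale sections at finite level with the sections on the perfectoid cover, where \'etale and $v$-sections of $\O^+/\varpi$ (resp.\ $\bOx$) almost agree. The only caveat is that the toric tower is Galois over $K$ only if the relevant $p$-power roots of unity lie in $K$, which is repaired by a further finite \'etale localization and is an assumption the paper's proof leaves implicit as well.
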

\begin{Remark} We show a  more general statement in \cite[Proposition~2.14]{heuer-G-torsors-perfectoid-spaces}, which also says that in fact, we can write $=$ instead of $\aeq$. But \cref{l:bOx-et-vs-v-on-XxY} is much easier to see:
\end{Remark}
\begin{proof}
	The statement is local on $X$. We may therefore assume that $X$ is affinoid and that we can find an affinoid perfectoid pro-finite-\'etale Galois cover $\wt X=\varprojlim X_i\to X$ with group $G=\varprojlim G_i$. Let $U\in (X\times Y)_{\stdet}$ be standard-\'etale, in particular affinoid, and let $\wt U\to \wt X\times Y$ be the pullback. We can without loss of generality assume that $\wt U=\varprojlim U_i\to U$ is affinoid perfectoid and pro-finite-\'etale Galois with group $G$. Then 
	we have $(\O^+_{\et}/\varpi)(\wt U)\aeq (\O^+_{v}/\varpi)(\wt U)$ since $\wt U$ is perfectoid and $\O^+$ is almost acyclic on affinoid perfectoids for both the \'etale and the $v$-topology by \cite[Proposition~7.13]{perfectoid-spaces} and \cite[Proposition~8.8]{etale-cohomology-of-diamonds}, respectively. Consequently,
	\[(\O^+_{v}/\varpi)(U)=(\O^+_{v}/\varpi)(\wt U)^G\aeq  (\O^+_{\et}/\varpi)(\wt U)^G=\varinjlim_i( \O^+_{\et}/\varpi)(U_i)^{G_i}=\O^+_{\et}/\varpi(U),\]
	where the third step follows from  \cref{l:part-2-for-fet-on-Y} and \cref{l:dense-image-implies-isomorphism-on-O^+/varpi} upon \'etale  sheafification.
	
	The case of $\bOx$ is analogous once we know that
	\[\bOx_{\et}(\wt U)= \bOx_{v}(\wt U).\]
	To see this, let $\tau$ be either the \'etale or the v-topology, then we have the exponential sequence from \cite[Lemma~2.18]{heuer-v_lb_rigid}:
	\[0\to \O_\tau \xrightarrow{\exp} \varinjlim_{x\mapsto x^p }\O^\times_\tau \to \bOx_{\tau}\to 1.\]
	This remains short exact after evaluating at $\wt U$ since $\O$ is acyclic on affinoid perfectoids in both topologies. The desired statement now follows from the fact that $\O_\et(\wt U)=\O_v(\wt U)$ and $\O^\times_\et(\wt U)=\O^\times_v(\wt U)$ since $\wt U$ is perfectoid.
\end{proof}
	
It follows that in order to prove Claim~\ref{cl:approx-O^+/p-for-U-sheaf-version} for rigid $X$, it suffices to prove the statement for $\O^+_{\et}/\varpi$ replaced by $\O^+_{v}/\varpi$, and $\bOx_{\et}$ replaced by $\bOx_{v}$. But since $X$ is a qcqs smooth rigid space, there is a $v$-cover of $X$ by a qcqs perfectoid space $\wt X$ such that $\wt X\times_X\wt X$ is qcqs perfectoid. Therefore the result now follows from the statement for perfectoid $X$.

 This finishes the proof of Proposition~\ref{p:rigid-approxi-for-bOx}.
\end{proof}

Our main application of  Proposition~\ref{p:rigid-approxi-for-bOx} is that it implies the first part of Lemma~\ref{l:describing-master-diagram-1}.(B): 
\begin{Corollary}\label{c:Lemma-B}
	Let $\pi:X\to \Spa(K)$ be a qcqs smooth rigid space. Then the morphism of sheaves on $\Perf_{K,\et}$
	\[(R^n\pi_{\et\ast}\bOx)^\diamond\isomarrow R^n\pi^{\diamond}_{\et\ast}\bOx\]
	from \eqref{eq:base-change-from-rigid-to-diamond}
	is an isomorphism for all $n\geq 0$. Similarly for $\O^+/\varpi$ for any $0\neq \varpi\in K^+$.
\end{Corollary}
\begin{proof}
	Unravelling the definition, we see that both sides are the \'etale sheafifications of the presheaves on $\Perf_K$ defined as follows: The left hand side is
	\[ Y\mapsto \varinjlim_{Y\to Z}H^n_{\et}(X\times Z,\bOx)\]
	where $Y\to Z$ ranges through all morphisms to affinoid smooth rigid spaces  $Z$ over $K$, and where we use $Y_{\etqcqs}=\twolim Z_{\etqcqs}$ to see that it suffices to sheafify with respect to $Y$.
	
	The right hand side is 
	\[ Y\mapsto H^n_{\et}(X\times Y,\bOx).\]
	We would like to argue that theses two presheaves are isomorphic on affinoid perfectoid $Y$ by Proposition~\ref{p:rigid-approxi-for-bOx} in the case of $U=X\times Y$. To see that this applies, it remains to prove:
\begin{Proposition}\label{l:rigid-approx-of-aff-spaces}
	Let $(K,K^+)$ be a non-archimedean field over $\Z_p$ and let $Y\to Y_0$ be a morphism of affinoid adic spaces over $(K,K^+)$. Assume that $Y$ is uniform and that for any affinoid smooth morphism $Y_1\to Y_0$ of topologically finite type also $Y_1$ is uniform. For example, this is satisfied when $Y$ and $Y_0$ are sousperfectoid. Let $(Y\to Y_i)_{i\in I}$ be the cofiltered inverse system of all morphisms of adic spaces from $Y$ into affinoid adic spaces $Y_i$ that are smooth of topologically finite type over $Y_0$. Then
	\[Y\approx\varprojlim_{Y\to Y_i} Y_i.\]
	In particular, if $K$ is perfectoid, then for any abelian sheaf $F$ on $\mathrm{SmRig}_{K,\et}$ and any $n\geq 0$, we have
	\[ H^n_{\et}(Y,F^{\diamond})=\varinjlim_i H^n_{\et}(Y_i,F). \]
	All of this remains true if we instead take the $Y_i$ to be open subspaces of unit balls over $Y_0$.
\end{Proposition}

\begin{proof}
	That the assumptions are satisfied when $Y$ and $Y_0$ are sousperfectoid follows from \cite[Propositions 6.3.3 and 6.3.4]{ScholzeBerkeleyLectureNotes}.

	Let $0\neq \varpi\in K^+$ be a pseudo-uniformiser.  As the very first step, we consider a different inverse system that is not yet smooth:
	Write $Y=\Spa(S,S^+)$ and $Y_0=\Spa(S_0,S_0^+)$ and let $\mathcal J$ be the partially ordered set of finite subsets of $S^+$. For $J\in \mathcal J$, let $S_J$ be the image of 
	\[\phi_J:S_0\langle X_j|j\in J\rangle\to S,\quad X_j\mapsto j,\]
	and let $S^+_J$ be the integral closure of the image $S_{J,0}$ of $S_0^+\langle X_j|j\in J\rangle$ in $S_J$. 
	Then $S_J\subseteq S$, and the $Z_J:=\Spa(S_J,S^+_J)$ form a cofiltered inverse system of adic spaces of topologically finite type over $Y_0$ such that $\varinjlim_{J\in \mathcal J} S^+_J\to S^+$ is an isomorphism by construction: In fact, already $\varinjlim_{J\in \mathcal J} S_{J,0}\to S^+$ is an isomorphism. We thus have
	$ Y\approx\varprojlim_{J\in \mathcal J} Z_J$
	where we use \cite[Proposition~2.4.2]{ScholzeWeinstein} to see the required statement about the underlying topological spaces. Here we use that $S$ is uniform, so $S^+$ has the $\varpi$-adic topology.
	
	Passing to the inverse system $ I$ in the Lemma, we note that any morphism $Y\to Y_i$ to a smooth affinoid adic space over $Y_0$ factors through some $Z_J$. We thus get a well-defined map
	\begin{equation}\label{eq:approx-Y-by-various-rigid-systems}
		\varinjlim_{i\in  I} \O^+(Y_i)\to \varinjlim_{J\in\mathcal J} \O^+(Z_J)=\varinjlim_{J\in \mathcal J} S_{J,0}
	\end{equation}
	and it suffices to prove that this becomes an isomorphism after $\varpi$-adic completion.
	
	We first note that the map is surjective: this is because any $Z_J$ has by its definition via $\phi_J$ a closed immersion into a closed ball $Z_J\hookrightarrow\B^{J}$, and $\B^{J}$ is smooth and thus appears as one of the $Y_i$ on the left hand side. Thus $S_{J,0}$ is in the image.
	
	To see that it is in fact an isomorphism after $\varpi$-adic completion, let $Y\to Z=\Spa(R,R^+)$ be any morphism into a smooth affinoid over $Y_0$. Let $Z_0\subseteq Z$ be the closure of the image, i.e.\ the closed subspace cut out by the kernel $N\subseteq R$ of the corresponding map $R\to S$.
	\begin{claim}\label{cl:closed-is-proj-limit-of-open-nbhds}
		We have $Z_0\approx\varprojlim_{Z_0\subseteq U\subseteq Z}U$ where $U$ ranges through the rational open neighbourhoods of $Z_0$ in $Z$.
	\end{claim}
	\begin{proof}
		Clearly $\varinjlim \O(U)\to \O(Z_0)$ is even surjective, so it suffices to check the condition on topological spaces:
		Both sides are subspaces of $|Z|$, so the map is necessarily a homeomorphism onto its image. It is also surjective: Let $x\in |Z|\backslash |Z_0|$, then there is $f\in N$ such that $|f(x)|\neq 0$. Since $\varpi$ is topologically nilpotent and $Z$ is quasi-compact, it follows that there is $k$ such that $|f(x)|> |\varpi^k|$ on $Z$. Thus $|f|\leq |\varpi^k|$ defines a rational open neighbourhood of $Z_0$ that does not contain $x$.
	\end{proof}
	Lemma~\ref{l:dense-image-implies-isomorphism-on-O^+/varpi} now implies that $\varinjlim_{Z_0\subseteq U\subseteq Z} \O^+(U)\to \O^+(Z_0)$ becomes an isomorphism mod $\varpi^k$. It follows that both sides of \eqref{eq:approx-Y-by-various-rigid-systems} agree mod $\varpi^k$ with 
	\[ \varinjlim_{Y\to U\subseteq \B^n_{Y_0}}\O^+(U)/\varpi^k,\]
	where the index category consists of morphisms from $Y$ into rational open subspaces $U\subseteq \B^n_{Y_0}$ of rigid polydiscs. This proves the first part of Lemma~\ref{l:rigid-approx-of-aff-spaces}.
	
	The part about cohomology now follows from Lemma~\ref{l:cohom-of-limit-topos}. Alternatively, we could follow the argument in \cite[Proposition~14.9]{etale-cohomology-of-diamonds}, or in \cite[Lemma~3.16, Corollary~3.17]{Scholze_p-adicHodgeForRigid}.
\end{proof}
\cref{l:rigid-approx-of-aff-spaces} applied to $Y_0=\Spa(K)$ thus finishes the proof of \cref{c:Lemma-B}.
\end{proof}
A  similar but much easier argument as in \cref{c:Lemma-B} completes step (E) of \cref{l:describing-master-diagram-2}:
\begin{Corollary}\label{c:base-change-for-etale-sheaves}
	For any $n,N\in \N$, the map from \eqref{eq:base-change-from-rigid-to-diamond} for $\mathcal F=\Z/N\Z$ is an isomorphism
	\[ (R^n\pi_{\et\ast}\Z/N\Z)^\diamondsuit\isomarrow R^n\pi_{\et\ast}^{\diamondsuit}\Z/N\Z.\]
\end{Corollary}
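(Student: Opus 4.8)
The plan is to follow the template of the proof of Corollary~\ref{c:Lemma-B}, but to feed in the trivial fact that a constant sheaf commutes with cofiltered limits of spatial diamonds in place of the approximation Proposition~\ref{p:rigid-approxi-for-bOx}. Both $(R^n\pi_{\et\ast}\Z/N\Z)^\diamondsuit$ and $R^n\pi^\diamondsuit_{\et\ast}\Z/N\Z$ are abelian sheaves on $\Perf_{K,\et}$, so it suffices to check that the comparison map of \eqref{eq:base-change-from-rigid-to-diamond} is an isomorphism after restriction to, and sheafification on, affinoid perfectoid spaces $Y$. Exactly as in the proof of Corollary~\ref{c:Lemma-B}, the source is the \'etale sheafification of the presheaf $Y\mapsto \varinjlim_{Y\to Z}H^n_{\et}(X\times Z,\Z/N\Z)$, where $Y\to Z$ ranges over all morphisms to smooth affinoid rigid spaces over $K$, while the target is the sheafification of $Y\mapsto H^n_{\et}(X\times Y,\Z/N\Z)$. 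So I would reduce to comparing these two presheaves on affinoid perfectoid $Y$.

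Next I would invoke Lemma~\ref{l:rigid-approx-of-aff-spaces} with $Y_0=\Spa(K)$: it shows $Y\approx\varprojlim_{i\in I}Y_i$, where $(Y_i)_{i\in I}$ is precisely the cofiltered system of all morphisms from $Y$ to smooth affinoid rigid spaces over $K$ --- the same index system appearing in the source presheaf above. Hence the source presheaf is computed at $Y$ by $\varinjlim_{i\in I}H^n_{\et}(X\times Y_i,\Z/N\Z)$, and the task is reduced to the identity
\[ H^n_{\et}(X\times Y,\Z/N\Z)\;=\;\varinjlim_{i\in I}H^n_{\et}(X\times Y_i,\Z/N\Z). \]
Since $X$ and $Y_i$ are rigid, $X\times Y_i$ is a qcqs rigid space (proper over the affinoid $Y_i$), hence a spatial diamond, and the transition maps, induced by the affinoid morphisms $Y_j\to Y_i$, are spectral; as fibre products commute with cofiltered limits of diamonds and $Y^\diamondsuit=\varprojlim_i Y_i^\diamondsuit$ by Lemma~\ref{l:rigid-approx-of-aff-spaces} together with \cite[Proposition~2.4.5]{ScholzeWeinstein}, we obtain $X\times Y=\varprojlim_i(X\times Y_i)$ as diamonds.

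At this point the identity above is a formal consequence of the theory of cofiltered limits of coherent topoi applied to a constant sheaf: by \cite[Proposition~11.23]{etale-cohomology-of-diamonds} one has $(X\times Y)_{\etqcqs}=\twolim_i(X\times Y_i)_{\etqcqs}$, and the constant sheaf $\Z/N\Z$ on $X\times Y$ is tautologically the colimit of the pullbacks of the constant sheaves $\Z/N\Z$ on the $X\times Y_i$, so \cite[VI Th\'eor\`eme~8.7.3]{SGA4} --- equivalently Lemma~\ref{l:cohom-of-limit-topos}, whose hypothesis holds trivially for constant sheaves, or \cite[Proposition~14.9]{etale-cohomology-of-diamonds} --- yields the cohomology identity. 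I do not expect a genuine obstacle here: unlike in Corollary~\ref{c:Lemma-B}, no substantial part of Proposition~\ref{p:rigid-approxi-for-bOx} is required beyond the approximation statement of Lemma~\ref{l:rigid-approx-of-aff-spaces}, because constant coefficients automatically commute with the relevant colimits; the only point needing a little attention is verifying that each $X\times Y_i$ is a spatial diamond with spectral transition maps, which is where properness (hence quasi-compactness) of $X$ enters.
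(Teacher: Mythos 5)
Your argument is correct and is essentially the paper's proof: the paper likewise identifies both sides as sheafifications of the presheaves $Y\mapsto\varinjlim_{Y\to Z}H^n_{\et}(X\times Z,\Z/N)$ and $Y\mapsto H^n_{\et}(X\times Y,\Z/N)$ and then concludes that these agree by \cite[Proposition~14.9]{etale-cohomology-of-diamonds}. You merely spell out the intermediate steps (the tilde-limit from Lemma~\ref{l:rigid-approx-of-aff-spaces} and the limit-topos formalism) that the paper leaves implicit in the phrase ``arguing as in the last proof.''
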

\begin{proof}
	Arguing as in the last proof, we see that the first term is the sheafification of
	\[Y\mapsto\varinjlim_{Y\to Z} H^n_{\et}(X\times Z,\Z/N),\]
	whereas the second term is the sheafification of $Y\mapsto H^n_{\et}(X\times Y,\Z/N)$. The two presheaves agree by \cite[Proposition~14.9]{etale-cohomology-of-diamonds} which applies by \cref{l:rigid-approx-of-aff-spaces}.
\end{proof}

In summary, we have in this section completed those parts of  Lemma~\ref{l:describing-master-diagram-1}.(B) and Lemma~\ref{l:describing-master-diagram-2}.(E) that concern the comparison of the bottom two rows.

Before we continue, let us record a variant of \Cref{p:rigid-approxi-for-bOx} which is useful for applications and which follows immediately from the above technical results:
\begin{Corollary}\label{c:tilde-limit-b0x}
	Let $Y$ be a perfectoid space over $K$ and let $(Y_i)_{i\in I}$ be a cofiltered inverse system of qcqs adic spaces over $K$ such that $Y\sim\varprojlim Y_i$ and such that every object of $Y_{i,\et}$ is sheafy for all $i\in I$. Assume that there is $i\in I$ and a cover of $Y_i$ by affinoid open subspaces $U_i\subseteq  Y_i$ such that $U:=Y\times_{Y_i}U_i$ is affinoid perfectoid and $U_j:=Y_j\times_{Y_i}U_i$ is affinoid for any $j\geq i$ and such that $U\approx \varprojlim_{j\geq i} U_j$. Then for any $n\geq 0$, the natural map
	\[ \varinjlim H^n_{\et}(Y_i,F)\to H^n_{\et}(Y,F)\]
	is an isomorphism
	for $F=\bOx$ or $F=\O^+/\varpi$  for any $0\neq \varpi\in \m$.
\end{Corollary}
\begin{proof}
	By \Cref{l:part-2-for-fet-on-Y} and \Cref{l:dense-image-implies-isomorphism-on-O^+/varpi}, we have for any  $V\in U_{i,\stdet}$ that \[\varinjlim_{j\geq i} \O^+(Y_j\times_{Y_i}V)/\varpi= \O^+(Y\times_{Y_i}V)/\varpi,\]
	and similarly for $\bOx$. After sheafification on $Y_{\etqcqs}=\twolim_iY_{i,\etqcqs}$, this shows that the assumptions of \Cref{l:cohom-of-limit-topos} are satisfied. This implies the Corollary.
\end{proof}
\section{Cohomology of products of rigid with perfectoid spaces}
The main aim of this section is to complete the proofs of \cref{l:describing-master-diagram-1} and  \cref{l:describing-master-diagram-2}. We start with cohomological computations for the sheaf $\O$, for which we also prove the results described in the first part of \cref{s:diamantine-comparison-explained}, namely \cref{p:HT-ses-vs} and \cref{p:HTses-of-R^1pi-for-O}.

Throughout this section, $K$ is a non-archimedean field extension of $\Q_p$.
\subsection{Cohomology of $\O$}
We begin with a few general lemmas on the cohomology of ``mixed tensor products'' of rigid with perfectoid spaces, which are essentially coherent base-change results. For these we crucially use Kiehl's Theorem that $R\Gamma(X,\O)$ is perfect for any proper rigid space  $X$.
\begin{Lemma}\label{l:cohomology-of-O-for-mixed-spaces}
	Let $X$ be an affinoid rigid space over $K$ and let $Y$ be an affinoid perfectoid space. Then for $n> 0$, we have 
	$H^n_{\et}(X\times Y,\O)= 0$.
\end{Lemma}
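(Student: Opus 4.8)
The plan is to reduce the statement to the acyclicity of the structure sheaf on an affinoid sousperfectoid space, for which Tate acyclicity in the stably uniform setting is available, and then to pass from the analytic to the \'etale topology by the usual comparison. Write $X=\Spa(A,A^+)$ and $Y=\Spa(R,R^+)$, and first treat the case where $X$ is smooth. Then, as recalled in Remark~\ref{rm:XxY-rep}, the fibre product $X\times Y$ is represented by a \emph{sousperfectoid} affinoid adic space, namely $\Spa(B,B^+)$ with $B=A\hotimes_K R$. Since sousperfectoid Tate rings are stably uniform, the analytic structure sheaf is acyclic on such a space, so $H^n_{\an}(X\times Y,\O)=0$ for $n>0$ by the stably uniform version of Tate's theorem \cite{KedlayaLiu-rel-p-p-adic-Hodge-I}. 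To pass to the \'etale site, I would use that a basis of $(X\times Y)_{\et}$ is given by compositions of rational localisations and finite \'etale maps (cf.\ \cite[Lemmas~11.31 and 15.6]{etale-cohomology-of-diamonds}) and that a finite \'etale cover of an affinoid sousperfectoid space is again affinoid sousperfectoid; feeding this into the \v{C}ech-to-derived-functor spectral sequence and invoking faithfully flat descent of modules along finite \'etale covers then reduces the vanishing of $H^n_{\et}(X\times Y,\O)$ to the analytic statement just proved (equivalently, one may cite the comparison $H^n_{\et}(W,\O)=H^n_{\an}(W,\O)$ for affinoid sousperfectoid $W$, see \cite{ScholzeBerkeleyLectureNotes,KedlayaLiu-rel-p-p-adic-Hodge-I}).

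For general affinoid rigid $X$ I would reduce to the smooth case by choosing a closed immersion $X\hookrightarrow \B^d$ into a polydisc. Base-changing along $Y$ gives a closed immersion $i\colon X\times Y\hookrightarrow \B^d\times Y$ of adic spaces, where $\B^d\times Y$ is affinoid sousperfectoid. As $i$ is a closed immersion we have $Ri_{\ast}=i_{\ast}$ on \'etale sheaves, hence $H^n_{\et}(X\times Y,\O)=H^n_{\et}(\B^d\times Y,i_{\ast}\O_{X\times Y})$, and $i_{\ast}\O_{X\times Y}$ is the finite $\O$-module attached to the quotient ring $B=A\hotimes_K R$ of $\O(\B^d\times Y)=R\langle T_1,\dots,T_d\rangle$. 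The argument from the smooth case then applies once one knows that finite (indeed pseudocoherent) $\O$-modules on an affinoid sousperfectoid space are acyclic in positive degrees for both topologies, which is again part of the stably uniform acyclicity formalism of \cite{KedlayaLiu-rel-p-p-adic-Hodge-I}.

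The hard part is precisely this last point: the ring $R\langle T_1,\dots,T_d\rangle$ is not Noetherian, so acyclicity of $\O$ does not propagate formally to all finitely generated modules. One deals with this either by appealing to pseudocoherence (the ideal cutting out $X$ in $\B^d$ is finitely generated, so $i_{\ast}\O_{X\times Y}$ is pseudocoherent) or, when $X$ is a local complete intersection in $\B^d$, by resolving $i_{\ast}\O_{X\times Y}$ by a finite Koszul complex of free $\O_{\B^d\times Y}$-modules and reducing to the case $F=\O$ already settled. Since in all later applications (in particular in the proof of Proposition~\ref{p:HTses-of-R^1pi-for-O}) the space $X$ is smooth, one may if desired bypass this subtlety entirely and only record the smooth case.
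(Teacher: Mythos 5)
Your argument is correct and is essentially the paper's proof unpacked: the paper disposes of the lemma in one line by citing Kedlaya--Liu's acyclicity theorem for the structure sheaf together with the observation that rational localisations and finite \'etale maps form a basis of $(X\times Y)_{\et}$, and this is precisely the sousperfectoid/stably-uniform Tate acyclicity plus \'etale descent along finite \'etale covers that you spell out. Your concern about non-smooth $X$ is legitimate --- the paper's one-line citation is taken to cover that case wholesale, whereas your pseudocoherent/Koszul workaround is genuinely more delicate (the relevant acyclicity for pseudocoherent modules over these non-Noetherian rings lives in Kedlaya--Liu's later work, not in the foundational volume you cite) --- but, as you observe, the lemma is only ever invoked for $X$ a smooth affinoid (\'etale over a torus), so restricting to the smooth case loses nothing.
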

\begin{proof}
	This is true in much greater generality by an application of \cite[Theorem~8.2.22(c)]{KedlayaLiu-rel-p-p-adic-Hodge-I}. This applies here because \'etale maps that factor into rational embeddings and finite \'etale maps form a basis for $(X\times Y)_{\et}$ by Proposition~\cite[Proposition~11.31]{etale-cohomology-of-diamonds}.
\end{proof}

\begin{Proposition}\label{p:affinoid-perfectoid-primitive-comparison}
	Let $X$ be a smooth proper rigid space over $K$.
	\begin{enumerate}
		\item Let $Y$ be any smooth affinoid rigid space. Then
		\[ H^n_{\et}(X\times Y,\O)=H^n_{\et}(X,\O)\otimes_{K}\O(Y).\]
		\item Let $Y$ be an affinoid perfectoid space over $K$. Then there are natural isomorphisms: 
		\begin{alignat*}{3}
	&(i)&H^n_{\et}(X\times Y,\O)&\,=\,&&H^n_{\et}(X,\O)\otimes_K\O(Y),\\
	&(ii)&\quad H^n_v(X\times Y,\O)&\,=\,&& H^n_{v}(X,\O)\otimes_{K}\O(Y),\\
	&(iii)&\quad H^n_v(X\times Y,\O^+/p^k)&\,\aeq\,&& H^n_{v}(X,\O^+/p^k)\otimes_{K^+}\O^+/p^k(Y).
	\end{alignat*}

	In particular, $R^n\pi^\diamond_{\et\ast}\O=H^n_{\et}(X,\O)\otimes_K \O$ and $R^n\pi^\diamond_{v\ast}\O=H^n_{v}(X,\O)\otimes_{K} \O$.
	\item Let $Y$ be an affinoid perfectoid space over $K$ and assume that $K$ is algebraically closed. Then the following natural map is an almost isomorphism:
	\[H^n_{\et}(X,\Z_p)\otimes_{\Z_p} \O^+(Y)\to H^n_v(X\times Y,\O^+).\]
	\end{enumerate}
\end{Proposition}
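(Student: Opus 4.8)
The plan is to build everything on the fact that $R\Gamma_{\et}(X,\O)$ is a perfect complex of $K$-vector spaces: by Tate acyclicity $\O$ has no higher \'etale cohomology on affinoid rigid spaces, so $R\Gamma_{\et}(X,\O)$ is computed by the \cH\ complex of any finite affinoid cover of $X$, and this complex is perfect by Kiehl's finiteness theorem for proper rigid spaces; the same applies to $R\Gamma_{\et}(X,\Omega^j_X)$ for every $j$. Since $K$ is a field, ``perfect'' means quasi-isomorphic to a bounded complex $P^\bullet$ of finite-dimensional $K$-vector spaces. The reason this matters is that the naive K\"unneth identities below hold a priori only with a \emph{completed} tensor product $-\hotimes_K\O(Y)$, but once the cohomology of $X$ is represented by such a $P^\bullet$ the tensor products become ordinary and underived, because $\hotimes_K=\otimes_K$ on finite-dimensional $K$-vector spaces and $-\otimes_K\O(Y)$ is exact. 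The analogous remark over $\Z/p^m$, where finiteness of \'etale cohomology makes $R\Gamma_{\et}(X,\Z/p^m)$ perfect, is what will make part~3 run.

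For parts~1 and 2(i) I would fix a finite affinoid cover $\{U_i\}$ of $X$ and write $U_{\underline i}$ for the finite intersections. If $Y$ is affinoid rigid, each $U_{\underline i}\times Y$ is affinoid rigid; if $Y$ is affinoid perfectoid, each $U_{\underline i}\times Y$ is an affinoid sousperfectoid adic space with $\O(U_{\underline i}\times Y)=\O(U_{\underline i})\hotimes_K\O(Y)$ (cf.\ \cref{rm:XxY-rep}). In both cases $\O$ has vanishing higher \'etale cohomology on these affinoids --- by Tate acyclicity in the rigid case, by \cref{l:cohomology-of-O-for-mixed-spaces} in the perfectoid case --- so $R\Gamma_{\et}(X\times Y,\O)$ is computed by the \cH\ complex of $\{U_i\times Y\}$, which term by term is $\check C^\bullet(\{U_i\},\O)\hotimes_K\O(Y)$. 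Choosing a quasi-isomorphism $P^\bullet\to\check C^\bullet(\{U_i\},\O)\simeq R\Gamma_{\et}(X,\O)$ with $P^\bullet$ a bounded complex of finite free $K$-modules, the mapping cone is a bounded exact complex of $K$-Banach spaces, hence strictly exact by the open mapping theorem, hence stays exact after $-\hotimes_K\O(Y)$. Therefore $R\Gamma_{\et}(X\times Y,\O)\simeq P^\bullet\otimes_K\O(Y)$, and taking cohomology, $-\otimes_K\O(Y)$ being exact, yields $H^n_{\et}(X\times Y,\O)=H^n_{\et}(X,\O)\otimes_K\O(Y)$, which is parts~1 and 2(i).

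For part~2(ii) the assertion is local on $X$, so I would choose $\{U_i\}$ so that each $U_i$ carries an affinoid perfectoid pro-finite-\'etale Galois $v$-cover $\wt U_i\to U_i$ with group $G_i$. Then $\wt U_i\times Y$ and its fibre powers over $U_i\times Y$ are affinoid perfectoid (products of affinoid perfectoids over the perfectoid field $K$), on which $H^{>0}_v(-,\O)=0$; hence the \cH-to-derived-functor spectral sequence gives $R\Gamma_v(U_i\times Y,\O)=R\Gamma_{\cts}(G_i,\O(\wt U_i\times Y))$ with $\O(\wt U_i\times Y)=\O(\wt U_i)\hotimes_K\O(Y)$ by (the proof of) \cref{l:part-2-for-XxY}. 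Combining Scholze's local computation of the $v$-cohomology of $\O$ on smooth rigid spaces with the compatibility of continuous $G_i$-cohomology with $-\hotimes_K\O(Y)$ (as $G_i$ acts trivially on $\O(Y)$) gives $R^j\nu_{\ast}\O=\Omega^j_X(-j)\otimes_{\O_X}\O_{X\times Y}$ on $X\times Y$, where $\nu\colon(X\times Y)_v\to(X\times Y)_{\et}$. Feeding this into the spectral sequence $E_2^{ij}=H^i_{\et}(X\times Y,\Omega^j_X(-j)\otimes_{\O_X}\O_{X\times Y})\Rightarrow H^{i+j}_v(X\times Y,\O)$ and rewriting $E_2^{ij}=H^i_{\et}(X,\Omega^j_X(-j))\otimes_K\O(Y)$ by part~1 applied to the coherent sheaves $\Omega^j_X(-j)$, one sees that this spectral sequence is the one for $X$ itself base-changed along the exact functor $-\otimes_K\O(Y)$; hence $H^n_v(X\times Y,\O)=H^n_v(X,\O)\otimes_K\O(Y)$. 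The stated consequences for $R^n\pi^\diamond_{\et\ast}\O$ and $R^n\pi^\diamond_{v\ast}$ then follow by sheafifying in $Y$, the relevant presheaves already being sheaves by parts~1 and 2.

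Part~3 is the main obstacle: unlike the earlier parts it needs genuine input, namely the almost purity and finiteness underlying Scholze's primitive comparison theorem \cite{Scholze_p-adicHodgeForRigid}. I would first prove the mod-$p$ statement $R\Gamma_v(X\times Y,\O^+/p)\aeq R\Gamma_{\et}(X,\F_p)\otimes_{\F_p}\O^+(Y)/p$; the right side is already underived since $R\Gamma_{\et}(X,\F_p)$ is perfect over $\F_p$, so the content is the almost identification with the left side. There seem to be two routes: relativise Scholze's argument over $Y$ (the toric-tower computation together with the almost vanishing of $H^{>0}_v(-,\O^+/p)$ on affinoid perfectoids), or combine the absolute primitive comparison over $K$ with the rigid approximation of \cref{c:Lemma-B} and the comparison $\O^+_{\et}/p\aeq\nu_{\ast}(\O^+_v/p)$ of \cref{l:bOx-et-vs-v-on-XxY} to transport it from $X$ to $X\times Y$; I expect the latter to be the cleaner write-up. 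D\'evissage along $0\to\O^+/p\to\O^+/p^m\to\O^+/p^{m-1}\to 0$ followed by $\Rlim_m$ --- legitimate because $X\times Y$ is qcqs, so $R\Gamma_v$ has bounded cohomological dimension and the towers are Mittag--Leffler --- upgrades this to $R\Gamma_v(X\times Y,\O^+)\aeq R\Gamma_{\et}(X,\Z_p)\otimes_{\Z_p}\O^+(Y)$, which is part~3 on cohomology. As a consistency check, inverting $p$ should recover part~2(ii) when $K$ is algebraically closed: then $R\Gamma_v(X\times Y,\O)=R\Gamma_v(X\times Y,\O^+)\tf=R\Gamma_{\et}(X,\Q_p)\otimes_{\Q_p}\O(Y)$, which the Hodge--Tate decomposition identifies degreewise with $H^n_v(X,\O)\otimes_K\O(Y)$.
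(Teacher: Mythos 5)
Your parts 1 and 2(i) follow the same path as the paper: compute $H^n_{\et}(X\times Y,\O)$ by the \cH\ complex of a finite affinoid cover (acyclicity of $\O$ on the pieces from Tate acyclicity, resp.\ Lemma~\ref{l:cohomology-of-O-for-mixed-spaces}) and commute $-\hotimes_K\O(Y)$ past its cohomology; where the paper does the commuting integrally (the \cH\ complex of $\O^+$ is $p$-torsionfree with bounded-torsion cohomology by Kiehl finiteness, then \cite[Lemma A.3.6]{heuer-thesis}), you invoke perfectness plus the open mapping theorem -- note that the fact that a strictly exact complex of $K$-Banach spaces stays exact after $\hotimes_K\O(Y)$ is itself proved by passing to unit balls and reducing mod $\varpi$, so the two justifications are really the same argument in different clothing. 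For 2(ii), however, you take a genuinely different route: the relative computation $R^j\nu_\ast\O=\wt\Omega^j_X\otimes_{\O_X}\O_{X\times Y}$ on $X\times Y$ plus a comparison of Leray spectral sequences. This can be made to work, but its key step is exactly where the content sits: to get $R\Gamma_v(U_i\times Y,\O)\cong R\Gamma_v(U_i,\O)\hotimes_K\O(Y)$ from the Cartan--Leray description you must commute $\hotimes_K\O(Y)$ with continuous $G_i$-cohomology, and ``$G_i$ acts trivially on $\O(Y)$'' is not a justification -- one needs a bounded-torsion/orthonormalizability argument in the style of \cite[Lemma~5.5]{Scholze_p-adicHodgeForRigid} applied to $\O^+(\wt U_i)$ (and some care over general perfectoid, not algebraically closed, $K$, for which the local $R\nu_\ast$-computation must be taken with the appropriate twist). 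The paper sidesteps all of this by running a single integral \cH\ argument over the perfectoid cover $\wt{\mathcal U}\times Y$, using bounded $p$-torsion of $H^n_v(X,\O^+)$ (from the Primitive Comparison Theorem, or from coherence of $R\nu_\ast\O$ for general perfectoid $K$); that one computation yields $H^n_v(X\times Y,\O^+)\aeq H^n_v(X,\O^+)\hotimes\O^+(Y)$, hence 2(ii) upon inverting $p$ and part 3 immediately by combining with the absolute Primitive Comparison Theorem.

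Your preferred route to the mod-$p$ statement in part 3 has a genuine gap. Rigid approximation (Proposition~\ref{p:rigid-approxi-for-bOx}, Corollary~\ref{c:Lemma-B}) together with Lemma~\ref{l:bOx-et-vs-v-on-XxY} only reduces $H^n(X\times Y,\O^+/p)$ to a colimit of $H^n_{\et}(X\times Y_i,\O^+/p)$ over rigid affinoid $Y_i$; but $X\times Y_i$ is not proper, and none of the cited tools identifies this group with $H^n_{\et}(X,\F_p)\otimes\O^+(Y_i)/p$ -- that identification over a rigid base is itself a relative primitive comparison, which is precisely what is to be proved, so nothing has been ``transported''. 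The other route you mention (relativising Scholze's toric-tower argument over the perfectoid $Y$) is the one that works, and carried out integrally at the level of the \cH\ complex it is exactly the paper's proof; if you go that way you obtain 2(ii) and 3 simultaneously, making the spectral-sequence detour and the separate d\'evissage-plus-$\Rlim$ step unnecessary (that step is not wrong, just superfluous).
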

We remark that these statements are all easier special cases of a much more general adic version of Grothendieck's ``cohomology and base-change'' which will be proved in the sequel \cite[Theorem 3.18]{heuer-relative-HT}. For example, smoothness is not necessary for \Cref{p:affinoid-perfectoid-primitive-comparison}.1.
\begin{proof}
	We start with  part 2.(ii): 	Since $X$ is quasi-compact separated, we can choose a finite cover $\mathcal U$ of $X$ by affinoids $U_i$ with affinoid intersections that are \'etale over a torus, and thus admit toric pro-finite-\'etale covers $\wt U_i\to U_i$. Then any fibre product of the $\wt U_i$ over $X$ is affinoid perfectoid. Consequently, the cohomology $H^n_v(X,\O^+)$ is almost computed by the \cH\ complex $\check{C}^\bullet(\wt {\mathcal U},\O^+)$ where $\wt{\mathcal U}$ is the pro-\'etale cover of $X$ by the $\wt U_i$.
	
	Each $H^n_v(X,\O^+)$ has bounded $p$-torsion: This follows from the fact that the sheaves in \cref{p:Scholze-v-cohom-O} are coherent, so $H^n_v(X,\O)$ is finite dimensional. By an application of \Cref{l:completion-of-Banach-cpx}.1 in the appendix, this implies that $\check{C}^\bullet_v(\wt {\mathcal U},\O^+)$ is a complex of $p$-torsionfree $p$-complete $K^+$-modules whose cohomology has bounded $p$-torsion.
	
	\medskip
	
	We now add the factor  $Y$: Clearly the $\wt U_i\times Y$ form a cover $\wt{\mathcal U}\times Y$ of $X\times Y$ such that all spaces appearing in the \cH\ nerve are still affinoid perfectoid. Let $\O^+(Y)=S^+$, then the fact that $\check{C}^\bullet_v(\wt {\mathcal U},\O^+)$ has cohomology of bounded torsion implies by \Cref{l:completion-of-Banach-cpx}.2 that
	\begin{alignat*}{2}
	H^n_v(X\times Y,\O^+)&\aeq&& H^n(\check{C}^\bullet(\wt{\mathcal U}\times Y,\O^+))\aeq H^n(\check{C}^\bullet(\wt {\mathcal U},\O^+)\hotimes_{K^+} S^+)\\
	 &\aeq&&  H^n(\check{C}^\bullet(\wt {\mathcal U},\O^+))\hotimes_{K^+} S^+\\
	&\aeq&& H^n_v(X,\O^+)\otimes_{K^+} S^+.
\end{alignat*}
	After inverting $p$, this gives the desired equality for 2.(ii). 
	
	Part 2.(iii) also follows from the displayed equation by comparing the long exact sequences of $0\to \O^+\to \O^+\to \O^+/p^k\to 0$ for $X\times Y$ and $X$, using the 5-Lemma.
	
	If $K$ is algebraically closed, we also deduce part 3 using the Primitive Comparison Theorem, \cite[Theorem~5.1]{Scholze_p-adicHodgeForRigid}.
	
	Part 2.(i) follows by a similar argument using instead the cover $\mathcal U$: By Lemma~\ref{l:cohomology-of-O-for-mixed-spaces}, the group $\cH^n(\mathcal U,\O\hotimes S)$ computes $H^n_{\et}(X\times Y,\O)$. Since each $\check{H}^n( {\mathcal U},\O)$ is finite, we can now again apply \Cref{l:completion-of-Banach-cpx}.2 to the complex of $K^+$-modules $\check{C}^\bullet( {\mathcal U},\O^+)$ to see that:
	\[ H^n_{\et}(X\times Y,\O)= H^n(\check{C}^\bullet(\mathcal U,\O^+)\hotimes_{K^+} S^+)\tf= \check{H}^n( {\mathcal U},\O^+)\hotimes_{K^+} S^+\tf= H^n_{\et}(X,\O)\otimes S.\]
	
	Part 1 can be seen similarly:
By Tate acyclicity, $\cH^n(\mathcal U\times Y,\O)$ computes $H^n(X\times Y,\O)$. For any affinoid $U\subseteq X$, the map $\O^+(U)\hotimes_{K^+}\O^+(Y)\to\O^+(U\times Y)$ has bounded $p$-torsion cokernel since $U\times Y$ is uniform. Hence in the composition
	\[ H^n(\check{C}^\bullet(\mathcal U,\O^+))\hotimes_{K^+}\O^+(Y)\to H^n(\check{C}^\bullet(\mathcal U,\O^+)\hotimes_{K^+}\O^+(Y))\to H^n(\check{C}^\bullet(\mathcal U\times Y,\O^+)),\]
	the second map becomes an isomorphism after inverting $p$, while the first map is an isomorphism by \Cref{l:completion-of-Banach-cpx}. After inverting $p$, this gives the desired statement.
\end{proof}

Second, we need the following result of Scholze:
\begin{Proposition}[{\cite[Proposition~3.23]{Scholze2012Survey},
		\cite[2.24--2.25]{heuer-v_lb_rigid}}]\label{p:Scholze-v-cohom-O}
	Let $X$ be any smooth rigid space and let $\nu:X_{v}\to X_{\et}$ be the natural morphism of sites. Then $R^n\nu_{\ast}\O=\wedge^n\wtOm^1_X$.
\end{Proposition}

\begin{proof}[Proof of \cref{p:HT-ses-vs}]
	Using \cref{p:Scholze-v-cohom-O}, we see that the 5-term exact sequence of the Leray sequence for the morphism $\nu$ is of the form
	\begin{equation}\label{eq:partial_X} 
		0\to  H^1_\et(X,\O)\to  H^1_v(X,\O)\to H^0(X,\wtOm^1_X)\xrightarrow{\partial_X} H^2_{\et}(X,\O)\xrightarrow{j_X} H^2_{v}(X,\O).
	\end{equation}
	 If $K$ is algebraically closed, it follows from the degeneration of the Hodge--Tate spectral sequence  \cite[Theorem 1.7.(ii)]{BMS} that $\partial_X=0$. This implies that the fourth map $j_X$ is injective.
	
	The general case follows from this: It suffices to prove that $\partial_X=0$, or equivalently that $j_X$ is injective. Let $C$ be the completion of an algebraic closure of $K$.
	 By  \cref{p:affinoid-perfectoid-primitive-comparison} for $Y=\Spa(C)$, the base-change of $j_X$ along $K\to C$ admits an identification $j_X\otimes_K C=j_{X_C}$. This is injective by the algebraically closed case, hence $j_X$ is injective.
\end{proof}
We now move on to the relative Hodge--Tate sequence:
\begin{proof}[Proof of \cref{p:HTses-of-R^1pi-for-O}]
		The first part follows from comparing Proposition~\ref{p:affinoid-perfectoid-primitive-comparison}.1 and 2.
		
		To see the second part, we tensor the Hodge--Tate sequence for $X$ from \cref{p:HT-ses-vs} with $\O$ and see from Proposition~\ref{p:affinoid-perfectoid-primitive-comparison}.2.(i) and (ii) for $i=1$ that we obtain identifications
		\[
		\begin{tikzcd}
			0 \arrow[r] & R^1\pi^\diamond_{\et\ast}\O \arrow[r]                    & R^1\pi^\diamond_{v\ast}\O                 &   \\
			0 \arrow[r] & {H^1_{\et}(X,\O)\otimes \O} \arrow[u,"\sim"labelrotate] \arrow[r] & {H^1_v(X,\O)\otimes \O} \arrow[u,"\sim"labelrotate] \arrow[r] & {H^0(X,\wtOm^1(X))\otimes \O}  \arrow[r] & 0.\qedhere
		\end{tikzcd}\]
	\end{proof}

We can now also address Lemma~\ref{l:describing-master-diagram-2}.(F):

\begin{Proposition}\label{p:HTses-of-R^1pi-for-O-s4}
	The map $(R^2\pi_{\et\ast}\O)^\diamondsuit\to R^2\pi^\diamondsuit_{v\ast}\O$ is injective.
\end{Proposition}
\begin{proof}
	In the notation of the sequence \eqref{eq:partial_X} above,
	 Proposition~\ref{p:affinoid-perfectoid-primitive-comparison}.2 for $i=2$ identifies this map with
	$j_X\otimes \O:H^2_{\et}(X,\O)\otimes \O\hookrightarrow H^2_{v}(X,\O)\otimes \O$.
	This is injective by \Cref{p:HTses-of-R^1pi-for-O}.
\end{proof}

From the case of $i=0$ of Proposition~\ref{p:affinoid-perfectoid-primitive-comparison}, we will moreover deduce part (D) of Lemma~\ref{l:describing-master-diagram-2} (see the end of this subsection). For this we will use the following consequence:
\begin{Corollary}\label{c:pi_astO^+}
	Suppose that $X$ is geometrically connected. Then \[(\pi_{\et\ast}\O)^\diamondsuit=\O=\pi^{\diamondsuit}_{\et \ast}\O=\pi^{\diamondsuit}_{v \ast}\O.\] The analogous statements hold for $\Oone$, $\O^\times$ and $\O^{+a}/p^k$.
\end{Corollary}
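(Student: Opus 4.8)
The plan is to deduce everything from the degree-zero case of Proposition~\ref{p:affinoid-perfectoid-primitive-comparison}. First I would reduce all the asserted identities to the single statement that pullback along $X\times Y\to Y$ induces an isomorphism $\O(Y)\isomarrow\O(X\times Y)$ in each of the three relevant situations: the \'etale topology for $Y$ a smooth affinoid rigid space, the \'etale topology for $Y$ affinoid perfectoid, and the $v$-topology for $Y$ affinoid perfectoid. Indeed, affinoids (resp.\ affinoid perfectoids) form a basis of the respective sites, so such an isomorphism on the basis shows $\pi_{\et\ast}\O=\O$ on $\mathrm{SmRig}_{K,\et}$ and $\pi^\diamond_{\et\ast}\O=\pi^\diamond_{v\ast}\O=\O$ on $\Perf_K$; since $(-)^\diamondsuit$ is exact and carries the sheaf $\O=\G_a$ to the structure sheaf of $\Perf_K$, this also yields $(\pi_{\et\ast}\O)^\diamondsuit=\O$. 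Now by Proposition~\ref{p:affinoid-perfectoid-primitive-comparison}.1 and 2.(i)--(ii) in degree $n=0$ one has $H^0(X\times Y,\O)=H^0(X,\O)\otimes_K\O(Y)$ in all three cases; since $X$ is connected and proper over the field $K$ we have $H^0(X,\O_X)=K$, and $H^0_v(X,\O)=H^0_{\et}(X,\O)=K$ because $\O$ satisfies $v$-descent on $X$, so the right-hand side is just $\O(Y)$, the comparison map being precisely pullback. This settles the statement for $\O$.

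For $\O^\times$ and $U:=1+\m\O^+$ I would observe that these are carved out of $\O$ by conditions on the values of a section, and hence can be tested on $Y$ once we know $\O(X\times Y)=\O(Y)$. Concretely, that identification is a ring isomorphism, so invertible elements match up and $\O^\times(X\times Y)=\O^\times(Y)$; and since $X$ is non-empty the map of topological spaces $|X\times Y|\to|Y|$ is surjective, so $\sup_{X\times Y}|f-1|=\sup_Y|f-1|$ for $f$ pulled back from $Y$, which means the ``open disc of radius $1$ around $1$'' condition is detected on $Y$ and $U(X\times Y)=U(Y)$. The three conclusions (for $(\pi_{\et\ast}-)^\diamondsuit$, $\pi^\diamond_{\et\ast}$ and $\pi^\diamond_{v\ast}$), as well as $\pi_{\et\ast}(1+\m)=1+\m$ on $\mathrm{SmRig}_{K,\et}$, then follow exactly as in the previous paragraph.

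Finally, for $\O^+/p^k$ I would re-run the same reduction with integral coefficients: from the proof of Proposition~\ref{p:affinoid-perfectoid-primitive-comparison}.2 one extracts almost-isomorphisms $H^n(X\times Y,\O^+)\aeq H^n(X,\O^+)\hotimes_{\O_K}\O^+(Y)$ (in either topology), together with $H^0(X,\O^+)\aeq\O_K$ by connectedness and properness; feeding these into the short exact sequence $0\to\O^+\xrightarrow{p^k}\O^+\to\O^+/p^k\to 0$ and sheafifying in $Y$ then identifies $\pi_\ast(\O^+/p^k)$ with $\O^+/p^k$ in the almost category, which is what is needed downstream (alternatively one reduces $\varpi$-coefficients to this case using Corollary~\ref{c:Lemma-B} and Lemma~\ref{l:bOx-et-vs-v-on-XxY}). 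The one place where care is required --- and the step I expect to be the main, if minor, obstacle --- is controlling the $p^k$-torsion of $H^1(X\times Y,\O^+)$ occurring in that short exact sequence; this is handled by the bounded-$p$-torsion bookkeeping already present in the proof of Proposition~\ref{p:affinoid-perfectoid-primitive-comparison} (and, when $K$ is algebraically closed, by the integral primitive comparison isomorphism of Proposition~\ref{p:affinoid-perfectoid-primitive-comparison}.3). Everything else is formal.
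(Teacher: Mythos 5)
Your proposal is correct and follows essentially the same route as the paper: the $\O$ case is exactly the degree-zero instance of Proposition~\ref{p:affinoid-perfectoid-primitive-comparison}.1 and 2, the cases of $\O^\times$ and $1+\m$ are the same subsheaf argument the paper intends, and your treatment of $\O^+/p^k$ simply inlines the $n=0$ case of Corollary~\ref{c:PCT-for-XxY} (long exact sequence for multiplication by $p^k$, the $\O^+$-comparison from the proof of Proposition~\ref{p:affinoid-perfectoid-primitive-comparison}.2, and the Primitive Comparison Theorem), which is precisely what the paper cites. Two small caveats: the integral almost-isomorphism $H^n(X\times Y,\O^+)\aeq H^n(X,\O^+)\hotimes\O^+(Y)$ is only extracted from that proof in the $v$-topology (\'etale-ly the paper only controls $\O$ after inverting $p$), so the \'etale and rigid statements for $\O^+/p^k$ should be obtained via your alternative reduction through Corollary~\ref{c:Lemma-B} and Lemma~\ref{l:bOx-et-vs-v-on-XxY}; and bounded $p$-torsion of $H^1(X\times Y,\O^+)$ by itself does not make $H^1(X\times Y,\O^+)[p^k]$ almost zero --- it is really Proposition~\ref{p:affinoid-perfectoid-primitive-comparison}.3 that does, so, as in the paper, this part is established for $K$ algebraically closed.
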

\begin{proof}
	The first part follows from \cref{p:affinoid-perfectoid-primitive-comparison}.1-2.(i): Here we use that $H^0(X,\O)=K$ since $X$ is geometrically connected. The cases of $\O^\times$ and $\Oone$ follow as these are subsheaves of $\O$.  
	
	For $\O^{+a}/p^k$, we first recall that we have $(\pi_{\et\ast}(\O^+/p^k))^\diamondsuit=\pi^{\diamondsuit}_{\et \ast}(\O^+/p^k)$ by \Cref{c:Lemma-B}. Second, we have $\pi^{\diamondsuit}_{\et \ast}(\O^+/p^k)=\pi^{\diamondsuit}_{v \ast}(\O^+/p^k)$  by \cref{l:bOx-et-vs-v-on-XxY}. Finally, it follows from \cref{p:affinoid-perfectoid-primitive-comparison}.2.(iii) that $\pi^{\diamondsuit}_{v \ast}(\O^+/p^k)=H^0_v(X,\O^+/p^k)\otimes \O^+/p^k$. It thus remains to see that $H^0_v(X,\O^+/p^k)\aeq K^+/p^k$. For this we can use \Cref{p:rigid-approxi-for-bOx} to reduce to the case that $K$ is algebraically closed, where the statement follows from \cref{p:affinoid-perfectoid-primitive-comparison}.3.
\end{proof}

We can also deduce a  version of the Primitive Comparison Theorem relatively over $Y$:
\begin{Corollary}\label{c:PCT-for-XxY}
	Assume that $K$ is algebraically closed.
	Let $X$ be a smooth proper rigid space over $K$ and let $Y$ be affinoid perfectoid over $K$. Then the natural map
	\[H^n_{v}(X,\Z/p^k)\otimes \O^+(Y)/p^k\to H^n_v(X\times Y,\O^+/p^k)\]
	is an almost isomorphism for all $n\geq 0$. In particular, the natural map
	\[H^n_{v}(X,\F_p)\otimes \O^{\flat+}(Y)\to H^n_v(X\times Y,\O^{\flat+})\]
	is an almost isomorphism for all $n\geq 0$, compatible with Frobenius actions on both sides.
\end{Corollary}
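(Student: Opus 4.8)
The plan is to deduce both parts from the relative primitive comparison for $\O^+$ established in Proposition~\ref{p:affinoid-perfectoid-primitive-comparison}.3, first by reducing modulo $p^k$, and then — for the tilted statement in part~2 — by passing to the limit along Frobenius. For part~1, I would first observe that the proof of Proposition~\ref{p:affinoid-perfectoid-primitive-comparison} works at the level of complexes: the \v{C}ech computation together with the bounded‑torsion input (via \cite[Lemma~A.3.6]{heuer-thesis}) and the integral Primitive Comparison Theorem for $X$ gives an almost isomorphism
\[ R\Gamma_v(X\times Y,\O^+)\aeq R\Gamma_{\et}(X,\Z_p)\otimes^{L}_{\Z_p}\O^+(Y),\]
the tensor product being underived since $\O^+(Y)$ is $p$‑torsionfree, hence flat over $\Z_p$. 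As $\O^+$ is $p$‑torsionfree on a basis of $(X\times Y)_v$, the sheaf $\O^+/p^k$ represents $\O^+\otimes^{L}_{\Z_p}\Z/p^k$, so applying $-\otimes^{L}_{\Z_p}\Z/p^k$ and using $R\Gamma_{\et}(X,\Z_p)\otimes^{L}_{\Z_p}\Z/p^k=R\Gamma_{\et}(X,\Z/p^k)$ yields
\[ R\Gamma_v(X\times Y,\O^+/p^k)\aeq R\Gamma_{\et}(X,\Z/p^k)\otimes_{\Z_p}\O^+(Y).\]
Taking $H^n$, and using $H^n_{\et}(X,\Z/p^k)=H^n_v(X,\Z/p^k)$ for the finite locally constant sheaf $\Z/p^k$, one obtains the claimed almost isomorphism; naturality of each step identifies it with the cup‑product map in the statement.

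For part~2, recall that $\O^{\flat+}=\lim_{\phi}\O^+/p$ as $v$‑sheaves (the presheaf limit is already a sheaf, and $R^1\lim_{\phi}\O^+/p$ is almost zero since $\phi\colon x\mapsto x^p$ is almost surjective on $\O^+/p$ over affinoid perfectoids), and likewise $\O^{\flat+}(Y)=\lim_{\phi}\O^+(Y)/p$. Applying $R\lim_{\phi}$ to part~1 with $k=1$, using that $H^\ast_v(X,\F_p)$ is finite over $\F_p$ (so that $R\lim_{\phi}$ commutes with $H^\ast_v(X,\F_p)\otimes_{\F_p}(-)$ and no $R^1\lim$‑term survives) and that $\phi$ acts as the identity on the $\F_p$‑factor, one gets a $\phi$‑equivariant almost isomorphism
\[ H^n_v(X,\F_p)\otimes_{\F_p}\O^{\flat+}(Y)\aeq H^n_v(X\times Y,\O^{\flat+}).\]

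The main obstacle is the final step of part~2: upgrading this almost isomorphism to an honest one. This has no analogue in part~1 and is exactly where characteristic $p$ is essential. I would argue as follows. Run the \v{C}ech computation directly for $\O^{\flat+}$ over the standard toric cover of $X$; the characteristic‑$p$ Primitive Comparison Theorem (as in \cite{Scholze_p-adicHodgeForRigid}) presents $H^n_v(X,\O^{\flat+})$ as the \emph{finite free} $\O_{K^\flat}$‑module $H^n_v(X,\F_p)\otimes_{\F_p}\O_{K^\flat}$, and freeness makes the base change along $\O_{K^\flat}\to\O^{\flat+}(Y)$ commute with cohomology on the nose (no torsion correction), so $H^n_v(X\times Y,\O^{\flat+})$ is finite projective over $\O^{\flat+}(Y)$ of rank $\dim_{\F_p}H^n_v(X,\F_p)$. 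An almost isomorphism between two finite projective $\O^{\flat+}(Y)$‑modules of the same rank is then an isomorphism, by a determinant argument using that an almost‑unit of $\O^{\flat+}(Y)$ is a unit (its absolute value is $1$ at every point of $\Spa(\O^{\flat+}(Y)[1/\varpi^\flat],\O^{\flat+}(Y))$). Everything else is routine bookkeeping with almost mathematics and with the Frobenius actions, which are built into the construction throughout.
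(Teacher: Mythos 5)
Part 1 of your proposal is correct and is essentially the paper's own argument in derived clothing: the paper deduces it from the long exact sequence of $\O^+\xrightarrow{\cdot p^k}\O^+\to \O^+/p^k$, Proposition~\ref{p:affinoid-perfectoid-primitive-comparison} and the Primitive Comparison Theorem via the $5$-Lemma, while you apply $-\otimes^L_{\Z_p}\Z/p^k$ to a complex-level version of Proposition~\ref{p:affinoid-perfectoid-primitive-comparison}; the ingredients are the same and this part is fine.

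Part 2 contains a genuine gap. The whole point of the paper's step ``take $k=1$ and pass to the inverse limit over Frobenius'' is that this limit upgrades the mod-$p$ almost isomorphism to an \emph{honest} one: Frobenius contracts the almost ideal (an element of $\O^+(Y)/p$ killed by all of $\m$ already has vanishing $p$-th power), so the almost-zero discrepancies are annihilated along the tower. Your $R\lim_\phi$ step concedes only an almost isomorphism and then tries to recover honesty by a different route, and that route does not work as written. (i) The input you invoke --- that the characteristic-$p$ primitive comparison presents $H^n_v(X,\O^{\flat+})$ as the finite \emph{free} module $H^n_v(X,\F_p)\otimes\O_{K^\flat}$ on the nose --- is not what the literature provides: the citable statement is an almost isomorphism, and its honest form is precisely the case $Y=\Spa(K)$ of the corollary you are proving, so this is circular. (ii) Even granting it, the claim that freeness makes base change along $\O_{K^\flat}\to\O^{\flat+}(Y)$ commute with cohomology ``with no torsion correction'' fails: the identification of the \v{C}ech complex of $X\times Y$ with the completed base change of that of $X$ is itself only an almost isomorphism (it rests on the almost description of $\O^{\flat+}$ on products of affinoid perfectoids), so honest finite projectivity of $H^n_v(X\times Y,\O^{\flat+})$ over $\O^{\flat+}(Y)$ is not established. (iii) The lemma ``an almost-unit of $\O^{\flat+}(Y)$ is a unit'' is false in general: any $u\in\O^{\flat+}(Y)$ that is invertible in $\O^{\flat\circ}(Y)$ is an almost-unit, since $u^{-1}\varpi^{\flat 1/p^m}$ is topologically nilpotent and hence lies in $\O^{\flat+}(Y)$, but such $u$ need not be invertible in $\O^{\flat+}(Y)$ when $\O^+\subsetneq\O^\circ$ (which is allowed here, already for $Y=\Spa(K,K^+)$ with $K^+\neq\O_K$); moreover your pointwise justification breaks at higher-rank points, where $|u(y)|\geq|\varpi^\flat(y)|^{1/p^m}$ for all $m$ does not force $|u(y)|=1$. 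The repair is not a determinant argument but to keep the Frobenius structure in the limit: the kernels and cokernels of the $k=1$ comparison are killed by the Frobenius transition maps, and exploiting this contraction in the limit (the paper's route, following Scholze's primitive comparison) is what yields the honest, Frobenius-equivariant isomorphism.
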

\begin{proof}
	The first part follows from Proposition~\ref{p:affinoid-perfectoid-primitive-comparison}.3, using the  sequence $\O^+\to \O^+\to \O^+/p^k$ and the fact that $H^n_{\et}(X,\Z/p^k)=H^n_v(X,\Z/p^k)$ by \cite[Propositions 14.7 and 14.8]{etale-cohomology-of-diamonds}.
	The second part follows from the case of $k=1$ in the inverse limit over Frobenius.
\end{proof}
\begin{Proposition}[K\"unneth formula]\label{p:Kunneth}
	Let $X$ be a smooth proper rigid space and let $Y$ be affinoid perfectoid. Then there is a natural isomorphism for all $n\geq 0$
	\[ H^n_{v}(X\times Y,\F_p)=\Big(H^{n-1}_v(X,\F_p)\otimes H^{1}_v(Y,\F_p)\Big)\oplus \Big(H^{n}_v(X,\F_p)\otimes H^{0}_v(Y,\F_p)\Big).\]
\end{Proposition}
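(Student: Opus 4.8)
The plan is to deduce this K\"unneth formula from the relative Primitive Comparison Theorem, Corollary~\ref{c:PCT-for-XxY}.2, by means of the Artin--Schreier sequence. On the $v$-site of any diamond over $\Spd(K,K^+)$ there is a short exact sequence of $v$-sheaves
\[ 0\to \F_p\to \O^\flat\xrightarrow{\varphi-1}\O^\flat\to 0,\]
where $\O^\flat=\O^{\flat+}[1/\varpi^\flat]$ is the tilted structure sheaf and $\varphi$ its Frobenius: the kernel of $\varphi-1$ is $\F_p$, and $\varphi-1$ is surjective as a map of $v$-sheaves since for a local section $x$ the equation $T^p-T=x$ defines a finite \'etale, hence $v$-, cover over which it acquires a solution. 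Applying this on $X\times Y$ and passing to the long exact $v$-cohomology sequence produces, for every $n\geq 0$, a short exact sequence
\[ 0\to \coker\big(\varphi-1\mid H^{n-1}_v(X\times Y,\O^\flat)\big)\to H^n_v(X\times Y,\F_p)\to \ker\big(\varphi-1\mid H^{n}_v(X\times Y,\O^\flat)\big)\to 0,\]
so the task is reduced to understanding the operator $\varphi-1$ on the $v$-cohomology of $\O^\flat$ on $X\times Y$.

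To this end I would invert $\varpi^\flat$ in Corollary~\ref{c:PCT-for-XxY}.2 --- $v$-cohomology of the qcqs space $X\times Y$ commutes with this filtered colimit --- to obtain for all $n$ a natural, $\varphi$-equivariant isomorphism
\[ H^n_v(X,\F_p)\otimes_{\F_p}\O^\flat(Y)\isomarrow H^n_v(X\times Y,\O^\flat),\]
where $\varphi$ acts on the left by $\id\otimes\varphi$; the point is that $\varphi$ is trivial on the factor $H^n_v(X,\F_p)$ because this factor is the image of the $\varphi$-fixed subsheaf $\F_p\hookrightarrow\O^\flat$ under the comparison isomorphism. As $X$ is proper smooth over the algebraically closed field $K$, each $H^n_v(X,\F_p)$ is a finite-dimensional $\F_p$-vector space by the Primitive Comparison Theorem, so the exact functor $H^n_v(X,\F_p)\otimes_{\F_p}(-)$ commutes with the kernel and cokernel of $\varphi-1$. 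On the other hand $\O^\flat$ is $v$-acyclic on the affinoid perfectoid $Y$, being the structure sheaf of the affinoid perfectoid tilt $Y^\flat$, so the Artin--Schreier sequence on $Y$ gives $\O^\flat(Y)^{\varphi=1}=H^0_v(Y,\F_p)$ and $\O^\flat(Y)/(\varphi-1)=H^1_v(Y,\F_p)$ (and $H^m_v(Y,\F_p)=0$ for $m\geq2$). Feeding this into the previous short exact sequence turns it into
\[ 0\to H^{n-1}_v(X,\F_p)\otimes_{\F_p} H^1_v(Y,\F_p)\to H^n_v(X\times Y,\F_p)\to H^n_v(X,\F_p)\otimes_{\F_p} H^0_v(Y,\F_p)\to 0.\]

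It remains to split this sequence naturally, which I would do with cup products. Writing $\mathrm{pr}_X\colon X\times Y\to X$ and $\mathrm{pr}_Y\colon X\times Y\to Y$ for the two projections, the assignment $v\otimes f\mapsto \mathrm{pr}_X^*v\cup \mathrm{pr}_Y^*f$ provides a section of the surjection above: unravelling the definitions, that surjection is the composite of the map induced by $\F_p\hookrightarrow\O^\flat$ with the isomorphism of the previous paragraph, and under the resulting identification $\mathrm{pr}_X^*v\mapsto v\otimes 1$ and $\mathrm{pr}_Y^*f\mapsto 1\otimes f$ (using $f\in\O^\flat(Y)^{\varphi=1}$), so its composite with the section is the identity of $H^n_v(X,\F_p)\otimes H^0_v(Y,\F_p)$. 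This yields the asserted natural direct sum decomposition. If one wants the isomorphism to be ``cup product'' throughout, one checks in addition --- using the compatibility of cup products with the connecting homomorphism of the Artin--Schreier sequence and the surjectivity of $\delta_Y\colon H^0_v(Y,\O^\flat)\twoheadrightarrow H^1_v(Y,\F_p)$ --- that the submodule $H^{n-1}_v(X,\F_p)\otimes H^1_v(Y,\F_p)$ is equally the image of $\mathrm{pr}_X^*(-)\cup\mathrm{pr}_Y^*(-)$.

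I expect the only delicate point to be the Frobenius bookkeeping in the middle step: one must verify that the isomorphism of Corollary~\ref{c:PCT-for-XxY}.2 is genuinely $\varphi$-equivariant with $\varphi$ acting trivially on $H^n_v(X,\F_p)$, since it is exactly this that allows one to pull the kernel and cokernel of $\varphi-1$ past the finite-dimensional tensor factor $H^n_v(X,\F_p)$. Everything else --- exactness of the Artin--Schreier sequence, $v$-acyclicity of $\O^\flat$ on affinoid perfectoid spaces, and the basic functoriality of cup products --- is formal.
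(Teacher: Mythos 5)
Your argument is correct, and its core is the same as the paper's: both proofs take the $v$-cohomological long exact sequence of the Artin--Schreier sequence $0\to\F_p\to\O^\flat\to\O^\flat\to0$ on $X\times Y$, feed in the Frobenius-equivariant isomorphism of Corollary~\ref{c:PCT-for-XxY}.2, and use the $v$-acyclicity of the (tilted) structure sheaf on the affinoid perfectoid $Y$ to identify the kernel and cokernel of $\varphi-1$ with $H^n_v(X,\F_p)\otimes H^0_v(Y,\F_p)$ and $H^{n-1}_v(X,\F_p)\otimes H^1_v(Y,\F_p)$, yielding the natural extension. The only place where you genuinely diverge is the splitting. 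The paper splits by observing that $\pi_0(Y)$ is profinite and comparing with $Y$ replaced by the strictly totally disconnected space $\underline{\pi_0(Y)}$, where $H^1_v(-,\F_p)$ vanishes, so that pullback along $X\times Y\to X\times\underline{\pi_0(Y)}$ is a natural section; this is purely functorial and needs no multiplicative compatibilities. You instead split by the cup-product map $v\otimes f\mapsto \mathrm{pr}_X^*v\cup\mathrm{pr}_Y^*f$, which is also natural and arguably more explicit (it exhibits the decomposition as the usual K\"unneth map), but it hinges on the point you yourself flag: one must check that the comparison map of Corollary~\ref{c:PCT-for-XxY}.2 is compatible with the $\O^\flat(Y)$-module (cup-product) structure and pullback from $X$, so that $\mathrm{pr}_X^*v\cup\mathrm{pr}_Y^*f$ really maps to $v\otimes f$; this does hold, since that comparison map is constructed \v{C}ech-theoretically from the $\O^+(Y)$-linear extension of pullback from $X$ (proof of Proposition~\ref{p:affinoid-perfectoid-primitive-comparison}), but it is an extra verification the paper's route avoids. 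Both splittings produce the same decomposition, and your version of the Frobenius bookkeeping (Frobenius acting as $\id\otimes\varphi$, finite-dimensionality, or simply exactness of $-\otimes_{\F_p}V$, to pull kernels and cokernels of $\varphi-1$ past the tensor factor) matches what the paper uses implicitly; note also that, like the paper's own proof, your argument uses Corollary~\ref{c:PCT-for-XxY} and hence tacitly assumes $K$ algebraically closed.
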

\begin{proof}
	We consider the $v$-cohomological long exact sequence  for the  Artin--Schreier sequence \[0\to \F_p\to \O^\flat\xrightarrow{\mathrm{AS}} \O^\flat\to 0\] on $X\times Y$. By Corollary~\ref{c:PCT-for-XxY}.2, this yields a long exact sequence
	\[ \dots\xrightarrow{\mathrm{AS}} H^{n-1}(X,\F_p)\otimes\O^{\flat}(Y)\to H^{n}_v(X\times Y,\F_p)\to H^{n}(X,\F_p)\otimes\O^{\flat}(Y)\xrightarrow{\mathrm{AS}} \dots \]
	Since $H^n_v(Y,\O)=0$ for $n\geq 1$, we have $H^1_v(Y,\F_p)=\coker(\mathrm{AS}:\O^{\flat}(Y)\to \O^{\flat}(Y))$ and $H^n_v(Y,\F_p)=0$ for $n\geq 2$. It follows that we can rewrite the above as a natural extension
	\[ 0\to H^{n-1}_v(X,\F_p)\otimes H^{1}_v(Y,\F_p)\to H^n_{v}(X\times Y,\F_p)\to H^{n}_v(X,\F_p)\otimes H^{0}_v(Y,\F_p)\to 0.\]
	Recall that $\pi_0(Y)$ is always a profinite space \cite[Tag 0906]{StacksProject}. By comparing to the case that $Y=\underline{\pi_0(Y)}$ is strictly totally disconnected, in which case $H^1_v(Y,\F_p)=0$, we see that pullback along $X\times Y\to X\times \underline{\pi_0(Y)}$ defines a natural splitting of the last map.
\end{proof}
We use this to complete the second part of Lemma~\ref{l:describing-master-diagram-1}.(E):
\begin{Corollary}\label{c:R^npi_Fp-comparison}
	For any $N\in \N$ and $n\geq 0$, we have a natural isomorphism
	\[ R^n\pi_{\et\ast}^\diamond\Z/N\Z=R^n\pi_{v\ast}^\diamond\Z/N\Z.\]
	If $K$ is algebraically closed, this is isomorphic to $\underline{H^n_{\et}(X,\Z/N\Z)}$, the locally constant sheaf on $\Perf_{K,v}$ associated to the group $H^n_{\et}(X,\Z/N\Z)$.
\end{Corollary}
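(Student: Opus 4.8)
\emph{Plan.} Since $\Z/N\Z$ splits as a finite product of sheaves $\Z/\ell^k\Z$ over the prime powers dividing $N$, and all constructions are compatible with finite products, I would reduce at once to the case $N=\ell^k$ of a prime power. For the comparison $R^n\pi^\diamond_{\et\ast}\Z/N\Z\to R^n\pi^\diamond_{v\ast}\Z/N\Z$ over an arbitrary perfectoid $K$, the plan is to check it is an isomorphism stalkwise: $\Perf_{K,\et}$ has enough points, given by geometric points $\Spa(C,C^+)\to Y$ with $C$ an algebraically closed perfectoid field, and the stalk of $R^n\pi^\diamond_{\tau\ast}\Z/N\Z$ ($\tau\in\{\et,v\}$) at such a point is the colimit of $H^n_\tau((\pi^\diamond)^{-1}(Y'),\Z/N\Z)$ over étale neighbourhoods $Y'$, which by \cite[Proposition~14.9]{etale-cohomology-of-diamonds} equals $H^n_\tau(X\times\Spa(C,C^+),\Z/N\Z)$; here $X\times\Spa(C,C^+)$ is a spatial diamond by \cref{rm:XxY-rep}. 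It then suffices to know that for any locally spatial diamond $Z$ over $K$ the natural map $H^n_\et(Z,\Z/N\Z)\to H^n_v(Z,\Z/N\Z)$ is an isomorphism. For $\ell\neq p$ this is standard; for $\ell=p$ I would reduce (via the long exact sequence of $0\to\F_p\to\Z/p^k\Z\to\Z/p^{k-1}\Z\to 0$ and the five lemma) to $\F_p$, and then deduce it by $v$-descent along an affinoid perfectoid $v$-cover of $Z$ with affinoid perfectoid iterated fibre products, using the Artin--Schreier sequence together with the (almost) acyclicity of $\O^\flat$ on affinoid perfectoids in both topologies.

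Next, to identify the resulting sheaf with the constant sheaf when $K$ is algebraically closed, I would first note that the value $H^n_\et(X,\Z/N\Z)=H^n_\et(X\times\Spa(K),\Z/N\Z)$ is a global section of $R^n\pi^\diamond_{v\ast}\Z/N\Z$, hence defines a morphism $\underline{H^n_\et(X,\Z/N\Z)}\to R^n\pi^\diamond_{v\ast}\Z/N\Z$, which I must show is an isomorphism. When $N=p$ this is read off the Künneth formula \cref{p:Kunneth}: sheafifying the identity
\[H^n_v(X\times Y,\F_p)=\big(H^{n-1}_v(X,\F_p)\otimes H^1_v(Y,\F_p)\big)\oplus\big(H^n_v(X,\F_p)\otimes H^0_v(Y,\F_p)\big)\]
over affinoid perfectoid $Y$, the first summand dies because the presheaf $Y\mapsto H^1_v(Y,\F_p)$ has trivial $v$-sheafification (being the first cohomology of a sheaf), while $H^0_v(-,\F_p)=\underline{\F_p}$; this gives $R^n\pi^\diamond_{v\ast}\F_p=\underline{H^n_v(X,\F_p)}=\underline{H^n_\et(X,\F_p)}$, the last equality by the comparison for the rigid space $X$ from the first step. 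The case $N=p^k$ then follows by dévissage along $0\to\F_p\to\Z/p^k\Z\to\Z/p^{k-1}\Z\to 0$, using that $M\mapsto\underline M$ is exact, and the five lemma. For $\ell\neq p$ I would argue instead on the rigid side: by proper and smooth base change for $\Z/\ell^k\Z$-étale cohomology of rigid spaces, $R^n\pi_{\et\ast}\Z/\ell^k\Z$ restricts on each smooth rigid $Y/K$ to $R^n(\pi_Y)_{\et\ast}\Z/\ell^k\Z$, which — the family $X\times Y\to Y$ being the base change of $X\to\Spa(K)$ — is the constant local system $\underline{H^n_\et(X,\Z/\ell^k\Z)}_Y$; hence $R^n\pi_{\et\ast}\Z/\ell^k\Z=\underline{H^n_\et(X,\Z/\ell^k\Z)}$ on $\mathrm{SmRig}_{K,\et}$, and applying $(-)^\diamond$ together with \cref{c:base-change-for-etale-sheaves} and the first step finishes that case, with the general $N$ recovered by the Chinese Remainder Theorem.

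The hard part is the constancy in the $p$-primary case: a priori $R^n\pi_{\et\ast}\Z/p^k\Z$ on $\mathrm{SmRig}_{K,\et}$ is genuinely \emph{not} locally constant — there is no proper base change for $p$-torsion coefficients on rigid spaces — and it is exactly the passage to the $v$-topology, encoded via the Artin--Schreier sequence in \cref{p:Kunneth}, that collapses the obstructing $\F_p$-torsor contributions and forces $R^n\pi^\diamond_{v\ast}\F_p$, and hence $R^n\pi^\diamond_{\et\ast}\F_p$ by the first step, to be constant. A minor technical point that needs care is the justification — via \cite[Proposition~14.9]{etale-cohomology-of-diamonds} — that the stalks of these relative cohomology sheaves are computed fibrewise over a geometric point $\Spa(C,C^+)$, and that $X\times\Spa(C,C^+)$ is spatial so that the comparison for locally spatial diamonds applies.
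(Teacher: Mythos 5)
Your overall architecture --- reduce to prime powers by CRT, treat $\ell\neq p$ by base change and $\ell=p$ via the K\"unneth formula \cref{p:Kunneth} followed by d\'evissage --- matches the paper's, and your observation that sheafification kills the summand $H^{n-1}_v(X,\F_p)\otimes H^1_v(Y,\F_p)$ is exactly how Proposition~\ref{p:Kunneth} is used there. The genuine problem is your first step, the stalkwise proof that $R^n\pi^\diamond_{\et\ast}\Z/N\Z\to R^n\pi^\diamond_{v\ast}\Z/N\Z$ is an isomorphism; two of its ingredients do not hold as stated. First, \cite[Proposition~14.9]{etale-cohomology-of-diamonds} computes colimits of \emph{\'etale} cohomology along cofiltered limits of spatial diamonds; it does not tell you that the stalk of $\nu_\ast R^n\pi^\diamond_{v\ast}\Z/N\Z$ at a geometric point $\xi=\Spa(C,C^+)\to Y$ is $H^n_v(X\times\Spa(C,C^+),\Z/N\Z)$. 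That stalk is $\varinjlim_{Y'}(R^n\pi^\diamond_{v\ast}\Z/N\Z)(Y')$ over \'etale neighbourhoods $Y'$, and each section $(R^n\pi^\diamond_{v\ast}\Z/N\Z)(Y')$ differs from $H^n_v(X\times Y',\Z/N\Z)$ by Leray terms $H^i_v(Y',R^j\pi^\diamond_{v\ast}\Z/N\Z)$ with $i>0$, which you have not shown to vanish in a colimit over \emph{\'etale} neighbourhoods (the point $\Spa(C,C^+)$ still has a large $v$-site). Second, your proof of the input ``$H^n_\et(Z,\F_p)=H^n_v(Z,\F_p)$ for every locally spatial diamond $Z$'' by ``$v$-descent along an affinoid perfectoid $v$-cover'' cannot work: a $v$-(hyper)cover computes $v$-cohomology, not \'etale cohomology, so it is incapable of comparing the two topologies. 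The statement itself is true, but as a cited foundational fact ($F\isomarrow R\nu_\ast\nu^\ast F$ for \'etale sheaves on locally spatial diamonds, \cite{etale-cohomology-of-diamonds}), or, for the spaces $X\times Y$ at hand, via a pro-finite-\'etale Cartan--Leray argument as in Proposition~\ref{p:bOx-cohom-et-vs-v} --- not via $v$-descent. Finally, even granting that the two \emph{presheaves} $Y\mapsto H^n_\tau(X\times Y,\Z/N\Z)$ agree, you still must show the \'etale sheafification is already a $v$-sheaf before concluding that the two \emph{sheafifications} agree.

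The paper sidesteps all of this. For $N$ coprime to $p$ it invokes \cite[Theorem~16.1.(iii), Proposition~16.6]{etale-cohomology-of-diamonds} directly, giving $\nu^\ast R^n\pi^\diamond_{\et\ast}\Z/N\Z\isomarrow R^n\pi^\diamond_{v\ast}\Z/N\Z$ together with $\nu_\ast\nu^\ast=\id$ in one stroke. For $N=p$ it derives the \'etale-versus-$v$ comparison as a \emph{consequence} of constancy rather than as a prerequisite: once $R^n\pi^\diamond_{v\ast}\F_p$ is identified with a constant sheaf by K\"unneth, it is in particular pulled back from an \'etale sheaf and the comparison follows. Your $\ell\neq p$ route via rigid proper base change plus Corollary~\ref{c:base-change-for-etale-sheaves} is a workable alternative for the constancy claim, but it too leans on the broken first step to pass from \'etale to $v$. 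If you replace that first step by the citations above and reorder the $p$-primary argument so that constancy comes first, the remainder of your proof goes through.
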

\begin{proof}
	For $N$ coprime to $p$, this follows from general base-change results for the diagram
	\[ \begin{tikzcd}
		X^\diamond_v \arrow[r, "\pi_v^\diamondsuit"] \arrow[d] & \Perf_{K,v} \arrow[d, "\nu"] \\
		X^\diamond_{\et} \arrow[r, "\pi^\diamondsuit_{\et}"] & \Perf_{K,\et},
	\end{tikzcd}\]
	namely by 
	\cite[Theorem 16.1.(iii) and Proposition~16.6]{etale-cohomology-of-diamonds}, the base-change morphism
	\[\nu^\ast R^n\pi_{\et\ast}^\diamond\Z/N\Z\to R^n\pi_{v\ast}^\diamond\Z/N\Z\]
	is an isomorphism, and $\nu_{\ast}\nu^\ast F=F$ for any sheaf on $\Perf_{K,\et}$ by \cite[Proposition 14.7]{etale-cohomology-of-diamonds}. 
	
	The last sentence of the Corollary is clear when $K^+=\O_K$ since any sheaf on $\Spa(K,\O_K)_\et$ is constant. The general case follows from this: Let $j:\Spa(K,\O_K)\hookrightarrow \Spa(K,K^+)$ be the natural open immersion, then it follows from \cite[Proposition 8.1.2.(ii)]{huber2013etale} that \[R^n\pi_{\et\ast}\Z/N\Z=j_{\ast}j^{\ast}R^n\pi_{\et\ast}\Z/N\Z=j_{\ast}\underline{H^n_{\et}(X,\Z/N\Z)}=\underline{H^n_{\et}(X,\Z/N\Z)}.\]
	
	For $N$ a power of $p$, we can reduce by induction to the case of $N=p$. Then $R^n\pi_{v\ast}^\diamond\F_p$ is the $v$-sheafification of 
	\[ Y\mapsto H^n_v(X\times Y,\F_p).\]
	By Proposition~\ref{p:Kunneth}, this is the locally constant sheaf of $H^n_{\et}(X,\F_p)=H^n_v(X,\F_p)$.
\end{proof}
At this point, we can complete the proof of Lemma~\ref{l:describing-master-diagram-2}:
\begin{Corollary}\label{c:part-E-of-master-lemma-2}
	For any $n\geq 0$, the following morphisms are isomorphisms:
	\[(R^n\pi_{\et\ast}\mu_{p^\infty})^\diamond\to R^n\pi_{\et\ast}^\diamond\mu_{p^\infty}\to R^n\pi_{v\ast}^\diamond\mu_{p^\infty}\]
\end{Corollary}
\begin{proof}
	By quasi-compactness, it suffices to prove this for $\mu_{p^\infty}$ replaced by $\mu_{p^m}$. We can check the statement locally on $\Perf_{K,\et}$, and may therefore assume that $K$ contains  $\mu_{p^m}(\overline{K})$. Then $\mu_{p^m}\cong \underline{\Z/p^m\Z}$ and the statement follows from \cref{c:base-change-for-etale-sheaves} and \cref{c:R^npi_Fp-comparison}.
\end{proof}
\begin{proof}[Proof of Lemma~\ref{l:describing-master-diagram-2}]
	For Part  (D), we use that by \cref{c:pi_astO^+}, the left-exact sequence
	\[ 0\to \pi^{\diamondsuit}_{\et \ast}\mu_{p^\infty}\to \pi^{\diamondsuit}_{\et \ast}\Oone\to \pi^{\diamondsuit}_{\et \ast}\O \to 0\]
	 is also right-exact.
	The analogous statements hold for $\pi^{\diamondsuit}_{v \ast}\O$ and $(\pi_{\et \ast}\O)^{\diamondsuit}$. This implies Lemma~\ref{l:describing-master-diagram-2}.(D).
	Part (E) is \cref{{c:part-E-of-master-lemma-2}}, (F) is \cref{p:HTses-of-R^1pi-for-O-s4}. 
\end{proof}

\subsection{Cohomology of $\bOx$}
We now move on to proving the remaining parts of Lemma~\ref{l:describing-master-diagram-1} concerning the sheaf $\bOx$. For some relevant background information on this sheaf, the interested reader might find it helpful  to look at \cite[\S2.3-\S2.4]{heuer-v_lb_rigid} on which some arguments in the following are based. 
We begin with some preparations.

\begin{Lemma}\label{l:bOx-locally-constant}
	Let $X$ be a rigid space over an algebraically closed field $K$. Then evaluation at points in $X(K)$ induces a unique injective map fitting into the commutative diagram:
	\[
	\begin{tikzcd}
		\O^\times(X) \arrow[d] \arrow[r] & {\Map_{\cts}(X(K),K^\times)} \arrow[d] \\
		\bOx(X) \arrow[r,hook,dotted]                & {\Map_{\lc}(X(K),K^\times/(1+\m)).}    
	\end{tikzcd}.\]
\end{Lemma}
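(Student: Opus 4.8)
The plan is to construct the dotted arrow by hand, by evaluating a local lift of a $\bOx$-section to a unit and reducing modulo $1+\m$; everything except injectivity will then be formal, and injectivity is where the real work lies. So, given an open $V\subseteq X$, a section $\bar f\in\bOx(V)$ and a point $x\in V(K)$, I would choose an affinoid open neighbourhood $W\ni x$ on which $\bar f$ lifts to a unit $f\in\O^\times(W)$ --- possible since $\O^\times\to\bOx$ is an epimorphism of analytic sheaves with kernel $U:=1+\m\O^+$ --- and set $\phi(\bar f)(x):=f(x)\bmod(1+\m)$. To see this is well defined, note that two lifts of $\bar f$ over $W$ differ by a section of $U(W)=1+\m\O^+(W)$, and a point of $W(K)$ is a continuous $K$-algebra map $\O(W)\to K$ carrying $\O^+(W)$ into $K^+$, hence $\m\O^+(W)$ into $\m$; so the two evaluations agree modulo $1+\m$, and $\phi(\bar f)$ is a well-defined function $V(K)\to K^\times/(1+\m)$. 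By construction the square commutes, and $\phi$ is the unique map doing so, since $\O^\times\to\bOx$ is a local epimorphism and $V\mapsto\Map(V(K),K^\times/(1+\m))$ is a separated presheaf on the analytic site of $X$. Finally, $1+\m=\{a\in K:|a-1|<1\}$ is open in $K^\times$, so $K^\times/(1+\m)$ is discrete; since $x\mapsto f(x)$ is continuous, $\phi(\bar f)$ is locally constant, and the dotted arrow therefore lands in $\Map_{\lc}$.

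It remains to prove injectivity, and this is again local on $X$, so I may assume $X=\Spa(A,A^+)$ is a small affinoid and --- since passing to $X_{\mathrm{red}}$ changes neither $\bOx$ (the nilradical sheaf being contained in $\m\O^+$) nor $X(K)$ --- that $A$ is reduced, with $\bar f$ the image of some $f\in A^\times$. The hypothesis $\phi(\bar f)=1$ reads $|f(x)-1|<1$ for every $x\in X(K)$. By the maximum modulus principle this forces $\|f-1\|_{\sup}<1$; then, choosing $\lambda\in K^\times$ with $\|f-1\|_{\sup}\le|\lambda|<1$ (possible as $K$, being perfectoid, is not discretely valued), one writes $f-1=\lambda\cdot g$ with $g$ of spectral norm $\le 1$, so that $f-1\in\m\O^+(X)$, i.e.\ $f\in U(X)$ and $\bar f=1$.

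The main obstacle is precisely the use of the maximum modulus principle: up to that point the argument is a formal consequence of the exact sequence $0\to U\to\O^\times\to\bOx\to 0$ together with the openness of $1+\m$, but injectivity genuinely requires that the $K$-rational points of $X$ be abundant enough to compute the spectral seminorm on a reduced affinoid --- i.e.\ that $X(K)$ is Zariski dense, which is the point at which one uses, for instance, that $K$ is algebraically closed.
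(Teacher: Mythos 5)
Your overall strategy is the same as the paper's: the map is evaluation at $K$-points, local constancy comes from the discreteness of $K^\times/(1+\m)$, and everything hinges on the Maximum Modulus type statement that a unit $f$ with $f(x)\in 1+\m$ for all $x\in X(K)$ lies in $U(X)=1+\m\O^+(X)$. Two steps, however, are not justified as written. First, your construction of the dotted arrow rests on the assertion that $\O^\times\to\bOx$ is an epimorphism of \emph{analytic} sheaves, so that a section of $\bOx$ lifts to a unit on an affinoid neighbourhood of each point. But $\bOx$ is by definition the quotient sheaf in the \'etale topology on $\mathrm{SmRig}_{K,\et}$ (and is even a $v$-sheaf), so a section is a priori only \'etale-locally a unit; the obstruction to an analytic-local lift over a small affinoid $W$ is a class in $\ker\bigl(H^1_{\et}(W,U)\to H^1_{\et}(W,\G_m)\bigr)$, and its vanishing is precisely the nontrivial input at this point. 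The paper supplies it by reducing to affinoid $X$ and invoking the identification $\bOx(X)=\O^\times(X)/U(X)$ there (a result from \cite{heuer-v_lb_rigid}); your parenthetical ``since $\O^\times\to\bOx$ is an epimorphism of analytic sheaves'' assumes exactly what has to be proved.

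Second, your injectivity argument needs the sup seminorm of a reduced affinoid to be computed, indeed attained, on $K$-rational points, and you concede that this is where one uses, ``for instance, that $K$ is algebraically closed''. The lemma, however, is stated for an arbitrary perfectoid field $K$, which need not be algebraically closed, and for an arbitrary rigid space $X$, which may have few or no $K$-points (if $X(K)=\emptyset$ the target group is trivial, so some abundance of $K$-points is genuinely needed for injectivity). So, as a blind proof of the statement, your argument only delivers the lemma under a density hypothesis on $X(K)$, not in the stated generality. To be fair, the paper's own proof rests on the same Maximum Modulus step over $X(K)$ (with only a brief aside about the residue field of $K$), so this delicacy is shared by both arguments rather than being a divergence in method; but you should state explicitly that you prove the lemma under such a hypothesis. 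Apart from these two points --- the affinoid computation of $\bOx$ and the availability of enough $K$-points --- your argument follows the paper's route, with more detail on well-definedness, uniqueness and local constancy than the paper gives.
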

\begin{proof}
	The first arrow is given by interpreting $f\in \O^\times(X)$ as a morphism $X\to \G_m$ and evaluating on $K$-points. By the Maximum Modulus Principle, this sends $f\in \O^\times(X)$ into $\Map_{\cts}(X(K),1+\m)$ if and only if $f\in \Oone(X)$.
	It now suffices to construct the bottom map for affinoid $X$, where $\bOx(X)=\O^\times\tf(X)/\Oone(X)$ by \cite[Lemma~2.19]{heuer-v_lb_rigid}
\end{proof}
\begin{Lemma}\label{l:H^1(bOx)-on-XxY-insensitive-to-sec-factor}
	Let $X$ be a smooth proper rigid space over $K$ that is geometrically connected.
	\begin{enumerate}
		\item For $Y$ any smooth rigid space over $K$, we have $\bOx(X\times Y)=\bOx(Y)$.
		\item For $Y$ any perfectoid space over $K$, we have $\bOx(X\times Y)=\bOx(Y)$.
	\end{enumerate}
	In particular, we have $(\pi_{\et\ast}\bOx)^\diamondsuit=\pi^\diamondsuit_{\et\ast}\bOx=\pi^\diamondsuit_{v\ast}\bOx=\bOx$.
\end{Lemma}
\begin{proof}
	We first explain how to deduce part 2 from part 1: The statement is local on $Y$, so we can assume that $Y$ is affinoid perfectoid. By \Cref{l:rigid-approx-of-aff-spaces},  we can then find an inverse system of affinoid smooth rigid spaces $(Y_i)_{i\in I}$ over $K$ such that $Y\approx \varprojlim Y_i$. Assuming part~1, we then have by \Cref{p:rigid-approxi-for-bOx}:
	\[ \bOx(X\times Y)=\varinjlim_{i\in I}\bOx(X\times Y_i)=\varinjlim_{i\in I}\bOx(Y_i)=\bOx(Y).\]
	
	Next, let us explain the last sentence of the Lemma: Recall that we had already seen in \Cref{c:Lemma-B} that $(\pi_{\et\ast}\bOx)^\diamondsuit=\pi^\diamondsuit_{\et\ast}\bOx$. That $\pi^\diamondsuit_{\et\ast}\bOx=\pi^\diamondsuit_{v\ast}\bOx$ follows from \Cref{l:bOx-et-vs-v-on-XxY}. Part~2 implies $\pi^\diamondsuit_{v\ast}\bOx =\bOx$ immediately from the definition.
	
	It thus remains to prove part 1. For this, we can assume that $Y$ is affinoid and connected. Second, we can without loss of generality assume that $K$ is algebraically closed: Let $\overline{K}$ be an algebraic closure of $K$ and let $C$ be its completion. Then to deduce the general case from that over $C$, let $G:=\mathrm{Gal}(\overline{K}|K)$ and consider the $G$-torsor $X\times Y_C\to X\times Y$. Assuming part 1 for $C$, and using that $\bOx$ is a v-sheaf on the smooth rigid space $X\times Y$ by \Cref{l:bOx-et-vs-v-on-XxY} (applied with ``$Y$'' in \Cref{l:bOx-et-vs-v-on-XxY} being $\Spa(K)$), we then have
	\[\bOx(X\times Y)=\bOx(X\times Y_C)^G=\bOx(Y_C)^G=\bOx(Y).\]
	
	Now for  algebraically closed $K$, we start with $Y=\Spa(K)$. In this case, using that $X$ is connected, we compare to the universal pro-\'etale cover $\wt X\to X$ of \cite[\S 4]{heuer-v_lb_rigid}: Using the exponential sequence $0\to \O\to \O^\times\tf\to \bOx\to 1$ from \cite[Lemma 2.18]{heuer-v_lb_rigid}, we have a commutative diagram
	\[
	\begin{tikzcd}
		\O^\times\tf(\wt X) \arrow[r]           & \bOx(\wt X) \arrow[r]             & {H^1(\wt X,\O)=0}     \\
		\O^\times\tf(X) \arrow[r] \arrow[u] & \bOx(X) \arrow[u, hook] \arrow[r] & {H^1_\et(X,\O)} \arrow[u]
	\end{tikzcd}\]
	in which by \cite[Proposition~3.10]{heuer-v_lb_rigid}, the top row can be identified with the sequence
	\[K^\times\tf\to K^\times/(1+\m)\to 1.\]
	In particular, the first vertical arrow is surjective. The second vertical arrow is injective since $\wt X\to X$ is a Galois cover. This shows that $\bOx(X)=\bOx(\wt X)=K^\times/(1+\m)$. In particular, the boundary map $\bOx(X)\to H^1_\et(X,\O)$ vanishes.
	
	We now move on to the case of general affinoid and connected smooth rigid spaces $Y$: We will show that the boundary map $\partial$ of the exponential sequence
	\[0\to	H^0(X\times Y,\O)\to H^0(X\times Y,\O^\times)\tf\to H^0(X\times Y,\bOx)\xrightarrow{\partial} H^1_{\et}(X\times Y,\O) \]
	vanishes as well. This implies the desired result: We already know from \Cref{c:pi_astO^+} that the first two terms identify with $\O(Y)$ and $\O^\times(Y)\tf$. Comparing to the same sequence for $X=\Spa(K)$, we see that their quotient is $\bOx(Y)$ because $H^1_\et(Y,\O)=0$.
	
	To see that $\partial=0$, we can without loss of generality assume that $(K,K^+)=(K,\O_K)$: Indeed, pullback along $\Spa(K,\O_K)\to \Spa(K,K^+)$ only changes the integral subrings, so the comparison map $H^1_{\et}(X\times Y,\O)\to H^1_{\et}(X\times Y\times_{\Spa(K,K^+)}\Spa(K,\O_K),\O)$ is an isomorphism.
	
	The idea is now to compare the boundary map of the exponential sequence for $X$ and $X\times Y$ via the pullback along $X\to X\times Y$ for points in $Y(K)$. This results in a commutative diagram:
	\[
	\begin{tikzcd}
		H^0(X\times Y,\bOx)\arrow[d] \arrow[r] & {H^1_{\et}(X\times Y,\O)} \arrow[d,hook]  \\
		{\Map(Y(K),\bOx(X))} \arrow[r]  & {\Map(Y(K),H^1_{\et}(X,\O))}
	\end{tikzcd}\]
	By Proposition~\ref{p:affinoid-perfectoid-primitive-comparison}.1, we know that
	\[H^1_\et(X\times Y,\O)=H^1_{\et}(X,\O)\otimes_{K}\O(Y).\]
	Second, since $H^1_{\et}(X,\O)$ is a finite dimensional $K$-vector space, we have 
	\[ \Map(Y(K),H^1_{\et}(X,\O))=H^1_{\et}(X,\O)\otimes_K\Map(Y(K),K).\]
	This shows that the right vertical map can be identified with $H^1(X,\O)\otimes_K-$ applied to the evaluation map $\O(Y)\to \Map(Y(K),K)$. This is injective since $Y$ is a reduced affinoid classical rigid space: Indeed, if $f\in \O(Y)$ satisfies $f(x)=0$ for all $x\in Y(K)$, then since $K$ is algebraically closed, its supremum norm is $0$, which implies $f=0$ when $Y$ is reduced \cite[\S6.2 Proposition 4.(iii)]{BGR}. Thus the right vertical map is injective. But the bottom map is $=0$ by the case of $Y=\Spa(K)$. Hence the top map vanishes, as we wanted to see.
\end{proof}
Finally, we turn to the remaining part of  Lemma~\ref{l:describing-master-diagram-1} (B), about the top morphism. We need to compare \'etale and $v$-cohomology of $\bOx$ on products of $X$ with perfectoid spaces:
\begin{Proposition}\label{p:bOx-cohom-et-vs-v}
	Let $\mathcal F=\O^{+a}/p$ and $n\in \N$; or $\mathcal F=\bOx$  and $n\in \{0,1\}$.
	\begin{enumerate}
		\item Let $X$ be a smooth qcqs rigid space and let $Y$ be an affinoid perfectoid space over $K$. Then
		\[H^n_{\et}(X\times Y,\mathcal F)=H^n_{v}(X\times Y,\mathcal F).\]
		\item Let $Y$ be any spatial diamond over $K$. Let $\wt X=\varprojlim_{i\in I} X_i$ be a diamond which is a limit of smooth qcqs rigid spaces over $K$ with finite \'etale transition maps. Then
		\[\textstyle\varinjlim_i H^n_{v}(X_i\times Y,\mathcal F)= H^n_{v}(\wt X\times Y,\mathcal F).\]
	\end{enumerate}
\end{Proposition}
Part 1 is proved in much greater generality in \cite[Proposition~2.14]{heuer-G-torsors-perfectoid-spaces}, but we will in the next section also need part 2, from which it is easy to deduce part 1 independently.
\begin{proof}
	The proof will be completed in several steps:
	\begin{step}
	We first observe that if $Y$ is any qcqs perfectoid space and  $S=\varprojlim S_i$ is a profinite space, then  we can apply \Cref{c:tilde-limit-b0x} to $S\times Y\sim \varprojlim S_i\times Y$ to see that
	\[H^n_{\et}(S\times Y,\mathcal F)=\varinjlim H^n_{\et}(S_i\times Y,\mathcal F)=\varinjlim\Map(S_i,H^n_{\et}(Y,\mathcal F)).\]
	\end{step}
	\begin{step}
		When $Y$ is affinoid perfectoid, then we have
	\[\varinjlim H^n_{\et}(X_i\times Y,\mathcal F)= H^n_{v}(\wt X\times Y,\mathcal F).\]
	\end{step}
	\begin{proof}
	Choose any element $0\in I$.
	By a \cH-argument, it suffices to prove the statement after replacing $X_0$ by a qcqs open $U$ that admits a perfectoid (say, toric) cover $X_{\infty,0}\sim\varprojlim X_{j,0}\to X_0$ with pro-finite-\'etale Galois group $G=\varprojlim G_j$.
	
	Let $X_{j,i}:=X_{j,0}\times_{X_0}X_i$ and to simplify notation let $Z_{j,i}:=X_{j,i} \times Y$. Furthermore, let
	\[ \wt Z_j:=\wt X\times_{X_0}Z_{j,0}=\textstyle\varprojlim_i Z_{j,i}.\]
	In this notation, our space $\wt X\times Y$ is $\wt Z_{0}$. In summary, we have a commutative diagram
	\[\begin{tikzcd}
		\wt Z_\infty \arrow[d] \arrow[r] & Z_{\infty,0} \arrow[d] \\
		\wt Z_0  \arrow[r] & Z_{0,0}
	\end{tikzcd}\]
	in which the left map is a pro-finite-\'etale $G$-torsor under a perfectoid space, and the top morphism is a pro-finite-\'etale morphisms of perfectoid spaces.

	Since $\wt Z_\infty$ is perfectoid  we have 
	$H^n_{v}(\wt Z_\infty,\O^+/p)\aeq H^n_{\et}(\wt Z_\infty,\O^+/p)$ because $\O^+/p$ is almost acyclic on affinoid perfectoid spaces. The same holds for $\bOx$ when $n\in \{0,1\}$: This follows from the exponential sequence of \cite[Lemma~2.18]{heuer-v_lb_rigid}, using that $H^1_{\et}(T,\O^\times)=H^1_{v}(T,\O^\times)$ for any perfectoid space $T$ (see \cite[Theorem 3.5.8]{KedlayaLiu-rel-p-p-adic-Hodge-I} or \cite[Lemma 17.1.8]{ScholzeBerkeleyLectureNotes}). This is where we use the assumption that $n\in \{0,1\}$ when dealing with $\bOx$.
	
	We endow $H^n_{\et}(\wt Z_\infty,\O^+/p)$ with the discrete topology.
	By Step 1, we then have for $k\in \N$: \[H^n_{\et}(\wt Z_\infty\times G^k,\O^+/p)=\Map_{\cts}(G^k,H^n_{\et}(\wt Z_\infty,\O^+/p)).\]
	It follows that the \cH-to-sheaf spectral sequence of $\wt Z_\infty\to \wt Z_0$ is a Cartan--Leray spectral sequence in the almost category
	\[ H^n_{\cts}(G,H^m_{\et}(\wt Z_\infty,\O^+/p))\Rightarrow H^{n+m}_v(\wt Z_0,\O^+/p).\]
	Since $\wt Z_\infty=\varprojlim Z_{\infty,i}\to Z_{\infty,0}$ is a pro-finite-\'etale morphism of perfectoid spaces, we have
	\[ H^m_{\et}(\wt Z_\infty,\O^+/p)=\varinjlim_i H^m_{\et}(Z_{\infty,i},\O^+/p)=\varinjlim_i\varinjlim_j H^m_{\et}(Z_{j,i},\O^+/p)\]
	 by \Cref{c:tilde-limit-b0x}.
	We deduce by \cite[Proposition 1.2.5]{NeuSchWin} that we now have for any $n,m\geq 0$:
	\[ H^n_{\cts}(G,H^m_{\et}(\wt Z_\infty,\O^+/p))=\varinjlim_i\varinjlim_j H^n(G_j, H^m_{\et}(Z_{j,i},\O^+/p)).\]
	But these are the terms appearing in the usual \'etale Cartan--Leray sequence for $Z_{j,i}\to Z_{0,i}$:
	\[ H^n(G_j,H^m_{\et}(Z_{j,i},\O^+/p))\Rightarrow H^{n+m}_{\et}(Z_{0,i},\O^+/p).\]
	Thus the abutment of the first sequence is
	$=\varinjlim_iH^{n+m}_{\et}(Z_{0,i},\O^+/p)$,
	as we wanted to see.
	
	The case of $\bOx$ is similar, but instead using only the $5$-term exact sequence
	\[0\to H^1_{\cts}(G,\bOx(\wt Z_{\infty}))\to H^1_{v}(\wt Z_0,\bOx)\to H^1_{\et}(\wt Z_\infty,\bOx)^{G}\to  H^2_{\cts}(G,\bOx(\wt Z_{\infty}))\]
	which by the above arguments is the colimit over $i$ and $j$ of
	\[0\to H^1_{\cts}(G_j,\bOx(Z_{j,i}))\to H^1_{\et}(Z_{0,i},\bOx)\to H^1_{\et}( Z_{j,i},\bOx)^{G_j}\to  H^2_{\cts}(G_j,\bOx(Z_{j,i})).\]
	This finishes the proof of Step 2.
	\end{proof}
	\begin{step}
		Part 1 holds.
	\end{step}
	\begin{proof}
		This follows from Step 2 as the special case of $\wt X=X$.
	\end{proof}
	\begin{step}
		Part 2 holds for affinoid perfectoid $Y$.
	\end{step}
	\begin{proof}
		It follows from part 1 that the left hand side of the statement of part 2 is equal to $\varinjlim H^n_{\et}(X_i\times Y,\mathcal F)$. Thus this follows from Step 2.
	\end{proof}
	
	\begin{step}
		Part 2 holds for general spatial diamonds.
	\end{step}
	\begin{proof}Any spatial diamond admits a cover $\wt Y\to Y$ by an affinoid perfectoid space $\wt Y$ such that all finite products $\wt Y\times_Y\dots\times_Y\wt Y$ are affinoid perfectoid (e.g\ use \cite[Propositions~11.5, 11.14 and Lemma~7.19]{etale-cohomology-of-diamonds}). Comparing the \cH-to-sheaf spectral sequence
	\[ \cH^n(\wt Y\to Y,H^m_{v}(X_i\times -,\mathcal F))\Rightarrow H^{n+m}_{v}(X_i\times Y,\mathcal F)\]
	to the sequence for $X_i$ replaced by $\wt X$, we deduce the result from Step 4.
	\end{proof}
	This finishes the proof of \Cref{p:bOx-cohom-et-vs-v}.
\end{proof}

Towards \cref{l:describing-master-diagram-1}, we can use this to describe the cohomology sheaves:
	\begin{Lemma}\label{l:explicit-description-of-R^1pi_{tau*}F}
		Let $X,Y$ be locally spatial diamonds over $K$ with $X(K)\neq \emptyset$. Let $\tau$ be either the \'etale or the $v$-topology and let $F$ be a $\tau$-sheaf on $\LSD_K$ such that the pullback $F\to \pi_{\ast}F$ of sheaves on $Y_\tau$ along $\pi:X\times Y\to Y$ is an isomorphism. Then the Leray sequence
		\[ 0\to H^1_{\tau}(Y,\pi_{\ast}F)\to H^1_{\tau}(X\times Y,F)\to R^1\pi_{\tau\ast}F(Y)\to 1\]
		is a short exact sequence.
	\end{Lemma}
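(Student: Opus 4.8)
The plan is to read the sequence off the Leray spectral sequence of $\pi\colon X\times Y\to Y$ and to kill the only possible obstruction using the $K$-point. First I would invoke the Leray spectral sequence for the morphism of $\tau$-sites $\pi$,
\[ E_2^{p,q}=H^p_\tau(Y,R^q\pi_{\tau\ast}F)\ \Longrightarrow\ H^{p+q}_\tau(X\times Y,F),\]
with its five-term exact sequence
\[ 0\to H^1_\tau(Y,\pi_{\tau\ast}F)\to H^1_\tau(X\times Y,F)\to H^0_\tau(Y,R^1\pi_{\tau\ast}F)\xrightarrow{d_2} H^2_\tau(Y,\pi_{\tau\ast}F)\xrightarrow{\ e\ } H^2_\tau(X\times Y,F).\]
Since $R^1\pi_{\tau\ast}F(Y)=H^0_\tau(Y,R^1\pi_{\tau\ast}F)$ by definition of the higher pushforward, the assertion of the Lemma is equivalent to the vanishing of $d_2$, and for that it suffices to prove that the edge map $e$ is injective.

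Next I would identify $e$ with a pullback map. Write $F$ also for the restriction of $F$ to the big $\tau$-site over any locally spatial diamond over $K$; then $\pi^\ast F=F$ and $s^\ast F=F$ for any section $s$ of $\pi$, since all of these send a diamond $U$ to $F(U)$. In particular $\pi_{\tau\ast}F=\pi_{\tau\ast}\pi^\ast F$, and the map $F\to\pi_{\tau\ast}F$ assumed to be an isomorphism is exactly the unit $\eta$ of the adjunction $(\pi^\ast,\pi_{\tau\ast})$ (pullback of sections along the projection). By the standard description of Leray edge maps, $e$ is the composite $H^2_\tau(Y,\pi_{\tau\ast}F)\xrightarrow{\pi^\ast}H^2_\tau(X\times Y,\pi^\ast\pi_{\tau\ast}F)\xrightarrow{\varepsilon_\ast}H^2_\tau(X\times Y,F)$; precomposing with the isomorphism $\eta_\ast$ and using the triangle identity $\varepsilon\circ\pi^\ast\eta=\id$ identifies $e$ (up to the isomorphism $\eta_\ast$) with the pullback $\pi^\ast\colon H^2_\tau(Y,F)\to H^2_\tau(X\times Y,F)$.

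Finally I would bring in the hypothesis $X(K)\neq\emptyset$: a point $x_0\in X(K)$, i.e.\ a morphism $\Spd(K,K^+)\to X$, determines a section $s\colon Y\to X\times Y$ of $\pi$, obtained by applying $x_0$ along the structure morphism $Y\to\Spd(K,K^+)$ in the first coordinate and the identity in the second, so that $\pi\circ s=\id_Y$. Functoriality of cohomological pullback then gives $s^\ast\circ\pi^\ast=(\pi s)^\ast=\id$ on $H^\ast_\tau(Y,F)$, so $\pi^\ast$, and hence $e$, is split injective. Thus $d_2=0$, and the five-term sequence collapses to the asserted short exact sequence. The only genuinely delicate point is the bookkeeping in the middle step — matching the Leray edge map with $\pi^\ast$, and checking the compatibility of the various restriction and pullback identifications on the big sites — but this carries no real content beyond the triangle identities for $(\pi^\ast,\pi_{\tau\ast})$ and the naturality of the five-term exact sequence; everything else is formal.
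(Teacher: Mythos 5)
Your proposal is correct and is essentially the paper's own argument: both read off the five-term Leray exact sequence, use the hypothesis $F\isomarrow\pi_{\ast}F$ to identify the edge map $H^2_\tau(Y,\pi_{\ast}F)\to H^2_\tau(X\times Y,F)$ with the pullback $\pi^\ast$, and use a $K$-point of $X$ to produce a section of $\pi$ splitting $\pi^\ast$, hence killing $d_2$. The extra bookkeeping you flag (triangle identities for the edge map) is exactly the routine verification the paper leaves implicit.
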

	\begin{proof}
		This is a standard argument that we learned from Gabber's simplification of \cite[Lemma~5]{Geisser-PicardScheme}:
		The full Leray 5-term exact sequence is of the form
		\[0\to H^1_{\tau}(Y,\pi_{\ast}F)\to H^1_{\tau}(X\times Y,F)\to R^1\pi_{\ast}F(Y)\to H^2_{\tau}(Y,\pi_{\ast}F)\to H^2_{\tau}(X\times Y,F).\]
		By assumption, the last map agrees with the pullback map
		\[\pi^{\ast}:H^2_{\tau}(Y,F)\to H^2_{\tau}(X\times Y,F).\]
		Any point $x\in X(K)$ now defines a splitting of $\pi^{\ast}$, showing that this map is injective. 
	\end{proof}

\begin{proof}[Proof of \cref{l:describing-master-diagram-1}.(A)-(B)]
	For part (A), we consider the a priori left exact sequence
	\[	1\to \pi^\diamondsuit_{\et\ast}\Oone\to {\pi^\diamondsuit_{\et\ast}\O^\times\to \pi^\diamondsuit_{\et\ast}\bOx} \to 1.\]
	By \cref{c:pi_astO^+} and
	\cref{l:H^1(bOx)-on-XxY-insensitive-to-sec-factor}, this gets identified with
	\[ 1\to \Oone\to \O^\times\to \bOx \to 1,\]
	which is short exact. Hence the boundary map $\pi^\diamondsuit_{\et\ast}\bOx\to R^1\pi^\diamondsuit_{\et\ast}\Oone$ vanishes. This shows the statement for the middle row. The other rows are completely analogous.
	
	 The first part of (B) was \cref{c:Lemma-B}. To finish the proof of (B) it remains to prove that the map
	\[ R^1\pi_{\et\ast}^\diamond\bOx\to R^1\pi_{v\ast}^{\diamond}\bOx\]
	is an isomorphism.	We may prove this locally on $\Perf_{K,\et}$, and may therefore assume that $X(K)\neq \emptyset$. By Lemma~\ref{l:H^1(bOx)-on-XxY-insensitive-to-sec-factor}, we can then apply Lemma~\ref{l:explicit-description-of-R^1pi_{tau*}F} to get an exact sequence
	\[ 1\to H^1_{v}(Y,\bOx)\to H^1_{v}(X\times Y,\bOx)\to R^1\pi^{\diamond}_{v\ast}\bOx(Y)\to 1.\]
	It also applies for the \'etale topology, so we also get a short exact sequence
	\[ 1\to H^1_{\et}(Y,\bOx)\to H^1_{\et}(X\times Y,\bOx)\to R^1\pi^{\diamond}_{\et\ast}\bOx(Y)\to 1.\]
	The first two terms of these sequences are isomorphic via the natural maps by Proposition~\ref{p:bOx-cohom-et-vs-v}.1. Thus the third terms are isomorphic.
\end{proof}

\section{Proof of Main Theorem}
At this point we have completed the proof of Lemma~\ref{l:describing-master-diagram-2} and of (A)--(B) of Lemma~\ref{l:describing-master-diagram-1}.

 We are left to prove Lemma~\ref{l:describing-master-diagram-1}.(C) and to explain how to deduce Proposition~\ref{p:relative-HT-for-1+m}, which is not completely formal from the diagram. Finally, we need to prove \cref{l:Pic_et-is-v-sheaf}.
 
 We can assume that $X$ is connected. Fix a base point $x\in X(\overline K)$. We can then define the universal pro-finite-\'etale cover 
 from \cite[\S3.4]{heuer-v_lb_rigid}: This is
the diamond $\wt X$ over $K$ defined as
\[\wt\pi:\wt X:=\varprojlim_{X'\to X}X'\to \Spd(K)\]
where the limit ranges over connected finite \'etale covers $(X',x')\to (X,x)$ with $x'\in X'(\overline K)$ a choice of lift of the base point $x$. This is a spatial diamond, and the canonical projection $\wt X\to X$
is a pro-finite-\'etale torsor under the \'etale fundamental group $\pi_1(X,x)$ of $X$.

We first note that we have an analogue of Corollary~\ref{c:pi_astO^+} in the inverse limit:
\begin{Lemma}\label{l:wtpi_astO^+}
	We have $\wt \pi_{\ast}\O=\O$ on $\Perf_{K,v}$, and similarly for $\Oone$, $\O^\times$, $\O^+$ and $\O^{+a}/p^k$.
\end{Lemma}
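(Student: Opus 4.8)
The plan is to write $\wt X$ as the cofiltered limit of the finite \'etale covers defining it, apply the comparison results already established (Corollaries~\ref{c:pi_astO^+} and \ref{c:PCT-for-XxY}, Proposition~\ref{p:affinoid-perfectoid-primitive-comparison}) to each of these covers, and pass to the limit. Arguing exactly as for Corollary~\ref{c:pi_astO^+}, of which this is the ``inverse limit'' analogue, it is enough to show that for every affinoid perfectoid space $Y$ over $K$ and each $F$ among $\O,1+\m,\O^\times,\O^+,\O^+/p^k$, the pullback map $F(Y)\to F(\wt X\times Y)$ along the projection $\wt X\times Y\to Y$ is an isomorphism, resp.\ an almost isomorphism when $F\in\{\O^+,\O^+/p^k\}$; indeed all of these are $v$-sheaves, so the claimed identities may be checked on affinoid perfectoid test objects. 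Fix such a $Y$, and --- as we may, the statement being $v$-local on $\Spa(K)$ --- assume $K$ algebraically closed.

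\textbf{The building blocks.} Write $\wt X=\varprojlim_{(X',x')}X'$, the limit over connected finite \'etale covers $(X',x')\to(X,x)$ with $x'\in X'(K)$. Each $X'$ is again a connected proper smooth rigid space over $K$ with a $K$-point, hence geometrically connected, so $H^0(X',\O)=K$, $\O^+(X')=\O_K$, $\O^\times(X')=K^\times$, $(1+\m)(X')=1+\m$ and $H^0_v(X',\Z/p^k)=\Z/p^k$, and the transition maps in the system act by the identity on each of these groups. Base-changing to $Y$: Proposition~\ref{p:affinoid-perfectoid-primitive-comparison} gives $\O(X'\times Y)=H^0(X',\O)\otimes_K\O(Y)=\O(Y)$ and $\O^+(X'\times Y)\aeq\O^+(Y)$, while Corollary~\ref{c:PCT-for-XxY}.1 gives $\O^+/p^k(X'\times Y)\aeq H^0_v(X',\Z/p^k)\otimes\O^+(Y)/p^k=\O^+(Y)/p^k$; all of this compatibly with the transition maps, which remain the identity.

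\textbf{Passing to the limit.} Now $\wt X\times Y=\varprojlim_{X'}(X'\times Y)$ is a cofiltered limit of qcqs spatial diamonds with finite \'etale --- hence qcqs --- transition maps. For the torsion sheaves $\O^+/p^k$ (and $\O^+/\varpi^n$) we get $H^0(\wt X\times Y,\O^+/p^k)=\varinjlim_{X'}H^0(X'\times Y,\O^+/p^k)\aeq\varinjlim_{X'}\O^+(Y)/p^k=\O^+(Y)/p^k$ by \cite[Proposition~14.9]{etale-cohomology-of-diamonds} (or Lemma~\ref{l:cohom-of-limit-topos}, whose hypothesis is verified by Lemma~\ref{l:part-2-for-fet-on-Y}). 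For $\O$ and $\O^+$ one passes to the $\varpi$-adic limit of this, using that $\O^+$ has almost $\varpi$-adically complete global sections on qcqs diamonds and that $H^1$ of each building block $X'\times Y$ is almost $\varpi$-torsion-free (part of Corollary~\ref{c:PCT-for-XxY}); equivalently, $\wt X\times Y\sim\varprojlim_{X'}X'\times Y$ is a tilde-limit --- with $|\wt X\times Y|=\varprojlim|X'\times Y|$ by \cite[Proposition~2.4.5]{ScholzeWeinstein} and dense colimit of structure presheaves over the affinoid pieces obtained by pulling back a finite affinoid cover of $X$, by the finite-\'etale case (Case~1) in the proof of Proposition~\ref{p:rigid-approxi-for-bOx} --- so that $\O(\wt X\times Y)$ is the $\varpi$-adic completion of $\varinjlim_{X'}\O(X'\times Y)=\varinjlim_{X'}\O(Y)=\O(Y)$, which is already complete. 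Hence $\O(\wt X\times Y)=\O(Y)$ and $\O^+(\wt X\times Y)\aeq\O^+(Y)$. The cases $\O^\times$ and $1+\m$ then follow formally, being sub-$v$-sheaves of $\O$: a unit on $\wt X\times Y$ and its inverse both lie in $\O(\wt X\times Y)=\O(Y)$, hence define a unit on $Y$, and likewise $(1+\m)(\wt X\times Y)=1+\m\O^+(\wt X\times Y)=1+\m\O^+(Y)=(1+\m)(Y)$.

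\textbf{Main obstacle.} The delicate point is this last passage to the limit for the non-torsion sheaves $\O$ and $\O^+$: for $\O^+/p^k$ it is immediate from \cite[Proposition~14.9]{etale-cohomology-of-diamonds}, but for $\O,\O^+$ one has to control the interaction of $H^0$ and of the vanishing of $H^1$-torsion with a $\varpi$-adic completion, which is exactly what the tilde-limit formalism and the finite-\'etale approximation results of Section~3 are for; what rescues us is that, each $X'$ being proper and connected, the colimit $\varinjlim_{X'}\O(X'\times Y)$ is the \emph{constant} system $\O(Y)$, so in the end no completion has to be carried out. Assembling the five cases and applying Corollary~\ref{c:pi_astO^+} yields the Lemma.
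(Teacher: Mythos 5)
Your argument is correct and is essentially the paper's own proof: the paper likewise computes $F(\wt X\times Y)$ as $\varinjlim_{X'}F(X'\times Y)$ using the limit results of Section 3 (packaged there as Corollary~\ref{c:H^n(bOx)-for-general-inverse-systems-of-products}) together with the product computations for each connected proper $X'$ (Corollary~\ref{c:pi_astO^+}), and then deduces the case of $\O^+$ by passing to the limit over $k$, that of $\O$ by inverting $p$, and those of $\O^\times$ and $1+\m$ as subsheaves, exactly as you do. The only point to flag is that \cite[Proposition~14.9]{etale-cohomology-of-diamonds} by itself does not apply to $\O^+/p^k$ (this sheaf is not pulled back from any finite level of the tower), but the alternative you give --- Lemma~\ref{l:cohom-of-limit-topos} with its hypothesis supplied by Lemma~\ref{l:part-2-for-fet-on-Y} after covering $X$ by finitely many affinoids --- is precisely the mechanism the paper relies on.
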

\begin{proof}
	We start with $\O^{+}/p^k$: For this we  deduce from
 \cref{p:bOx-cohom-et-vs-v}.2 and \cref{c:pi_astO^+}:
\[ \O^+/p^k(\wt X\times Y)\aeq \varinjlim \O^+/p^k(X'\times Y)\aeq \O^+/p^k(Y).\]The case of $\O$ follows by taking the limit over $k$ and inverting $p$. The cases of $\O^+$, $\O^\times$ and $\Oone$ follow as these are subsheaves.
\end{proof}
We are finally equipped to prove that the Hodge--Tate sequence for $\Oone$ is short exact:
\begin{proof}[Proof of Proposition~\ref{p:relative-HT-for-1+m}]
	We consider the morphism of logarithm long exact sequences
	\[
	\begin{tikzcd}[column sep = 0.3cm]
		{\pi^\diamondsuit_{\ast}\O} \arrow[r] &{R^1\pi^\diamondsuit_{\tau\ast}\mu_{p^\infty}} \arrow[r]           & {R^1\pi^\diamondsuit_{\tau\ast}\Oone} \arrow[r]           & {R^1\pi^\diamondsuit_{\tau\ast}\O} \arrow[r]           & {R^2\pi^\diamondsuit_{\tau\ast}\mu_{p^\infty}} \\
		{(\pi_\ast\O)^\diamondsuit} \arrow[u,"\sim"labelrotate] \arrow[r] &{(R^1\pi_{\et\ast}\mu_{p^\infty})^\diamondsuit} \arrow[u,"\sim"labelrotate] \arrow[r] & {(R^1\pi_{\et\ast}\Oone)^\diamondsuit} \arrow[u] \arrow[r] & {(R^1\pi_{\et\ast}\O)^\diamondsuit} \arrow[u] \arrow[r] & {(R^2\pi_{\et\ast}\mu_p^\infty)^\diamondsuit} \arrow[u,"\sim"labelrotate]
	\end{tikzcd}
	\]
	for $\tau$ one of $\et$ and $v$. For either topology, the first vertical arrow is an isomorphism by Corollary~\ref{c:pi_astO^+}. The second and fifth arrow are isomorphisms by Corollaries~\ref{c:base-change-for-etale-sheaves} and~\ref{c:R^npi_Fp-comparison}. 
	
	For the \'etale topology, also the fourth arrow is an isomorphism by Proposition~\ref{p:HTses-of-R^1pi-for-O}.1, and we conclude the first part by the 5-Lemma.
	
	For the $v$-topology, by splicing diagram \eqref{eq:master-dg-2} into short exact sequences, we can still deduce from Proposition~\ref{p:HTses-of-R^1pi-for-O}.2 that there is a left-exact sequence
	\[ 0\to (R^1\pi_{\et\ast}\Oone)^\diamondsuit\to R^1\pi^\diamondsuit_{v\ast}\Oone\xrightarrow{\HT\log}  H^0(X,\wt\Omega^1_X)\otimes_K\G_a.\]

	We are left to prove right-exactness. For this it suffices to prove that the map
	\[ \log:R^1\pi^\diamondsuit_{v\ast}\Oone\to R^1\pi^\diamondsuit_{v\ast}\O\]
	is surjective. To show this, we first assume that $K$ is algebraically closed, then we can argue like in \cite[\S3.5]{heuer-v_lb_rigid} (see the discussion surrounding \cite[diagram (10)]{heuer-v_lb_rigid}): For any affinoid perfectoid $Y$, consider the pro-finite-\'etale Galois cover
	\[\wt X\times Y\to X\times Y\]
	with group $G:=\pi_1(X,x)$. 
	By Lemma~\ref{l:wtpi_astO^+}, we have $H^0_v(\wt X\times Y,\O^+)=\O^+(Y)$.
	The Cartan--Leray sequence thus combines with the logarithm to a commutative diagram:
	\begin{equation}\label{eq:diag-HTlog-right-exact}
	\begin{tikzcd}[column sep = 0.2cm]
	\Hom_{\cts}(G,\Oone(Y)) \arrow[r] \arrow[d,"\log"] & H^1_v(X\times Y,\Oone) \arrow[d,"\log"] \\
	\Hom_{\cts}(G,\O(Y)) \arrow[r]        & H^1_v(X\times Y,\O).
	\end{tikzcd}
	\end{equation}
	We aim to see that the right vertical map  becomes surjective upon sheafication in $Y$. The left morphism becomes surjective since $\log:\Oone\to \O$ is and since the maximal torsionfree abelian pro-$p$-quotient of $G=\pi_1(X,x)$ is a finite free $\Z_p$-module \cite[Corollary~3.12]{heuer-v_lb_rigid}. It thus suffices to see that the bottom map is surjective: By \cref{p:affinoid-perfectoid-primitive-comparison}.2.(ii) we have  $H^1_v(X\times Y,\O)=H^1_v(X,\O)\otimes \O(Y)$. Since $\Hom_{\cts}(G,\O(Y))= \Hom_{\cts}(G,K)\otimes_K \O(Y)$, this map is $-\otimes_K\O(Y)$ applied to the same map in the case of $Y=\Spa(K)$,
	\[ \Hom_{\cts}(\pi_1(X,x),K)\to H^1_v(X,\O),\]
	which is an isomorphism since $H^1(\wt X,\O)=0$ by \cite[Proposition~4.9]{heuer-v_lb_rigid}.
	
	Returning to the case of general perfectoid $K$, consider the inverse system of finite sub-extensions $K\subseteq L\subseteq C$, then there is base-change morphism of logarithm exact sequences
	\[\begin{tikzcd}
		{H^1_v(X\times Y_L,\Oone)} \arrow[r,"\log"] \arrow[d] & {H^1_v(X\times Y_L,\O)} \arrow[r,"\partial_L"] \arrow[d] & {H^2_v(X\times Y_L,\mu_{p^\infty})} \arrow[d] \\
		{H^1_v(X\times Y_C,\Oone)} \arrow[r,"\log"] & {H^1_v(X\times Y_C,\O)} \arrow[r] & {H^2_v(X\times Y_C,\mu_{p^\infty})}
	\end{tikzcd}\]
	Let $x$ be an element in the middle of the top row, then by the algebraically closed case, the image of $x$ in the bottom row can be lifted along $\log$ after passing to an \'etale cover of $Y_C$. Using that $Y_{C,\etqcqs}=2\text{-}\varinjlim Y_{L,\etqcqs}$, we can assume that this \'etale cover comes via pullback from $Y_L$. Replacing $Y_L$ by this cover, we see that the image of $\partial_L(x)$ in the bottom row vanishes. But the rightmost vertical map becomes an isomorphism in the colimit over $L$ by \cite[Proposition~14.9]{etale-cohomology-of-diamonds}. Hence $\partial_L(x)$ vanishes for $L$ large enough, and we find the desired lift on the \'etale cover $Y_L\to Y$.
\end{proof}
Finally, we need to see that $(R^2\pi_{\et\ast}\Oone)^\diamond\to R^2\pi^\diamond_{v\ast}\Oone$ is injective:
\begin{proof}[Proof of Lemma~\ref{l:describing-master-diagram-1}.(C)]
	We argue as in the last proof, but in one degree higher: Let 
	\begin{align*}
	C_1&:=\coker(\log\colon R^1\pi_{\et\ast}\Oone\to R^1\pi_{\et\ast}\O)^\diamond,\\ C_2&:=\coker(\log\colon R^1\pi^\diamond_{v\ast}\Oone\to R^1\pi^\diamond_{v\ast}\O).
	\end{align*}
	By Propositions~\ref{p:relative-HT-for-1+m} and \ref{p:HTses-of-R^1pi-for-O}, these fit into a commutative diagram
	\[\begin{tikzcd}
		R^1\pi^\diamond_{v\ast}\Oone \arrow[r] & R^1\pi^\diamond_{v\ast}\O \arrow[r] & C_2 \\
		(R^1\pi_{\et\ast}\Oone)^\diamondsuit \arrow[u] \arrow[r] & (R^1\pi_{\et\ast}\O)^\diamondsuit \arrow[u] \arrow[r] & C_1, \arrow[u]
	\end{tikzcd}\]
	in which the first two vertical maps both have cokernel $\wtOm_X^1\otimes \G_a$. As the second vertical map is injective, this shows that the natural map $C_1\to C_2$ is an isomorphism.
	Continuing the outer diagram in \eqref{eq:master-dg-2} further to the right,  these terms fit into a long exact sequence
		\[
	\begin{tikzcd}[column sep = 0.4cm]
		0\arrow[r] &{C_2} \arrow[r] &{R^2\pi^\diamondsuit_{v\ast}\mu_{p^\infty}} \arrow[r]           & {R^2\pi^\diamondsuit_{v\ast}\Oone} \arrow[r]           & {R^2\pi^\diamondsuit_{v\ast}\O} \arrow[r]           & {R^3\pi^\diamondsuit_{v\ast}\mu_{p^\infty}} \\
	0\arrow[r] &{C_1} \arrow[u,"\sim"labelrotate] \arrow[r] &{(R^2\pi_{\et\ast}\mu_{p^\infty})^\diamondsuit} \arrow[u,"\sim"labelrotate] \arrow[r] & {(R^2\pi_{\et\ast}\Oone)^\diamondsuit} \arrow[u] \arrow[r] & {(R^2\pi_{\et\ast}\O)^\diamondsuit} \arrow[u,hook] \arrow[r] & {(R^3\pi_{\et\ast}\mu_{p^\infty})^\diamondsuit} \arrow[u,"\sim"labelrotate]
	\end{tikzcd}
	\]
	in which the first, second and last vertical arrows are isomorphisms by \cref{c:part-E-of-master-lemma-2}. The fourth arrow is injective by \cref{p:HTses-of-R^1pi-for-O-s4}. It follows that the middle arrow is injective.
\end{proof}

This finishes the proof of Lemmas~\ref{l:describing-master-diagram-1} and~\ref{l:describing-master-diagram-2}. As explained in \cref{s:outline}, this in turn completes the proof parts 1 and 2 of  Theorem~\ref{t:representability-of-v-Picard-functor}.

To get the partial splitting in part 3, we first introduce some notation: Let \[\mathcal A:=H^0(X,\wt\Omega^1_X)\otimes_K\G_a.\] We now use that by definition of $\alpha$, the exponential $\exp:p^\alpha\O^+\to \G_m$ defines a partial inverse to $\log$ on the subspace $1+p^\alpha \O^+\subseteq \G_m$. Moreover, for varying affinoid perfectoid $Y$, the natural maps
$H^1_{v}(X,\O^+)\otimes_{K^+} \O^+(Y)\to H^1_v(X\times Y,\O^+)$
induce a morphism
\[H^1_v(X,\O^+)\otimes p^\alpha\G_a^+\to R^1\pi^\diamondsuit_{v\ast}p^\alpha\O^+.\] Let $\mathcal A^+\subseteq \mathcal A$ be its image under  $R^1\pi^\diamondsuit_{v\ast}p^\alpha\O^+\to R^1\pi^\diamondsuit_{v\ast}\O\to \mathcal A$, then
these combine to a commutative diagram:
\[\begin{tikzcd}
	0 \arrow[r]& {\uPic^\diamondsuit_{X,\et}} \arrow[r] \arrow[r] & {\uPic^\diamondsuit_{X,v}} \arrow[r,"\HT\log"] & \mathcal A \\
	0 \arrow[r] & {H^1_{\et}(X,\O^+)\otimes p^\alpha\G_a^+} \arrow[r] \arrow[u] & {H^1_v(X,\O^+)\otimes p^\alpha\G_a^+} \arrow[r,"\HT^+"] \arrow[u,"\exp"'] & \mathcal A^+\arrow[u]
\end{tikzcd}\]
Since the image of $H^1_v(X,\O^+)\to H^1_v(X,\O)$ is an almost finite free $K^+$-module, we can find a splitting
 $s:H^0(X,\wtOm)\to H^1_v(X,\O)$ that induces a splitting of $\HT^+$, and thus of $\HT\log$.
 
The part of 2 about tangent spaces also follows from the bottom row of the diagram.

This finishes the proof of the Diamantine Picard Comparison \cref{t:representability-of-v-Picard-functor}.
\qed

\medskip

As usual, Theorem~\ref{t:representability-of-v-Picard-functor} in fact yields a precise description of the Picard group:

\begin{Corollary}\label{c:explicit-description-of-Pic}
	Let $Y$ be a perfectoid space over $K$ and assume that the rigid Picard functor $\uP_{X,\et}$ is represented by an adic space $G$. Then any $x\in X(K)$ defines an isomorphism
	\[ \Pic_{\et}(X\times Y)=\Pic_{\et}(Y)\times G(Y).\]
\end{Corollary}
\begin{proof}
	This follows from Theorem~\ref{t:representability-of-v-Picard-functor} and Lemma~\ref{l:explicit-description-of-R^1pi_{tau*}F} which applies by \cref{c:pi_astO^+}.
\end{proof}

\begin{proof}[Proof of \cref{l:Pic_et-is-v-sheaf}]
	\begin{enumerate}[leftmargin=*]
		\item 
		By Theorem~\ref{t:representability-of-v-Picard-functor}.1-2, the sheaf $(\uP_{X,\et})^\diamondsuit$ is the kernel of a morphism of $v$-sheaves on $\Perf_K$, hence it is itself a $v$-sheaf.
		\item Let us for simplicity write $\uP_{X,\et}^\diamond$ for the Picard functor defined on all of $\LSD_{K,\et}$, and similarly for the $v$-topology.
		We claim that for rigid $Y$, the natural sequence
		\[ 1\to \uP^\diamond_{X,\et}(Y)\to \uP^\diamond_{X,v}(Y)\xrightarrow{\HT\log}H^0(X,\wt\Omega^1_X)\otimes_K\O(Y)\]
		is still left-exact. 
		It then follows that $\uP^\diamond_{X,\et}$ is still the kernel of $\HT\log$ on rigid spaces. But $\HT\log$ is a morphism of $v$-sheaves, and thus its kernel is a $v$-sheaf.
		
		To see that the sequence is left-exact, we study the following commutative diagram:
		\[\begin{tikzcd}[column sep = 0.3cm]
			1 \arrow[r] & {H^1_{\et}(Y,\O^\times)} \arrow[r] \arrow[d] & {H^1_{\et}(X\times Y,\O^\times)} \arrow[r] \arrow[d] & {\uP^\diamond_{X,\et}(Y)} \arrow[r] \arrow[d] & 1 \\
			1 \arrow[r] & {H^1_{v}(Y,\O^\times)} \arrow[d, "\HT\log"]  \arrow[r]& {H^1_v(X\times Y,\O^\times)} \arrow[r] \arrow[d, "\HT\log"] & {\uP^\diamond_{X,v}(Y)} \arrow[r] \arrow[d, "\HT\log"] & 1 \\
			0 \arrow[r] & {H^0(Y,\wtOm^1_Y)} \arrow[r] & {H^0(X\times Y,,\wtOm^1_{X\times Y})}\arrow[r]  & {H^0(X,\wtOm^1_X)\otimes \O(Y)} \arrow[r] & 0
		\end{tikzcd}\]
		Here the first two columns are the left exact Hodge--Tate logarithm sequences \cite[Theorem~1.3]{heuer-v_lb_rigid} associated to the rigid spaces $Y$ and $X\times Y$, respectively. The first two rows are the exact sequences from Lemma~\ref{l:explicit-description-of-R^1pi_{tau*}F}. The bottom row is also exact, since
		\[\Omega^1(X\times Y)=\big(\Omega^1(Y)\otimes_K\O(X)\big)\oplus \big(\Omega^1(X)\otimes_K \O(Y)\big)\]
		and $\O(X)=K$. The leftmost $\HT\log$ in the diagram becomes surjective after \'etale sheafification in $Y$. It follows from a diagram chase that also the third column is exact.
		\item Clear from Theorem~\ref{t:representability-of-v-Picard-functor}.1.
		\item By part 2, if $Y'\to Y$ is a pro-\'etale perfectoid cover of a rigid space, we have descent for \'etale line bundles along $X\times Y'\to X\times Y$. One also has $v$-descent for maps into $G$.
		\item When $\uPic_{X,v}^\diamond$ is representable by a rigid group, then the short exact sequence \Cref{seq:Picv-ses} in \Cref{t:representability-of-v-Picard-functor}.2 expresses $\uPic_{X,\et}^\diamond$ as the kernel of a morphism of rigid groups, hence $\uPic_{X,\et}^\diamond$ is itself represented by a rigid group.
		
		To see the converse, let $\mathcal A^+$ be the bounded open subgroup of $\mathcal A= H^0(X,\wt\Omega^1_X)\otimes_K\G_a$ described in \Cref{t:representability-of-v-Picard-functor}.3 and consider for any $n\in \N$ the short exact sequence
		\[0\to \uP_{X,\et}\to \uP_{X,v}^{(n)}\to p^{-n}\mathcal A^+\to 0 \]
		defined by the fibre of \eqref{seq:Picv-ses} over the open subgroup $p^{-n}\mathcal A^+$.
		Then we have 
		\[ \uP_{X,v}=\cup_{n\in \N} \uP_{X,v}^{(n)},\]
		so it suffices to prove that each $\uP_{X,v}^{(n)}$ is represented by a rigid space.
		For any $n$ we have a morphism of short exact sequences of $v$-sheaves, exact in the \'etale topology
		\[ \begin{tikzcd}
			0 \arrow[r] & {\uP_{X,\et}} \arrow[d, "{[p^n]}"] \arrow[r] & {\uP_{X,v}^{(n)}} \arrow[d, "{[p^n]}"] \arrow[r] & p^{-n}\mathcal A^+ \arrow[d, "{\cdot p^n}","\sim"labelrotate] \arrow[r] & 0 \\
			0 \arrow[r] & {\uP_{X,\et}} \arrow[r] & {\uP_{X,v}^{(0)}} \arrow[r] & \mathcal A^+\arrow[r] & 0.
		\end{tikzcd}\]
		By Theorem~\ref{t:representability-of-v-Picard-functor}.3, the bottom sequence is split. In particular, the middle term is represented by a (smooth) rigid group variety if and only if the first term is. 
		
		By \cite[Lemma 15.6]{etale-cohomology-of-diamonds} any diamond that is \'etale over a rigid space comes from a rigid space, so it suffices to prove that the middle arrow is an \'etale morphism of diamonds.
		
		To see this, it suffices to prove that the left morphism is a finite \'etale morphism of diamonds: Indeed, if this is the case, then the short exact sequences express that $\uP_{X,v}^{(n)}\to \uP_{X,v}^{(0)}$ is \'etale-locally on $\mathcal A^+$ isomorphic to the finite \'etale morphism \[[p^n]\times p^n:\uP_{X,\et}\times p^{-n}\mathcal A^+\to \uP_{X,\et}\times \mathcal A^+\] defined as the product of the finite \'etale morphism $[p^n]:\uP_{X,\et}\to \uP_{X,\et}$ and the isomorphism $p^n:p^{-n}\mathcal A^+\to \mathcal A^+$. By v-descent of finite \'etale maps \cite[Proposition~10.11.(iii)]{etale-cohomology-of-diamonds}, it then follows that $\uP_{X,v}^{(n)}\to \uP_{X,v}^{(0)}$ is finite \'etale.
		
		To prove that $[p^n]:\uP_{X,\et}\to \uP_{X,\et}$ is finite \'etale, we may work $v$-locally and assume that $K$ is algebraically closed. Consider the sequence $1\to \mu_{p^n}\to \G_m \xrightarrow{p^n} \G_m\to 1$. By \Cref{c:pi_astO^+}, we have $\pi_{\et\ast}^\diamondsuit \G_m=\G_m$, hence this sequence stays exact after applying $\pi_{\et\ast}^\diamondsuit$. Using \cref{c:R^npi_Fp-comparison}, we conclude that the long exact sequence is therefore of the form
		\[ 1\to \underline{H^1_{\et}(X,\mu_{p^n})}\to \uP_{X,\et}\xrightarrow{[p^n]} \uP_{X,\et}\xrightarrow{\partial} \underline{H^2_{\et}(X,\mu_{p^n})}\to \dots.\]
		Since any morphism from a connected adic space to $\underline{H^2_{\et}(X,\mu_{p^n})}$ is constant, the zero locus of $\partial$ is given by a union of connected components of the rigid group $\uP_{X,\et}$. This shows that $[p^n]:\uP_{X,\et}\to \uP_{X,\et}$ is an $\underline{H^1_{\et}(X,\mu_{p^n})}$-torsor over its open and closed image, hence it is finite \'etale, as we wanted to see.\qedhere
	\end{enumerate}
\end{proof}

\subsection{Translation-invariant Picard functors}
If $X=A$ is a connected proper rigid group variety, i.e.\ an abeloid variety, then there is a variant of the Picard functor that is frequently used, for example by Bosch--L\"utkebohmert \cite[\S6]{BL-Degenerating-AV}: the translation-invariant Picard functor. We finish this section by noting that the Diamantine Picard Comparison Theorem easily implies a translation-invariant version. We will use this in \cite{heuer-isoclasses} to prove a uniformisation result for abeloids.
\begin{Definition}\label{d:translation-invariant-Pic}
	Let $A$ be a connected smooth proper rigid group. Denote by $\pi_1,\pi_2,m:A\times A\to A$ the two projection maps and the group operation, respectively. For any rigid or perfectoid space $Y$, we denote by $\Pic_{\et}^\tau(A\times Y)$ the kernel of the map
	\[ \pi_1^\ast+\pi_2^\ast-m^\ast:\Pic_{\et}(A\times Y)\to \Pic_{\et}(A\times A\times Y).\]
	The translation-invariant Picard functor $\uP_A^\tau$ of $A$ is defined as the kernel of the morphism
	\[ \pi_1^\ast+\pi_2^\ast-m^\ast:\uP_A\to \uP_{A\times A}.\]
	We analogously define the translation invariant diamantine Picard functor $\uP^{\diamond\tau}_{A,\et} \subseteq \uP^{\diamond}_{A,\et}$.
\end{Definition}
By duality theory of abeloids, developed by Bosch--L\"utkebohmert \cite[\S6]{BL-Degenerating-AV}, the functor  $\uP_A^\tau$ is represented by an abeloid variety $A^\vee$ that is called the dual abeloid.
 We deduce:
\begin{Corollary}
	We have $\uP^{\diamond\tau}_{A,\et}=(\uP^{\tau}_A)^\diamond$ and this functor is represented by $A^{\vee\diamond}$. In particular, for any perfectoid space $Y$ over $K$, we have
	$\Pic^\tau_{\et}(A\times Y)=\Pic(Y)\times A^\vee(Y)$.
\end{Corollary}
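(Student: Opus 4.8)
The plan is to deduce the Corollary formally from Theorem~\ref{t:representability-of-v-Picard-functor}.1, the exactness of the diamondification functor $-^\diamond$, and Bosch--L\"utkebohmert's duality theory for abeloids; no further geometric input is required. First I would note that $A\times A$ is again a connected smooth proper rigid group variety, so Theorem~\ref{t:representability-of-v-Picard-functor}.1 applies to both $A$ and $A\times A$ and provides natural isomorphisms $(\uP_A)^\diamond\isomarrow\uP^\diamond_A$ and $(\uP_{A\times A})^\diamond\isomarrow\uP^\diamond_{A\times A}$. These are induced by the base-change morphism \eqref{eq:base-change-from-rigid-to-diamond}, which is natural in the structure morphism; in particular pullback along each of $\pi_1,\pi_2,m\colon A\times A\to A$ commutes with them, so the diamondification of $\pi_1^\ast+\pi_2^\ast-m^\ast\colon\uP_A\to\uP_{A\times A}$ is identified with the map $\pi_1^\ast+\pi_2^\ast-m^\ast\colon\uP^\diamond_A\to\uP^\diamond_{A\times A}$ defining $\uP^{\diamond\tau}_A$.

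Since $-^\diamond=\iota_\ast\delta_K^{-1}$ is exact it commutes with the formation of kernels, so combining the above:
\begin{align*}
(\uP^\tau_A)^\diamond&=\ker\big((\pi_1^\ast+\pi_2^\ast-m^\ast)^\diamond\big)\\
&=\ker\big(\pi_1^\ast+\pi_2^\ast-m^\ast\colon\uP^\diamond_A\to\uP^\diamond_{A\times A}\big)=\uP^{\diamond\tau}_A.
\end{align*}
By the duality theory of abeloids \cite[\S6]{BL-Degenerating-AV}, $\uP^\tau_A$ is represented by the dual abeloid $A^\vee$, and since $\mathcal F^\diamond$ agrees with $Z^\diamond$ whenever $\mathcal F$ is represented by a rigid space $Z$, it follows that $\uP^{\diamond\tau}_A=(A^\vee)^\diamond=A^{\vee\diamond}$, i.e.\ $\uP^{\diamond\tau}_A$ is represented by $A^{\vee\diamond}$.

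For the last assertion I would, for $Y$ perfectoid, apply the Leray exact sequence of Lemma~\ref{l:explicit-description-of-R^1pi_{tau*}F} to $\pi\colon A\times Y\to Y$ with $F=\G_m$: since $A$ is connected and proper we have $\pi^\diamond_{\et\ast}\G_m=\G_m$ by Corollary~\ref{c:pi_astO^+}, so the base point $e\in A(K)$ produces a split short exact sequence $0\to\Pic(Y)\to\Pic(A\times Y)\to\uP^\diamond_A(Y)\to 0$. Restricting to the line bundles whose class in $\uP^\diamond_A(Y)$ lies in the translation-invariant subgroup $\uP^{\diamond\tau}_A(Y)$ gives a split short exact sequence $0\to\Pic(Y)\to\Pic^\tau_{\et}(A\times Y)\to\uP^{\diamond\tau}_A(Y)\to 0$, and since $\uP^{\diamond\tau}_A(Y)=A^{\vee\diamond}(Y)=A^\vee(Y)$ by the previous step, the splitting by $e$ yields the natural isomorphism $\Pic^\tau_{\et}(A\times Y)=\Pic(Y)\times A^\vee(Y)$.

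There is no serious obstacle here; the one point that deserves a line of justification is the naturality claim in the first paragraph — that \eqref{eq:base-change-from-rigid-to-diamond} is functorial in $\pi$ and hence intertwines $\pi_1^\ast,\pi_2^\ast,m^\ast$ on the rigid and diamantine Picard sheaves — which is a routine consequence of the functoriality of $\delta$, $\iota_\ast$ and the relevant pushforwards. Everything else is a formal consequence of Theorem~\ref{t:representability-of-v-Picard-functor}.1 together with the representability of $\uP^\tau_A$ by $A^\vee$.
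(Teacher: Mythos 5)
Your proposal is correct and follows essentially the same route as the paper's proof: the first claim via exactness of $-^\diamond$ applied to Theorem~\ref{t:representability-of-v-Picard-functor}.1 (so that diamondification commutes with the kernel defining the $\tau$-part), the second via Bosch--L\"utkebohmert duality $\uP^\tau_A=A^\vee$, and the last via the split Leray sequence of Lemma~\ref{l:explicit-description-of-R^1pi_{tau*}F} using the identity section and Corollary~\ref{c:pi_astO^+}. Your ``restrict the split sequence to translation-invariant classes'' step is just a rephrasing of the paper's ``compare kernels under the decompositions for $A\times Y$ and $A\times A\times Y$'', so no substantive difference.
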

\begin{proof}
	The first statement follows from Theorem~\ref{t:representability-of-v-Picard-functor}.1 by exactness of $-^\diamond$. For the last part, we use that by \Cref{c:explicit-description-of-Pic}, we have
	$\Pic_{\et}(A\times Y)=\Pic(Y)\times \uP_{A}(Y)$, and similarly for $A\times A$. We get the desired statement by comparing kernels on both sides of the  maps in Definition~\ref{d:translation-invariant-Pic}.
\end{proof}

\appendix
\section{Lemmas on complexes of Banach algebras}
Let $R$ be any ring and let $\varpi\in R$ be any element. We recall that an $R$-module $M$ is said to have bounded $\varpi$-torsion if $M[\varpi^\infty]:=\cup_{k\in \N}M[\varpi^k]$ is equal to $M[\varpi^n]$ for some $n\in \N$.

\begin{Lemma}\label{l:bounded-torsion-4-term-sequence}
	If $A\to B\to C\to D$ is an exact sequence of $R$-modules in which $A$ and $D$ are killed by $\varpi^n$ for some $n\in \N$ and $B$ has bounded $\varpi$-torsion, then $C$ has bounded $\varpi$-torsion.
\end{Lemma}
\begin{proof}
	This is an elementary diagram chase: Choose $n$ large enough such that $B[\varpi^\infty]=B[\varpi^n]$. We claim that $C[\varpi^\infty]=C[\varpi^{2n}]$. Let $x\in C[\varpi^N]$ for some $N$. Then $\varpi^nx$ lifts to an element $y$ of $B$, and $\varpi^Ny$ goes to $\varpi^{N+n}x=0$ in $C$, hence lifts to $A$. Since $A$ is killed by $\varpi^n$, this implies $\varpi^{N+n}y=0$. Since $B[\varpi^\infty]=B[\varpi^n]$, this shows $\varpi^ny=0$, hence $\varpi^{2n}x=0$.
\end{proof}

\begin{Lemma}\label{l:two-out-of-three}
	Let $C_1^\bullet\to C_2^\bullet \to C_3^\bullet$ be a short exact sequence of complexes of $R$-modules. Suppose that for each $n\in \Z$, the module $H^n(C_3^\bullet)$ has bounded $\varpi$-torsion and $H^n(C_2^\bullet)$ is killed by $\varpi^k$ for some $k\in \N$. Then $H^n(C_1^\bullet)$ has bounded $\varpi$-torsion.
\end{Lemma}
\begin{proof}
	We apply \Cref{l:bounded-torsion-4-term-sequence} to the long exact sequence of cohomologies.
\end{proof}

Let $(K,K^+)$ be any non-archimedean field. We now specialise to the setting that $R=K^+$ and $\varpi\in K^+$ is a pseudo-uniformiser.

\begin{Lemma}\label{l:completion-of-Banach-cpx}
	Let $C^\bullet$ be a bounded complex of $\varpi$-torsionfree $\varpi$-adically complete $K^+$-modules. Suppose that $C^\bullet[\tfrac{1}{\varpi}]$ is a complex of $K$-Banach modules such that $H^n(C^\bullet[\tfrac{1}{\varpi}])$ is finite-dimensional for all $n\in \Z$. Then:
	\begin{enumerate}
		\item
	 $H^n(C^\bullet)$ has bounded $\varpi$-torsion.
	 \item  For any $\varpi$-torsionfree $K^+$-module $S$, we have
	 \[H^n(C^\bullet\hotimes_{K^+} S)= H^n(C^\bullet)\hotimes_{K^+} S\]
	 where $\hotimes$ denotes the $\varpi$-adically completed tensor product.
	\end{enumerate}
\end{Lemma}
\begin{proof}
	By \cite[§II.5 Lemma 1]{MumfordAV}, there exists a perfect complex $P^\bullet$ of $K$-vector spaces and a quasi-isomorphism $f:P^\bullet\to C^\bullet[\tfrac{1}{\varpi}]$. Choosing $K^+$-lattices in each $P^n$ and rescaling if necessary, we can find a perfect complex  $P^{+,\bullet}$ of $K^+$-modules such that $P^{+,\bullet}[\tfrac{1}{\varpi}]=P^\bullet$ and such that $f$ admits a $K^+$-model \[f^+:P^{+,\bullet}\to C^\bullet.\]
	Let $L$ be the mapping cone of $f^+$, then we have a short exact sequence of $K^+$-complexes
	\[0\to C^\bullet\to L\to P^{+,\bullet}[1]\to 0.\]
	Since $f$ is a quasi-isomorphism, $L[\tfrac{1}{\varpi}]$ is an exact complex of $K$-Banach modules, so we have $H^n(L)[\tfrac{1}{\varpi}]=0$. By a standard argument, it now follows from Banach's Open Mapping Theorem that $H^n(L)$ has bounded $\varpi$-torsion for all $n\in \N$. (We learnt this argument from the proof of  \cite[Proposition 6.10]{perfectoid-spaces}. See e.g.\ \cite[Lemma A.3.1]{heuer-thesis} for a proof of the statement.)
	
	The modules $H^n(P^{+,\bullet})$ are finitely presented $K^+$-modules, hence they also have bounded $\varpi$-torsion. Therefore, \Cref{l:two-out-of-three} applies and shows part 1.
	
	For part 2, we note that $S$ is a flat $K^+$-module. The result therefore follows from part 1 by \cite[Lemma~A.3.6]{heuer-thesis}, or alternatively \cite[Proposition~A.3.1]{heuer-relative-HT}.
\end{proof}

\end{document}